\numberwithin{equation}{section}
\newtcolorbox{problema}[2][]{
                lower separated=false,
                colback=white!80!magenta,
colframe=white, fonttitle=\bfseries,
colbacktitle=white!50!gray,
coltitle=black,
enhanced,
attach boxed title to top left={xshift=0.1cm,yshift=-2mm},
title=#2,#1}
\newtheorem{mtheo}{Theorem}
\newtheorem{theorem}{Theorem}[section]
\newtheorem{lemma}[theorem]{Lemma}
\newtheorem{claim}[theorem]{Claim}
\newtheorem*{claim*}{Claim}
\newtheorem{corollary}[theorem]{Corollary}
\newtheorem{proposition}[theorem]{Proposition}
\theoremstyle{definition}
\newtheorem{remark}[theorem]{Remark}
\newtheorem*{remark*}{Remark}
\newcommand{\eqdef}{\stackrel{\scriptscriptstyle\rm def}{=}}
\newcommand{\eps}{\varepsilon}
\newcommand{\barf}[1]{{\bar{f}^{#1}_0}}
\def\sG{\mathscr{G}}
\def\c{{\rm c}}
\def\s{{\rm s}}
\def\u{{\rm u}}
\def\cs{{\rm cs}}
\def\cu{{\rm cu}}
\def\bN{\mathbb{N}}
\def\bZ{\mathbb{Z}}
\def\bR{\mathbb{R}}
\def\cCC{\EuScript{C}}
\def\cN{\EuScript{N}}
\def\cR{\EuScript{R}}
\def\cS{\EuScript{S}}
\def\cW{\mathscr{W}}
\def\cM{\EuScript{M}}
\def\c{{\rm c}}
\def\s{{\rm s}}
\def\u{{\rm u}}
\def\cs{{\rm cs}}
\def\cu{{\rm cu}}
\def\cF{\mathscr{F}}
\def\uv{\underline v}
\def\uw{\underline w}
\def\fm{\mathfrak m}
\DeclareMathSymbol{\varnothing}{\mathord}{AMSb}{"3F}
\renewcommand{\emptyset}{\varnothing}
\title[Equilibrium states by synchronization]{Equilibrium states by synchronization, symbolic extensions, and factors}
\author[K.~Gelfert]{Katrin Gelfert}
\address{Instituto de Matem\'atica Universidade Federal do Rio de Janeiro, Av. Athos da Silveira Ramos 149, Cidade Universit\'aria - Ilha do Fund\~ao, Rio de Janeiro 21945-909,  Brazil}
\email{gelfert@im.ufrj.br}
\author[D.~Kwietniak]{Dominik Kwietniak}
\address{Faculty of Mathematics and Computer Science, Jagiellonian University in Krak\'ow, ul. \L o\-jasiewicza 6, 30-348 Krak\'ow, Poland}\email{dominik.kwietniak@uj.edu.pl}
\author[Y.~Lima]{Yuri Lima}
\address{Instituto de Matemática e Estatística, Universidade de São Paulo, Rua do Matão, 1010, Cidade Universitária, 05508-090. São Paulo -- SP, Brazil}
\email{yurilima@gmail.com}
\begin{document}
\begin{abstract}
We combine the two classical topological concepts,  time-preserving topological factors and synchronizing time-changes of a continuous flow, and explore some of their thermodynamic consequences. Particular focus is put on equilibrium states and, in particular, measures of maximal entropy, with emphasis on geodesic flows on rank-one surfaces of nonpositive curvature and their time-preserving expansive topological factors for which we investigate the scaled geometric potentials. 
\end{abstract}

\maketitle

\section{Introduction}

We investigate the geodesic flow $G=(g^t)_{t\in\mathbb{R}}$ of a compact rank-one surface $M$ with nonpositive curvature, concentrating on ergodic measures of $G$ by means of symbolic extensions, topological factors, and time-changes. Understanding thermodynamic properties of $G$ and, in particular, studying its measures of maximal entropy and equilibrium states is a classical yet challenging topic of research. Due to their dynamic relevance, the equilibrium states of the \emph{scaled geometric potential} $q\varphi^{(\u)}$, $q\in\bR$, are of special interest, where 
\begin{equation}\label{phidef}
	\varphi^{(\u)}\colon T^1M\to\bR,\quad
	\varphi^{(\u)}(v)
	\eqdef -\frac{d}{dt}\Big|_{t=0}\log\,\lVert dg^t|_{F^\u_v}\rVert
	= -\lim_{t\to0}\frac1t\log\,\lVert dg^t|_{F^\u_v}\rVert,
\end{equation}
and $F^\u$ denotes a continuous one-dimensional invariant subbundle of $TT^1M$ on which the derivative of the geodesic flow is ``noncontracting'' (see Section \ref{sec:geodesic} for details).

Our main goal is to provide alternative approaches to the following result in \cite{BurCliFisTho:18}. 

\begin{mtheo}\label{theoremgeodesic}
	Let $M$ be a connected compact rank-one Riemannian surface of nonpositive sectional curvature, and let $G=(g^t)_{t\in\mathbb{R}}$ be its geodesic flow on the unit tangent bundle. For every $q<1$, the potential $q\varphi^{(\u)}$ has a unique equilibrium state (with respect to $G$). Moreover, this measure is supported on the set of regular vectors. 
\end{mtheo}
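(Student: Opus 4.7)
The plan is to combine the three techniques highlighted in the abstract---time-preserving topological factors, synchronizing time-changes, and symbolic extensions---to reduce the study of equilibrium states for $q\varphi^{(\u)}$ on $(T^1M,G)$ to a thermodynamic problem on an auxiliary system with uniform hyperbolicity, bypassing the non-uniform specification machinery used in \cite{BurCliFisTho:18}.

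As a preliminary I would establish a pressure gap
\[
P_{G|_{\mathrm{Sing}}}(q\varphi^{(\u)}) < P_G(q\varphi^{(\u)}) \quad \text{for every } q<1,
\]
where $\mathrm{Sing}\subset T^1M$ is the closed $G$-invariant set of singular (flat) vectors. Since $\varphi^{(\u)}$ vanishes identically on $\mathrm{Sing}$ and is nonnegative everywhere, the left side reduces to $h_{\mathrm{top}}(G|_{\mathrm{Sing}})$, which in the surface case is zero (geodesics inside flat strips have no exponential orbit growth), whereas $h_{\mathrm{top}}(G)=P_G(0)\le P_G(q\varphi^{(\u)})$ is strictly positive. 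Via the variational principle and ergodic decomposition this gap forces every equilibrium state of $q\varphi^{(\u)}$ to be supported on $\mathrm{Reg}=T^1M\setminus\mathrm{Sing}$, which already yields the last assertion of the theorem.

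For existence and uniqueness I would produce a synchronizing time-change $\widetilde G$ of $G$, given by a positive continuous scaling $\tau\colon T^1M\to(0,\infty)$, chosen so that $\widetilde G$ becomes expansive on (a neighborhood of) $\mathrm{Reg}$. Abramov's formula identifies equilibrium states for a potential $\psi$ under $\widetilde G$ with equilibrium states for $\psi-P(\psi)\tau$ under $G$, so a proper choice of $\tau$ converts $q\varphi^{(\u)}$ into a H\"older potential on $\widetilde G$. I would then pass to a time-preserving topological factor $\pi\colon T^1M\to Y$ of $\widetilde G$ that collapses flat strips to points, on which the quotient flow is genuinely expansive, and invoke classical uniqueness of equilibrium states for expansive flows with H\"older (or Bowen) potentials to obtain a unique state $\bar\mu$ on $Y$; alternatively, one can build a symbolic extension of $\widetilde G|_{\mathrm{Reg}}$ of Lima--Sarig type and transport the unique shift equilibrium state back. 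The pressure gap of the previous step rules out exceptional fibers, so $\bar\mu$ lifts to a unique equilibrium state $\mu$ of $q\varphi^{(\u)}$ on $(T^1M,G)$.

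The main obstacle is the joint calibration of $\tau$, $\pi$ (or the coding), and the potential: one must simultaneously arrange that $\widetilde G$ admits a time-preserving expansive factor (or a finite-to-one symbolic extension) meaningfully defined on $\mathrm{Reg}$, that the transformed potential retains the H\"older/Bowen regularity needed for uniqueness on the factor, and that no pressure is lost on the fibers over the collapsed singular image. A secondary concern is making the construction uniform in $q<1$, which matters for analyticity and phase-transition questions about the pressure function $q\mapsto P_G(q\varphi^{(\u)})$.
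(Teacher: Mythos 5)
Your overall architecture (pressure gap $\Rightarrow$ support on the regular set; synchronization $\Rightarrow$ reduce to a measure of maximal entropy; symbolic extension or expansive factor $\Rightarrow$ uniqueness) matches the paper's strategy, but several load-bearing steps are either wrong or missing. First, $\varphi^{(\u)}\le 0$, not $\ge 0$ (the norm $\lVert dg^t|_{F^\u}\rVert$ is noncontracting), so for $0<q<1$ you cannot conclude $P_G(q\varphi^{(\u)})\ge h_{\rm top}(G)$ by monotonicity; the gap $P_{\rm top}(G,q\varphi^{(\u)})>0=\sup_\mu\int q\varphi^{(\u)}\,d\mu$ for all $q<1$ is a genuine result (the paper quotes \cite[Lemma 4]{GelSch:14}), not a two-line observation. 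Second, a time-change cannot make a non-expansive flow ``expansive on a neighborhood of $\mathrm{Reg}$'': expansivity is invariant under time-changes in both directions, so the flat strips obstruct it before and after reparametrization. The route you propose through the expansive factor (collapse flat strips, then ``invoke classical uniqueness for expansive flows with Bowen potentials'') is exactly the route the paper could \emph{not} close: Franco/Bowen-type uniqueness needs specification, i.e.\ topological mixing plus local product structure, and neither mixing nor local product structure is known to survive the synchronizing time-change. The paper only obtains uniqueness for the time-changed factor $\Psi_{\widetilde r}$ (Corollary~\ref{corexpansive}) as a \emph{consequence} of the symbolic proof, not as an input to it.

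The symbolic branch of your proposal is the one that works, but it is underspecified at the two points where the real content lies. (i) A Lima--Sarig-type coding of a nonuniformly hyperbolic flow decomposes into countably many irreducible components, and two hypothetical equilibrium states could a priori be carried by different components, each with its own unique MME; to put both on a \emph{single} irreducible TMF one must show that all hyperbolic ergodic measures are homoclinically related (the paper's Corollary~\ref{corhomrelmeas}, resting on minimality of the un-/stable foliations and \cite{LimPol:}). Your proposal never addresses this, and without it the contradiction argument collapses. (ii) The order of operations matters for regularity: $q\varphi^{(\u)}$ is only continuous on $T^1M$, but its lift to the symbolic space is H\"older with respect to the Bowen--Walters metric; one then synchronizes \emph{on the symbolic side}, producing a time-changed TMF whose new roof function $R'$ is H\"older, so that uniqueness of the MME follows from Buzzi--Sarig for the countable Markov shift. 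Saying that ``a proper choice of $\tau$ converts $q\varphi^{(\u)}$ into a H\"older potential'' conflates these steps; the synchronization converts the equilibrium problem into an MME problem (no potential regularity needed downstream), and the H\"older regularity that is actually used is that of the lifted potential and hence of $R'$. Finally, note that hyperbolicity of the potential (Lemma~\ref{lem:equi}) is what guarantees the putative equilibrium states have positive entropy and are therefore hyperbolic measures amenable to the coding --- this is the same pressure gap you state at the outset, so your step one is necessary not only for the support claim but for the entire uniqueness argument.
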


Since \eqref{phidef} is continuous and $G$ is a $C^\infty$ flow, \cite{Newhouse-Entropy} guarantees the existence of equilibrium states for $q\varphi^{(\u)}$ for any $q\in\bR$. The study of measures of maximal entropy (and, more generally, equilibrium states) for geodesic flows has recently gained much attention.  When $G$ is an Anosov flow%
\footnote{For a compact rank-one nonpositively curved surface $M$ its geodesic flow is Anosov if, and only if, every geodesic curve contains at least one point with negative curvature \cite{Ebe:73I-II}.},
  uniqueness follows from \cite{BowRue:75}. The non-Anosov case provides fundamental examples of nonuniformly hyperbolic dynamical systems.  Among the first results for non-Anosov flows, we mention the uniqueness of the measure of maximal entropy in \cite{Kni:98}.   
  
  The original proof of Theorem \ref{theoremgeodesic} in \cite{BurCliFisTho:18} uses a non-uniform version of Bowen's criterion \cite{Bow:75} developed in the discrete case in \cite{CliTho:12} (see \cite{CliTho:21} for later developments).   Our approach to the proof of Theorem \ref{theoremgeodesic} uses {\em Parry’s synchronization technique} \cite{Par:86}. This technique was originally introduced to study hyperbolic attractors and, to the best of our knowledge, it has so far been explored only in uniformly hyperbolic settings (see, for example, \cite{Sam:14}). In essence, synchronization is a specific time-change (or reparametrization) that induces a homeomorphism between the spaces of invariant measures of the flow and its time-changed counterpart and sends a designated equilibrium state to the measure of maximal entropy 
of the time-changed flow. 

Additionally to the context described above, we also develop techniques for studying equilibrium states by synchronization for a special class of expansive flows that emerged in our investigations. We provide more details of this context in Section \ref{sect-expansive-factor}.

The first main result of the article, Theorem \ref{thepro:2}, describes synchronization for general continuous flows and a particular class of potentials. To state it, we need some notation. Given a flow $\Phi=(\phi^s)_{s\in\bR}$
and a function $f$, let $P_{\rm top}(\Phi,f)$ denote the \emph{topological pressure of $f$ with respect to $\Phi$}, see the definition in Section \ref{sec:entpress}. To each positive continuous function $r$, we define
in Section \ref{sec:timcha} the time-changed flow $\Phi_r$. In particular, if $\mu$ is a $\Phi$-invariant measure, then we can define a $\Phi_r$-invariant measure $\mu_r$ by the equation \eqref{eq:measuretime}. If $r$ is a {\em synchronization} for $f$, defined by equation (\ref{eq:T-for-Pnew}) below, then we relate equilibrium state for $f$ with respect to $\Phi$ with measures of maximal entropy for $\Phi_r$.

\begin{mtheo}[Synchronization of equilibrium states]\label{thepro:2}
	Let $\Phi\colon Y\times\bR\to Y$ be a continuous fixed-point free flow on a compact metric space $Y$.
	Suppose $f\colon Y\to\bR$ is a continuous function for which there exists $t>0$ such that 
\begin{equation}\label{eq:T-for-Pnew}
  r\eqdef P_{\rm top}(\Phi,f)-\max_{y\in Y}\frac1t \int_0^tf(\phi^s(y))\,ds>0.
\end{equation}
Let $\Phi_r$ be the time-changed flow of $\Phi$ with respect to $r$. Then an ergodic measure $\mu$ is an equilibrium state for $f$ with respect to $\Phi$ if, and only if, the measure $\mu_r$ defined by equality
\begin{equation}\label{eq:measuretime}
\frac{d\mu_r}{d\mu} \eqdef \frac{r}{\int r\,d\mu}
\end{equation}
is an ergodic measure of maximal entropy for $\Phi_{r}$. In particular, $f$ has a unique equilibrium state with respect to $\Phi$ if, and only if, the flow $\Phi_{r}$ has a unique measure of maximal entropy.
\end{mtheo}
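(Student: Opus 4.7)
The plan is to reduce the statement to a short consequence of Abramov's formulas applied to the time-changed flow $\Phi_r$. I rely on three standard properties that I take as established in Section~\ref{sec:timcha}: the map $\mu\mapsto\mu_r$ defined by \eqref{eq:measuretime} is a bijection between $\Phi$-invariant and $\Phi_r$-invariant Borel probability measures that preserves ergodicity; for any continuous $g$ one has the change-of-variables formula $\int g\,d\mu_r=\int g\,r\,d\mu/\int r\,d\mu$; and Abramov's entropy formula $h(\mu_r,\Phi_r)=h(\mu,\Phi)/\int r\,d\mu$ holds for every ergodic $\Phi$-invariant $\mu$.

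The decisive step is to use the explicit form of $r$ to rewrite $\int r\,d\mu$. By $\Phi$-invariance and Fubini,
\[
\int \frac{1}{t}\int_0^t f(\phi^s y)\,ds\,d\mu(y)
=\frac{1}{t}\int_0^t\!\!\int f\,d\mu\,ds
=\int f\,d\mu,
\]
so $\int r\,d\mu=P_{\rm top}(\Phi,f)-\int f\,d\mu$ for every $\Phi$-invariant $\mu$. Inserting this into Abramov yields the key identity
\[
h(\mu_r,\Phi_r)=\frac{h(\mu,\Phi)}{P_{\rm top}(\Phi,f)-\int f\,d\mu}.
\]
The variational principle for $(\Phi,f)$ is precisely $h(\mu,\Phi)\le P_{\rm top}(\Phi,f)-\int f\,d\mu$, so the right-hand side is always $\le 1$, with equality if and only if $\mu$ is an equilibrium state for $f$. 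Passing to the supremum over $\mu$ and using the bijection $\mu\leftrightarrow\mu_r$ gives $h_{\rm top}(\Phi_r)\le 1$.

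To upgrade this to $h_{\rm top}(\Phi_r)=1$, I would approximate: choose $\Phi$-invariant $\mu_n$ with $h(\mu_n,\Phi)+\int f\,d\mu_n\to P_{\rm top}(\Phi,f)$, which exist by the variational principle. Then $h(\mu_n,\Phi)/\int r\,d\mu_n\to 1$ provided the denominators stay bounded away from $0$; this is where the hypothesis $r>0$ enters, together with continuity of $r$ (which follows from the continuity of $f$ and dominated convergence) and compactness of $Y$, giving $\int r\,d\mu_n\ge \min_Y r>0$. Once $h_{\rm top}(\Phi_r)=1$ is in hand, both directions of the equivalence read off the displayed identity: $\mu_r$ is a measure of maximal entropy for $\Phi_r$ iff $h(\mu_r,\Phi_r)=1$ iff $h(\mu,\Phi)+\int f\,d\mu=P_{\rm top}(\Phi,f)$, i.e.~iff $\mu$ is an equilibrium state. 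Ergodicity and the uniqueness statement transfer through the bijection. I do not expect any real obstacle beyond having Abramov's formulas cleanly in hand; the only content specific to this theorem is the Fubini reduction of $\int r\,d\mu$ and the soft use of $\min_Y r>0$ to justify the lower bound on $h_{\rm top}(\Phi_r)$.
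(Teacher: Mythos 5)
Your argument is correct and is essentially the paper's own proof in different packaging: both rest on Abramov's formula $h_{\mu_r}(\Phi_r)\int r\,d\mu=h_\mu(\Phi)$, the Fubini identity $\int r\,d\mu=P_{\rm top}(\Phi,f)-\int f\,d\mu$, the ergodicity-preserving bijection $\mu\mapsto\mu_r$, and the variational principle. The paper organizes the conclusion as the two pressure identities $P_{\rm top}(\Phi,-r)=0=P_{\rm top}(\Phi_r,-1)$, with a sandwich by $\min r$ and $\max r$ playing exactly the role of your approximation argument for $h_{\rm top}(\Phi_r)=1$, while you read everything off the normalized entropy ratio being $\le 1$ with equality precisely at equilibrium states.
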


We apply Theorem \ref{thepro:2} for nonuniformly hyperbolic geodesic flows $\Phi=G$ as discussed in the beginning
of this introduction, observing that condition \eqref{eq:T-for-Pnew} is satisfied for the potential $f=q\varphi^{(\u)}$
if, and only if, $q<1$. In other terms, $f=q\varphi^{(\u)}$ is a \emph{hyperbolic potential}%
\footnote{Without further structure, the term \emph{hyperbolic} in the general setting of Theorem \ref{thepro:2}  \emph{a priori} does not make sense. It is motivated by \cite{InoRiv:12}. It will be justified in Lemma \ref{lem:equi}.}; 
see Section \ref{ssechyppot} for a detailed discussion. 

Once Theorem \ref{thepro:2} is applied, we are then interested in analyzing when the time-changed flow has a unique measure of maximal entropy. As a matter of fact, we do not apply Theorem \ref{thepro:2} directly to $\Phi$,
but rather to either a (symbolic) extension or a (topological) factor of $\Phi$. More precisely, we pursue the following two lines of investigation:
\begin{enumerate}[leftmargin=0.8cm, label=(A\arabic*)]
\item\label{aa1} We consider a symbolic extension $\Theta$ of the geodesic flow $G$. The function
$f$ lifts to a function $\widehat f$ on the symbolic space. For some appropriately
chosen function $\widehat r$, we consider the time-change $\Theta_{\widehat r}$ and then prove that this latter system
has a unique measure of maximal entropy. The conclusion is that $\widehat f$ has a unique equilibrium state,
and consequently (using properties of the symbolic coding) $f$ itself also has a unique equilibrium state.
There is a similarity of this argument with the one developed in \cite{LimPol:}, where the authors provide
a proof of Theorem \ref{theoremgeodesic} using symbolic dynamics. The difference, which we want to emphasize,
is that our proof also uses synchronization, thus reducing the argument to analyzing the uniqueness
of measures of maximal entropy, a problem that is usually easier to solve.
This is done in Section \ref{ssectime-preserving-extension}. 
\item\label{aa2} Starting with a (non-necessarily expansive) geodesic flow $G$, we consider a topological factor $\Psi$ of $G$ that is expansive by ``collapsing'' each flat strip%
\footnote{A \emph{flat strip} is a totally geodesic and isometric immersion of the product of $\bR$ with an interval.}
 to a single orbit. Let $X$ be the resulting three-dimensional manifold and $\pi\colon T^1M\to X$ the associated topological factor map. The existence of this factor is demonstrated in \cite{GelRug:19}. 
Recall that flat strips are the ``only obstructions to expansivity'' for the geodesic flow $G$.
Passing to the factor sends measures with positive entropy (in particular the equilibrium states for $q\varphi^{(\u)}$,
$q<1$) to measures with positive entropy. Then, for some appropriately defined function $\widetilde r$,
we make a time-change $\Psi_{\widetilde r}$ of $\Psi$ and obtain the following commutative diagram:
\begin{equation}\label{eqdiagram}
\begin{tikzcd}
    G \arrow{rr}{\text{time-change by $ r$}} \arrow[swap]{d}{\pi}
    	& & G_{ r} \arrow{d}{\pi}\\
    \Psi \arrow{rr}{\text{time-change by $\widetilde r$}}
    	& & \Psi_{\widetilde r}&
  \end{tikzcd}
  \quad\quad
  \widetilde r\eqdef r\circ\pi^{-1}
\end{equation}
Then $q\varphi^{(\u)}$ has a unique equilibrium state for $G$ if, and only if, 
$\Psi_{\widetilde r}$ has a unique measure of maximal entropy. The discussion of this approach
is made in Section \ref{sect-expansive-factor}.
\end{enumerate}

Regarding approach \ref{aa2}, unfortunately we do not have a direct proof (not relying on approach \ref{aa1})
that $\Psi_{\widetilde r}$ has a unique measure of maximal entropy.
However, the symbolic extension technique applies to $\Psi_{\widetilde r}$ and yields the result below,
also treated in Section \ref{sect-expansive-factor}.

\begin{corollary}\label{corexpansive}
	Let $\Psi$ be the factor flow from \cite{GelRug:19} discussed above and $\pi\colon T^1M\to X$ be the associated topological factor map. Given $q<1$, consider the time-change $\Psi_{\widetilde r}$ of $\Psi$ associated to the function
\[
	\widetilde r\eqdef r\circ\pi^{-1}\colon X\to\bR_{>0},\quad
	r(v)\eqdef P_{\rm top}(G,q\varphi^{(\u)})-\int_0^1q\varphi^{(\u)}(g^s(v))\,ds.
\]	
Then $\Psi_{\widetilde r}$ has a unique measure of maximal entropy.
\end{corollary}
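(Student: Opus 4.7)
The plan is to apply Theorem~\ref{thepro:2} to the factor flow $\Psi$ equipped with the descended potential, and then invoke Theorem~\ref{theoremgeodesic}, whose proof via approach~\ref{aa1} is carried out in Section~\ref{ssectime-preserving-extension}. The point is that the symbolic extension machinery developed for $G$ is transported through $\pi$ and the synchronization theorem turns the uniqueness of equilibrium states into the desired uniqueness of the measure of maximal entropy.

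First, I would check that the data on $X$ are well-defined. Since each nontrivial $\pi$-fiber is a flat strip on which the geodesic flow acts isometrically transverse to the orbit direction, $\varphi^{(\u)}\equiv 0$ on flat strips, and hence $\varphi^{(\u)}$ descends to a continuous function $\widetilde\varphi^{(\u)}\eqdef \varphi^{(\u)}\circ\pi^{-1}$ on $X$. Combined with the pressure identity $P_{\rm top}(\Psi,q\widetilde\varphi^{(\u)})=P_{\rm top}(G,q\varphi^{(\u)})$---which follows from the factor inequality $\le$ and the reverse inequality obtained by pushing the equilibrium state of Theorem~\ref{theoremgeodesic}, supported on regular vectors where $\pi$ is injective---the continuous positive function $\widetilde r$ of the statement is precisely the synchronization function of $(\Psi,q\widetilde\varphi^{(\u)})$ in the sense of~\eqref{eq:T-for-Pnew}, its positivity being a consequence of $q<1$.

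Next, by Theorem~\ref{thepro:2} applied to $(\Psi,q\widetilde\varphi^{(\u)})$, uniqueness of the measure of maximal entropy for $\Psi_{\widetilde r}$ is equivalent to uniqueness of the equilibrium state for $q\widetilde\varphi^{(\u)}$ under $\Psi$. Theorem~\ref{theoremgeodesic} produces a unique equilibrium state $\mu$ for $q\varphi^{(\u)}$ under $G$, supported on regular vectors, and its pushforward $\pi_\ast\mu$ is an equilibrium state under $\Psi$ by the pressure identity. To rule out a second ergodic equilibrium state $\nu$ on $X$, I would lift $\nu$ along $\pi$-fibers to a $G$-invariant measure $\widehat\nu$ with $\pi_\ast\widehat\nu=\nu$ having the same entropy and the same integral of $q\varphi^{(\u)}$; then $\widehat\nu$ would be an equilibrium state for $q\varphi^{(\u)}$ under $G$, forcing $\widehat\nu=\mu$ and consequently $\nu=\pi_\ast\mu$.

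The main obstacle I foresee is the lifting step: establishing that every ergodic $\Psi$-invariant candidate equilibrium state admits an entropy-preserving lift to $G$. This reduces to checking that invariant measures supported on the union of flat strips have zero entropy, so that a positive-entropy ergodic measure on $X$ necessarily gives full mass to the set where $\pi$ is bijective, on which the lift is unique and entropy-preserving. For rank-one surfaces this vanishing on flat strips is standard, since the dynamics restricted to a flat strip is parametrized by a one-dimensional isometric factor and therefore has zero entropy.
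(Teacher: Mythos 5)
Your argument is correct, but it goes around the commutative diagram \eqref{eqdiagram} the other way from the paper. The paper synchronizes \emph{upstairs}: Theorem~\ref{theoremgeodesic} together with Corollary~\ref{coriff} yields a unique measure of maximal entropy for $G_{r}$; this measure lies in $\cN_r$, and Propositions~\ref{proGrtopolofactor} and~\ref{procorprothe:2} then push that unique MME down through the time- and entropy-preserving factor $\pi\colon G_r\to\Psi_{\widetilde r}$. You instead descend the \emph{potential} first --- using that $\varphi^{(\u)}$ vanishes on $\cS$, which contains every nontrivial fiber, so that $\widetilde\varphi^{(\u)}\eqdef\varphi^{(\u)}\circ\pi^{-1}$ is a well-defined continuous potential on $X$ with $P_{\rm top}(\Psi,q\widetilde\varphi^{(\u)})=P_{\rm top}(G,q\varphi^{(\u)})$ and with $\widetilde r$ as its synchronization --- and then apply Theorem~\ref{thepro:2} \emph{downstairs}, reducing the claim to uniqueness of the equilibrium state of $q\widetilde\varphi^{(\u)}$ for $\Psi$, which you obtain by lifting/pushing equilibrium states through $\pi$ and invoking Theorem~\ref{theoremgeodesic}. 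Both routes take Theorem~\ref{theoremgeodesic} as the essential input; yours is more self-contained in spirit (it exhibits $\Psi_{\widetilde r}$ directly as the synchronization of a hyperbolic potential on the factor) at the price of verifying the pressure identity and the equilibrium-state correspondence by hand, while the paper's proof is four lines because Propositions~\ref{proGrtopolofactor} and~\ref{procorprothe:2} are already in place. Two small remarks: the ``main obstacle'' you flag is needed only for the existence half (to see that $\pi_\ast\mu$ attains the pressure one uses that $\mu$ charges the injectivity set, and \eqref{eqvpHnhup} supplies the zero-entropy statement for flat strips); for the uniqueness half it is superfluous, since an arbitrary lift $\widehat\nu$ of an ergodic equilibrium state $\nu$ already satisfies $h_{\widehat\nu}(g^1)\ge h_{\nu}(\psi^1)$ and $\int q\varphi^{(\u)}\,d\widehat\nu=\int q\widetilde\varphi^{(\u)}\,d\nu$, which forces an ergodic component of $\widehat\nu$ to be an equilibrium state for $G$ and hence equal to $\mu$. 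Also, the nontrivial fibers of $\pi$ are transversal arcs of parallel vectors inside flat strips rather than the strips themselves; this is harmless for your argument since they are still contained in $\cS$.
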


When trying to implement a direct proof that $\Psi_{\widetilde r}$ has a unique measure of maximal entropy, 
a natural approach would be to prove that 
$\Psi_{\widetilde r}$ is expansive, topologically mixing and has local product structure. If this is the case,
then \cite{Fra:77} implies the aforementioned uniqueness
(see also \cite{Oka:97}, and \cite[Corollary 6.6]{GelRug:19}). The main difficulty is that, even knowing
that $\Psi$ is topologically mixing and has local product structure, these properties {\em do not}
behave nicely under time-changes. 

Having in mind the importance of properties that remain true under time-changes, in Section \ref{sec:topdyn}
we consider some of them, including expansivity and the pseudo-orbit-tracing property.
Since $\Psi$ has these properties, it follows that $\Psi_{\widetilde r}$ also has, 
thus proving that it belongs to the class of flows that correspond to TA (\emph{topologically Anosov}) homeomorphisms, as defined in \cite[page 90]{AokHir:94}. 

At this point, checking that $\Psi_{\widetilde r}$ is topologically mixing and has local product structure
could be done via a ``topological dynamics'' proof, that is, not relying on further synchronizations. In this direction, a natural approach would be to study expansive flows in parallel with the study of expansive homeomorphisms. However, such a ``translation'' is not always feasible and, besides technical challenges,
it also presents intrinsic novelties, as we now explain.

Let us first recall known results about an expansive homeomorphism $T$ of a compact metric space $X$. 
Bowen proved that if $T$ has the specification property,
then it has a unique measure of maximal entropy \cite{Bow:75}. In general, specification is obtained
as a consequence of topologically mixing and the pseudo-orbit-tracing property. For expansive homeomorphisms, local product structure and the pseudo-orbit-tracing-property are equivalent (see \cite{Omb:86}). 
By a result nowadays known as \emph{Bowen's spectral decomposition theorem} (see \cite[Theorem 3.1.11(2)]{AokHir:94}), if $T$ is topologically transitive and has the pseudo-orbit-tracing property but fails to be topologically mixing, then we can write $X=X_0\cup \cdots\cup X_{n-1}$, where $X_0,X_1,\ldots X_{n-1}$ are  disjoint nonempty closed sets such that $T(X_i)=X_{i+1}$ and $T^n|_{X_i}\colon X_i\to X_i$ is topologically mixing for every $i=0,\ldots,n-1$ (indices are considered modulo $n$). Combining these results, it follows that every transitive expansive homeomorphism with the pseudo-orbit-tracing property has a unique measure of maximal entropy.

In view of the above discussion, one may suspect of the following dichotomy: if $\Phi$ is a continuous transitive expansive fixed point-free flow on a compact metric space with the pseudo-orbit-tracing property, then 
\begin{enumerate}[leftmargin=0.6cm ]
\item[(i)] either $\Phi$ is topologically mixing (and hence has the specification property, see \cite{GelRug:19});
\item[(ii)] or $\Phi$ is a constant time suspension of a transitive expansive homeomorphism with the pseudo-orbit-tracing property.
\end{enumerate}
This dichotomy indeed holds for fixed-point free Axiom A flows \cite[(3.2) Theorem]{Bow:72}. More generally, we have the following result. As it follows \emph{verbatim} from the proof in \cite{Bow:72}, we state it without proof. 

\begin{mtheo}\label{thepropreBru}
Let $\Phi\colon X\times\bR\to X$ be a continuous expansive fixed-point free flow on a compact metric space $X$ that is topologically transitive and has local product structure. Then exactly one of the following properties hold:
\begin{enumerate}[leftmargin=0.6cm ]
\item[(i)] For every periodic point $p=\phi^\tau(p)\in X$ it holds $\overline{\cW^\u(\Phi,p)}= X$, where
\[
	\cW^\u(\Phi,p)
	\eqdef \{y\in X\colon d(\phi^{-t}(y),\phi^{-t}(p))\to0\text{ as }t\to\infty\}.
\]
\item[(ii)] There exist $t>0$ and $p=\phi^\tau(p)\in X$ such that $\Phi$ is topologically conjugate to the time-$t$ suspension flow over the compact metric space $Y\eqdef \overline{\cW^\u(\Phi,p)}$.
Moreover, for every $x\in X$ there is $s\in(0,t]$ such that $\phi^s(x)\in Y$.
Finally, $t$ is the smallest positive number such that $\phi^t(Y)\cap Y\ne\emptyset$.
\end{enumerate}
\end{mtheo}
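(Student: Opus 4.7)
The two cases are mutually exclusive: if (i) holds so that $Y=X$, then $\phi^s(Y)\cap Y=X\neq\emptyset$ for every $s>0$, so no smallest such $s$ exists and (ii) cannot hold. I therefore assume (i) fails and aim to establish (ii). Pick a periodic point $p=\phi^\tau(p)$ with $Y\eqdef\overline{\cW^\u(\Phi,p)}$ a proper closed subset of $X$. Since $\phi^\tau(p)=p$, a direct computation shows $\phi^\tau(\cW^\u(\Phi,p))=\cW^\u(\Phi,p)$, hence $\phi^\tau(Y)=Y$.

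Consider $T\eqdef\{s>0:\phi^s(Y)\cap Y\neq\emptyset\}$. Since $\tau\in T$, the set $T$ is nonempty, and it is closed in $(0,\infty)$ by continuity of $\Phi$ and closedness of $Y$. The first key step is to show $t\eqdef\inf T>0$. Here expansivity enters decisively: if one had $s_n\in T$ with $s_n\to 0^+$ and $y_n\in Y\cap\phi^{-s_n}(Y)$, then passage to limits together with the local product structure would exhibit $Y$ as containing an open transverse neighborhood of some limit point; combined with transitivity this would force $Y=X$, contradicting the hypothesis.

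The decisive step is to upgrade nonempty intersection at $t$ to full equality $\phi^t(Y)=Y$. For $z\in\phi^t(Y)\cap Y$, local product structure at $z$ provides a neighborhood chart decomposing nearby orbit segments into transverse stable and unstable pieces along the flow direction. Both $Y$ and $\phi^t(Y)=\overline{\cW^\u(\Phi,\phi^t(p))}$ are locally saturated by unstable leaves, and the minimality of $t$ prevents any flow-direction offset between them, so the local pieces coincide near $z$. Transitivity of $\Phi$ then propagates this local coincidence globally, yielding $\phi^t(Y)=Y$ and consequently $T\cap\bR_{>0}=t\bZ_{>0}$.

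It remains to show every orbit visits $Y$ within time $t$ and to construct the conjugacy. The set $\bigcup_{s\in[0,t]}\phi^s(Y)$ is closed, $\Phi$-invariant (using $\phi^t(Y)=Y$), and contains $Y$; transitivity forces it to equal $X$, and a small shift argument gives $\phi^s(x)\in Y$ for some $s\in(0,t]$ for every $x\in X$. The map $h\colon\susp(\phi^t|_Y,t)\to X$ sending $(y,s)\mapsto\phi^s(y)$ is surjective by the preceding, and injective (after the identification $(y,t)\sim(\phi^t(y),0)$) by minimality of $t$; compactness promotes this continuous bijection to a topological flow conjugacy. The main obstacle is the third paragraph: extracting the exact equality $\phi^t(Y)=Y$ from mere nonempty intersection is the subtle step where expansivity and local product structure must be combined precisely, and everything else is essentially bookkeeping around this fact.
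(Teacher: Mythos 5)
Your outline reproduces the skeleton of Bowen's argument for basic sets of Axiom~A flows (\cite[(3.2) Theorem]{Bow:72}), which is precisely what the paper relies on: Theorem~\ref{thepropreBru} is stated there \emph{without proof}, with the remark that it follows verbatim from \cite{Bow:72}. So the architecture is the right one, but as a proof the proposal has genuine gaps at exactly the points you yourself flag as decisive, and the sentences offered in their place are assertions rather than arguments. The central step --- upgrading $\phi^t(Y)\cap Y\ne\emptyset$ to $\phi^t(Y)=Y$ --- is handled by ``the local pieces coincide near $z$'' and ``transitivity propagates this local coincidence globally.'' Neither half holds as written. For the first, you must actually prove that $Y=\overline{\cW^\u(\Phi,p)}$ is saturated by \emph{local} strong unstable sets (this needs \eqref{eq:extmanifol} on $\cW^\u(\Phi,p)$ itself together with a semicontinuity statement for $y\mapsto\cW^\u_\eps(\Phi,y)$, which is where expansivity and local product structure genuinely enter), and you must exclude a discrepancy in the \emph{stable} direction, not only a flow-direction offset; minimality of $t$ says nothing about the stable coordinate. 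For the second, two closed sets that agree on a neighborhood of one common point need not be equal, and transitivity does not repair this; the correct route is via the density of $\cW^\u(\Phi,p)$ in $Y$ and a product-structure argument showing that $q\in\phi^t(Y)\cap Y$ forces $\cW^\u(\Phi,q)\subset\phi^t(Y)\cap Y$, after which both sets are identified with $\overline{\cW^\u(\Phi,q)}$, and one deduces that $\{s:\phi^s(Y)\cap Y\ne\emptyset\}$ coincides with the closed subgroup $\{s:\phi^s(Y)=Y\}$ of $\bR$.

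Two further steps are unsupported. First, for $\inf T>0$ you claim that accumulation of arbitrarily small times in $T$ would make $Y$ contain ``an open transverse neighborhood''; but $Y$ is a priori only unstable-saturated, small-time intersections can only thicken it in the flow direction, and nothing in the sketch supplies the stable direction, so no open set is produced --- and even an open subset of $Y$ does not yield $Y=X$ before you know $Y$ is invariant or argue with a dense orbit. Second, for surjectivity you assert that $\bigcup_{s\in[0,t]}\phi^s(Y)$ equals $X$ because it is closed, invariant, and the flow is transitive; a transitive flow has many proper closed invariant subsets (any periodic orbit), so this inference is invalid. What is needed is the density of the center-unstable set of a periodic point, which is again a local-product-structure argument, not a formal consequence of transitivity. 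In short: the decomposition into cases, the definition of $T$, the mutual exclusivity, and the final conjugacy bookkeeping are fine, but every place where expansivity and local product structure must do actual work is replaced by an appeal to plausibility, so the proposal is an outline of Bowen's proof rather than a proof.
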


We also establish the following result.

\begin{mtheo}[Pseudo-orbit-tracing and orbit closing property]\label{thmprolem:pseudoorbittracing}
	Let $\Phi\colon X\times\bR\to X$ be a continuous fixed-point free flow on a compact metric space that is expansive and has local product structure. Then $\Phi$ has the pseudo-orbit-tracing property and the orbit closing property.
\end{mtheo}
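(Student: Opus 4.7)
The plan is to adapt the classical Bowen--Ombach shadowing argument for expansive homeomorphisms with local product structure (cf.\ \cite{Omb:86}) to the flow setting. The only structural role of expansivity is to guarantee uniqueness of tracing orbits, while local product structure furnishes a continuous \emph{bracket} / canonical coordinates on pairs of close points. The principal new difficulty relative to the discrete case is that at each step of the construction one must absorb a small time reparametrization, and all such reparametrizations must be assembled into a coherent, orientation-preserving continuous change of time on $\bR$.

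First I would record a quantitative form of local product structure for fixed-point-free flows: for every sufficiently small $\epsilon>0$ there exists $\delta>0$ such that whenever $d(x,y)<\delta$ there is a unique $|\tau(x,y)|<\epsilon$ and a unique bracket
\[
	[x,y]\in\cW^\s_\epsilon(x)\cap\cW^\u_\epsilon(\phi^{\tau(x,y)}(y)),
\]
and $(x,y)\mapsto([x,y],\tau(x,y))$ is continuous. This canonical coordinates map is the only tool I will use beyond expansivity.

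For the pseudo-orbit-tracing property, fix $\epsilon$ well below the expansivity constant and let $\delta$ be the bracket parameter from the previous step. Given a $\delta$-pseudo-orbit $\{(x_n,t_n)\}_{n\in\bZ}$, I would construct on each finite window $[-N,N]$ a sequence $y_n^{(N)}=[x_n,\phi^{-t_{n-1}}(y_{n+1}^{(N)})]$, recording the bracket time shifts into a cumulative additive cocycle, and then pass to a subsequential limit $y=y_0$ as $N\to\infty$ by compactness. By construction the orbit of $y$ stays $\epsilon$-close to the pseudo-orbit after a reparametrization $\alpha\colon\bR\to\bR$ close to the identity, and expansivity forces the pair $(y,\alpha)$ to be unique.

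For the orbit closing property, I would apply the same bracket construction to the periodic pseudo-orbit generated by a point $x$ with $d(\phi^T(x),x)<\delta$, obtaining a tracing pair $(y,\alpha)$. Since that periodic pseudo-orbit is invariant under shifting the index by one period, the point $\phi^{\alpha(T)}(y)$ also traces it, and the uniqueness from expansivity forces $\phi^{\alpha(T)}(y)=y$, so $y$ is periodic with period close to $T$. The hardest point in the whole scheme, and the step I would have to handle with the most care, is the bookkeeping of the infinitesimal time shifts $\tau(x_n,\,\cdot\,)$ introduced by successive brackets: they must accumulate into a well-defined homeomorphism $\alpha$ of $\bR$ that stays uniformly close to the identity, and in the closing argument they must intertwine cleanly with the index shift so that the identity $\phi^{\alpha(T)}(y)=y$ emerges without spurious corrections.
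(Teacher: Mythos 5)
Your overall strategy is sound and, for the pseudo-orbit-tracing part, it is essentially the standard Bowen--Ombach bracket construction; the paper does not redo this but simply invokes Thomas \cite[Theorem 7.1]{Tho:91}, which is exactly that argument carried out for flows with canonical coordinates. So for the first half your proposal is a (sketchy but correct in outline) self-contained version of what the paper cites.

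For the orbit closing property there are two genuine gaps. First, expansivity does not force $\phi^{\alpha(T)}(y)=y$: from the fact that both $y$ and $\phi^{\alpha(T)}(y)$ trace the same periodic pseudo-orbit, expansivity only yields $\phi^{\alpha(T)}(y)=\psi^{s}(y)$ for some $\lvert s\rvert\le\eps$, so the period you actually obtain is $\ell=\alpha(T)-s$, and you must separately check $\ell>0$ (this is why the definition only requires closing for $t>T_0$). This is the ``spurious correction'' you hoped to avoid; it is unavoidable and is precisely the term $t(z)$ in the paper's computation. Second, and more seriously, the definition of the orbit closing property used here demands $\lvert\ell-t\rvert\le\eps$, not $\lvert\ell-t\rvert\le\eps\,t$. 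Your phrase ``$\alpha$ close to the identity by construction'' does not deliver this: each bracket contributes a time shift of size up to $\eps$, and over $n$ steps these can a priori accumulate to a drift of order $n\eps$; Komuro's normalization \eqref{eq:Komuro} only bounds the drift by $\eps\,t$, which gives the weak closing property the paper explicitly distinguishes from the one being proved. The missing ingredient is the uniform estimate that \emph{any} reparametrization realizing a $\gamma$-shadowing over an interval satisfies $\lvert\rho(s)-s\rvert\le\beta$ on that whole interval, with $\beta$ independent of its length; this is Claim \ref{cla:preper} in the paper, taken from \cite[Lemma 3.1 and Proposition 3.2]{Tho:91}, and it genuinely uses local product structure together with expansivity. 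Without some version of that lemma your argument only proves the weak closing property.
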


Theorems \ref{thepropreBru} and \ref{thmprolem:pseudoorbittracing} are topological in nature and does not relate to synchronization. Unfortunately, it is still unclear if, for expansive fixed-point free flows, the pseudo-orbit-tracing property implies local product structure. If so, then approach \ref{aa2} would lead to a new proof of Theorem \ref{theoremgeodesic}. It would also demonstrate, for expansive fixed-point free flows, that local product structure is preserved under time-changes (in general, the preservation of local product structure under time-changes of flows seems to be a rather delicate problem, see e.g. \cite{Bru:95}).

The structure of this paper is as follows. In Section \ref{sec:entpress} we recall some key concepts from ergodic theory and thermodynamic formalism. Towards approach \ref{aa1}, in Section \ref{sect-MME-symbolic} we study symbolic extensions, while for approach \ref{aa2} we consider topological factors in Section \ref{secmeamaxent}.  In Section \ref{sec:timcha} we discuss time-changes and synchronization and prove Theorem \ref{thepro:2}. In Section \ref{sec:topdyn} we recall key concepts in topological dynamics and prove Theorem \ref{thmprolem:pseudoorbittracing}. In Section \ref{sec:geodesic} we discuss the particular case of geodesic flows and prove Theorem \ref{theoremgeodesic} implementing approach
\ref{aa1} (see Section \ref{ssectime-preserving-extension}) and approach \ref{aa2} (see Section \ref{sect-expansive-factor}). In this latter section, we also prove Corollary \ref{corexpansive}.

\section{Topological pressure and entropy}\label{sec:entpress}

We briefly recall some concepts from thermodynamic formalism. Let $(Y,d)$ be some compact metric space. 
Given a continuous map $\phi\colon Y\to Y$, we denote by $\cM(\phi)$ the space  of all  $\phi$-invariant Borel probability measures and endow it with the weak$\ast$ topology. By $\cM_{\rm erg}(\phi)$ we denote the set of ergodic measures in $\cM(\phi)$.  

Given a continuous flow $\Phi\colon Y\times\bR\to Y$, we write $\cM(\Phi)\eqdef\bigcap_{t\in\bR}\cM(\phi^t)$ for the set of all $\Phi$-invariant probability measures on $Y$ and $\cM_{\rm erg}(\Phi)\subset\cM(\Phi)$ stands for the subset of all ergodic measures. If $\mu\in\cM_{\rm erg}(\Phi)$, then for all but countably many $t$ it holds that $\mu\in\cM_{\rm erg}(\phi^t)$ (\cite{PugShu:71} or \cite[Theorem 3.3.34]{FisHas:19}).

Given $t>0$ and $\varepsilon>0$, we say a set $E\subset Y$ is \emph{$(t,\varepsilon)$-separated} if $y_1$, $y_2\in E$, $y_1\ne y_2$ implies that $d( \phi^s(y_1), \phi^s(y_2))\ge \varepsilon$ for some $s\in[0,t]$ (see \cite[Chapter 9]{Wal:82} or \cite[Chapter 4.2]{FisHas:19} for details). Given a continuous function $f\colon Y\to\bR$, the  \emph{topological pressure} of $f$ with respect to the flow $\Phi$ is defined by
\[
P_{\rm top}(\Phi,f) \eqdef
\lim_{\varepsilon\to 0^+}\limsup_{t\to\infty}\frac{1}{t}\log
\sup\left\{\sum_{y\in E}e^{\int_0^tf( \phi^s(y))\,ds}\right\},
\]
where the supremum is taken over all $(t,\varepsilon)$-separated sets
$E\subset Y$;  the \emph{topological entropy} of $\Phi$, denoted $h_{\rm top}(\Phi)$ is the pressure of the potential $f\equiv0$,  that is, $h_{\rm top}(\Phi) = P_{\rm top}(\Phi,f\equiv0)$.

\subsection{Equilibrium states and measures of maximal entropy}

Given a map $\phi\colon Y\to Y$ and some $\phi$-invariant probability measure $\mu$, denote by $h_\mu(\phi)$ the metric entropy of $\mu$ with respect to $\phi$.  
The following variational principle holds (see \cite[Theorem 9.10]{Wal:82}, or \cite[Theorem 4.3.8]{FisHas:19})
\begin{equation}\label{varprinc}
	P_{\rm top}(\Phi,f)
	=	\sup_{\nu\in \cM(\Phi)}
	\left(h_\mu( \phi^1)+\int f \,d\mu\right)
	=	\sup_{\mu\in \cM_{\rm erg}(\Phi)}
	\left(h_\mu( \phi^1)+\int f \,d\mu\right).
\end{equation}
A measure that realizes the supremum in (\ref{varprinc}) is an \emph{equilibrium state} for the potential $f$ with respect to $\Phi$. An equilibrium state for the constant potential $f\equiv 0$ is a \emph{measure of maximal entropy}. 

Pressure and equilibrium states with respect to a continuous map on $Y$ are analogously defined, see \cite{Wal:82}.

By Abramov's formula (see ~\cite{Abr:59} or \cite[Corollary 4.1.10]{FisHas:19}), for every  $\mu\in\cM(\Phi)$ and $t\in\bR$ we have
\begin{equation}\label{abra}
  h_\mu( \phi^t) = | t|\, h_\mu( \phi^1).
\end{equation}
One calls $h_\mu(\Phi)\eqdef h_\mu( \phi^1)$ the \emph{metric entropy} of $\mu$ with respect to the \emph{flow} $\Phi$.

Let us denote
\begin{equation}\label{eq:T-for-Pintegral}
		\bar f_0^T (y)
	\eqdef \int_0^Tf(\phi^s(y))\,ds.
\end{equation}
We call a continuous function $f\colon Y\to\bR$  \emph{hyperbolic with respect to $\Phi$} or simply \emph{hyperbolic} if there exists $T>0$ such that
\begin{equation}\label{eq:T-for-Pdefhyperbolic}
  P_{\rm top}(\Phi,f)-\frac1T\max_{y\in Y} \barf{T}(y)>0.
\end{equation}
In Section \ref{ssechyppot}, we will further explore hyperbolic potentials in the context of geodesic flows studied in this paper. 

Since $\mu\in\cM(\Phi)$ is $\phi^t$-invariant for every $t>0$, we have $\int f\circ\phi^t\,d\mu=\int f\,d\mu$.  Hence for any $T>0$ it holds
\begin{equation}\label{eq:int-of-barphi}
  \int \barf{T}\,d\mu
	= \int \int_0^Tf\circ \phi^t\,dt\,d\mu
	= \int_0^T\int f\circ \phi^t\,d\mu\,dt
	= T\int f\,d\mu.
\end{equation}
It follows that for every $T>0$ we have
\begin{equation}\label{eq:P}
	P_{\rm top}(\Phi,f)
	= P_{\rm top}( \Phi,\frac1T\bar f_0^T)
	= P_{\rm top}( \phi^1,\frac1T\bar f_0^T),
\end{equation}
where the last term  is the classically defined topological pressure of the potential $\frac1T\bar f_0^T$ with respect to the homeomorphism $ \phi^1$ (see \cite[Definition 9.3]{Wal:82}).

We also note the following simple lemma for later reference.

\begin{lemma}\label{lemequalii}
Let $\Phi$ be a flow on a compact metric space $Y$. Assume $f\colon Y\to\bR$ is a continuous potential, $T>0$, and $\barf{T}$ is given by \eqref{eq:T-for-Pintegral}.
Then
\begin{equation}\label{eq:InoRiv}
	\lim_{T\to\infty}\max_{y\in Y}\frac1T \barf{T}(y)
	= \inf_{T>0}\max_{y\in Y}\frac1T \barf{T}(y)
	= \max_{\mu\in\cM(\Phi)} \int f\,d\mu
	\le P_{\rm top}(\Phi, f).
\end{equation}
\end{lemma}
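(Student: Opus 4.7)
Set $a_T\eqdef\max_{y\in Y}\barf{T}(y)$. The plan is to establish, in this order: (i) subadditivity of $T\mapsto a_T$, which gives the first equality via Fekete's lemma; (ii) a two-sided comparison between $\inf_T a_T/T$ and $\max_{\mu\in\cM(\Phi)}\int f\,d\mu$; and (iii) the final inequality as a direct consequence of the variational principle \eqref{varprinc}.

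For (i), I would use the cocycle identity $\barf{T+S}(y)=\barf{T}(y)+\barf{S}(\phi^T(y))$, which follows from the flow property and the definition \eqref{eq:T-for-Pintegral}. Taking maxima on both sides and noting that $\max_y\barf{S}(\phi^T(y))\le\max_z\barf{S}(z)=a_S$, I obtain $a_{T+S}\le a_T+a_S$. Since $T\mapsto a_T$ is continuous (as $f$ is continuous on the compact set $Y$), the standard continuous version of Fekete's subadditive lemma yields $\lim_{T\to\infty}a_T/T=\inf_{T>0}a_T/T$.

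For (ii), one inequality is immediate: for every $\mu\in\cM(\Phi)$ and every $T>0$, equation \eqref{eq:int-of-barphi} gives $\int f\,d\mu=\tfrac1T\int\barf{T}\,d\mu\le \tfrac1T a_T$, whence
\[
\max_{\mu\in\cM(\Phi)}\int f\,d\mu \le \inf_{T>0}\frac{a_T}{T}.
\]
(The maximum is attained because $\cM(\Phi)$ is weak$\ast$-compact and $\mu\mapsto\int f\,d\mu$ is continuous.) For the reverse inequality, I would run a Krylov--Bogolyubov construction: for each $T>0$ choose $y_T\in Y$ with $\barf{T}(y_T)=a_T$ and form the empirical measure
\[
\mu_T\eqdef \frac{1}{T}\int_0^T \delta_{\phi^s(y_T)}\,ds,
\]
so that $\int f\,d\mu_T=a_T/T$. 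By compactness, some subsequence $\mu_{T_n}$ (with $T_n\to\infty$) converges weak$\ast$ to a measure $\mu_\infty$; a standard computation shows $\mu_\infty\in\cM(\Phi)$, and by weak$\ast$ convergence $\int f\,d\mu_\infty=\lim_{n\to\infty}a_{T_n}/T_n=\inf_{T>0}a_T/T$. This yields the opposite inequality and completes the middle equality.

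For (iii), the variational principle \eqref{varprinc} together with nonnegativity of metric entropy gives $\int f\,d\mu\le h_\mu(\Phi)+\int f\,d\mu\le P_{\rm top}(\Phi,f)$ for every $\mu\in\cM(\Phi)$, hence $\max_{\mu\in\cM(\Phi)}\int f\,d\mu\le P_{\rm top}(\Phi,f)$.

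The only step requiring any care is (ii), specifically checking that the weak$\ast$ limit $\mu_\infty$ is $\Phi$-invariant; this is routine but uses the continuity of $f\circ\phi^t$ uniformly in $t$ on compacts and the flow property, as in the standard Krylov--Bogolyubov argument adapted from homeomorphisms to continuous flows.
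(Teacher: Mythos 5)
Your proposal is correct and follows essentially the same route as the paper: the easy inequality $\max_{\mu\in\cM(\Phi)}\int f\,d\mu\le\inf_{T>0}\max_y\frac1T\barf{T}(y)$ via \eqref{eq:int-of-barphi}, the Krylov--Bogolyubov construction of empirical measures along maximizing points $y_T$, and the variational principle for the final bound. The one difference is that you prove the first equality separately via subadditivity and Fekete's lemma, whereas the paper gets the existence of the limit for free from the sandwich: running your step (ii) along a sequence $T_k$ realizing $\limsup_{T\to\infty}\max_y\frac1T\barf{T}(y)$ shows this $\limsup$ is at most $\max_\mu\int f\,d\mu$, which is at most $\inf_{T>0}\max_y\frac1T\barf{T}(y)\le\liminf_{T\to\infty}\max_y\frac1T\barf{T}(y)$, forcing all quantities to coincide; your Fekete step is correct but redundant.
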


\begin{proof}
The inequality in \eqref{eq:InoRiv} is a consequence of the variational principle \eqref{varprinc}.
Continuity of the functions $\mu\mapsto\int f\,d\mu$ and $y\mapsto \barf{T}(y)$  together with compactness of their domains guarantee that the maxima in \eqref{eq:InoRiv} are indeed attained.

Given $\mu\in\cM(\Phi)$, it follows from \eqref{eq:int-of-barphi} that for every $T>0$ we have
\[
	\int f\,d\mu
	= \frac 1T\int \barf{T}\,d\mu
	\le \max_{y\in Y} \frac1T\barf{T}(y).
\]
Taking first the supremum over $\mu\in\cM(\Phi)$ and then the infimum over $T>0$, gives
\begin{equation}\label{eq:finalineq}
	\max_{\mu\in\cM(\Phi)}\int f\,d\mu
	\le \inf_{T>0}\max_{y\in Y} \frac1T\barf{T}(y).
\end{equation}
For each $T>0$ we let $y_T$ to be a point in $Y$ so that
\[
	\frac1T\barf{T}(y_T)
	= \max_{y\in Y}\frac1T\barf{T}(y)
\]
and define
\[
	\mu_T
	\eqdef \frac1T\int_0^T\delta_{\phi^t(y_T)}\,dt.
\]
Observe that $\mu_T$ is a Borel probability measure on $Y$. Note that
\begin{equation}\label{eq:int-ev}
	\int f\,d\mu_T
	=\frac1T\barf{T}(y_T).
\end{equation}
Choose a sequence $(T_k)_{k=1}^\infty$ with $ T_k\to\infty$ as $k\to\infty$, such that
\begin{equation}\label{eq:lim}
\lim_{k\to\infty}\frac{1}{T_k}\barf{T_k}(y_{T_k})
	= \limsup_{T\to\infty} \frac{1}{T}\barf{T}(y_{T})
	= \limsup_{T\to\infty}\left(\max_{y\in Y}\frac1T\barf{T}(y)\right).
\end{equation}	
Passing, if necessary to a subsequence, we assume that $(\mu_{T_k})_{k=1}^\infty$ converges in the weak$^\ast$ topology to a measure $\nu$ which is easily seen to be $\Phi$-invariant. Hence,  the continuity of $ f$, the fact that $(\mu_{T_k})_{k=1}^\infty$ converges to $\nu$ in the weak$^\ast$ topology,  \eqref{eq:lim} and \eqref{eq:int-ev} imply
\[
	\int f\,d\nu
	= \lim_{k\to\infty}\int f\,d\mu_{T_k}
	= \lim_{k\to\infty}\frac{1}{T_k}\bar f_0^{T_k}(y_{T_k})
	= \limsup_{T\to\infty}\sup_Y\frac{1}{T}\bar f_0^T.
\]
Together with \eqref{eq:finalineq}, this proves all the equalities in \eqref{eq:InoRiv}.
\end{proof}

\subsection{Measures of maximal entropy for (symbolic) extensions}\label{sect-MME-symbolic}

We wish to use symbolic extensions to conclude the uniqueness of the measure of maximal entropy of some
flows. In general, a symbolic extension is neither injective nor surjective, but if it is finite-to-one on subsets
that support the relevant invariant measures, then we can conclude the referred statement.
We start this section with a general result in this context, inspired by \cite[Proposition 13.2]{Sarig-JAMS}.

Let $\Psi\colon X\times \bR\to X$ and $\Phi\colon Y\times\bR\to Y$ be continuous fixed-point free flows,
and let $\mu_1,\mu_2$ be $\Psi$-invariant probability measures. 
We say that a measurable map $\pi\colon Y\to X$ is a \emph{finite-to-one extension for $\mu_1$ and $\mu_2$} 
or that $\Psi$ is a \emph{finite-to-one factor} of $\Phi$ for $\mu_1$ and $\mu_2$ by $\pi$ if the following holds: 
we have $\pi\circ\Phi=\Psi\circ\pi$ and there are measurable sets $Y^\#\subset Y$ and $X^\#\subset X$ such that:
\begin{enumerate}[label=$\circ$]
\item If $\nu$ is a $\Phi$-invariant probability measure, then $\nu(Y^\#)=1$.
\item $\mu_1(X^\#)=\mu_2(X^\#)=1$.
\item $X^\#=\pi(Y^\#)$ and the restriction $\pi|_{Y^\#}\colon Y^\#\to X^\#$ is finite-to-one.
\end{enumerate}

\begin{proposition}\label{prop-finite-to-one-extension}
Let $\Psi\colon X\times \bR\to X$ and $\Phi\colon Y\times\bR\to Y$ be continuous fixed-point free flows.
Let $\mu_1,\mu_2$ be $\Psi$-invariant probability measures that are equilibrium states of a continuous potential $f\colon X\to\bR$, and assume that $\pi\colon Y\to X$ is a finite-to-one extension for $\mu_1$ and $\mu_2$. Then for $i=1,2$ there exists a $\Phi$-invariant probability measure $\nu_i$ which is an equilibrium state for the function $f\circ \pi$ and satisfies $\pi_\ast\nu_i=\mu_i$. 
\end{proposition}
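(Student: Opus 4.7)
The plan is to proceed in two stages: first, for each $i \in \{1,2\}$, construct a $\Phi$-invariant Borel probability measure $\nu_i$ on $Y$ satisfying $\pi_*\nu_i = \mu_i$; second, verify that this lift is automatically an equilibrium state for $f\circ\pi$ by exploiting the preservation of metric entropy under finite-to-one factor maps.

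For the construction, the first step would invoke a Lusin--Novikov-type selection theorem: since $\pi|_{Y^\#}$ is a Borel finite-to-one surjection onto $X^\#$ between standard Borel spaces, $Y^\#$ decomposes as a countable disjoint union of Borel sets $Y^\#_k$ on which $\pi$ is injective, and one assembles a Borel right inverse $\tau\colon X^\# \to Y^\#$ with $\pi\circ\tau = \mathrm{id}_{X^\#}$. Setting $\nu^{(0)}_i \eqdef \tau_*\mu_i$ yields a Borel probability on $Y$ (supported on $Y^\#$) with $\pi_*\nu^{(0)}_i = \mu_i$, but in general $\nu^{(0)}_i$ is not $\Phi$-invariant. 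The second step forms the Krylov--Bogoliubov time-averages
\[
\nu^{(T)}_i \eqdef \frac{1}{T}\int_0^T \phi^s_*\nu^{(0)}_i\,ds
\]
and extracts a weak$^\ast$ accumulation point $\nu_i$ as $T \to \infty$; by standard arguments $\nu_i \in \cM(\Phi)$. The semiconjugacy $\pi\circ\phi^s = \psi^s\circ\pi$ together with $\Psi$-invariance of $\mu_i$ yields $\pi_*\nu^{(T)}_i = \mu_i$ for every $T$, and passing this identity to the limit gives $\pi_*\nu_i = \mu_i$.

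With the lift in hand, the equilibrium property follows from entropy preservation. Because every $\Phi$-invariant $\nu$ satisfies $\nu(Y^\#) = 1$ and $\pi|_{Y^\#}$ is essentially finite-to-one, the fiber-relative entropy of $\pi$ vanishes, so the Abramov--Rokhlin formula gives $h_\nu(\Phi) = h_{\pi_*\nu}(\Psi)$. Combining this with the identity $\int f\circ\pi\,d\nu = \int f\,d\pi_*\nu$ and the assumption that $\mu_i$ is an equilibrium state for $f$ under $\Psi$, one obtains for every $\nu \in \cM(\Phi)$
\[
h_\nu(\Phi) + \int f\circ\pi\,d\nu \;=\; h_{\pi_*\nu}(\Psi) + \int f\,d\pi_*\nu \;\le\; h_{\mu_i}(\Psi) + \int f\,d\mu_i,
\]
with equality at $\nu = \nu_i$. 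Hence $\nu_i$ is an equilibrium state for $f\circ\pi$ with respect to $\Phi$.

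The main obstacle will be justifying the identity $\pi_*\nu_i = \mu_i$ upon passage to the weak$^\ast$ limit, since $\pi$ is only assumed Borel and hence $\pi_*\colon \cM(Y)\to\cM(X)$ need not be weak$^\ast$-continuous. The intended remedy is to exploit the essential $\Phi$-invariance of $Y^\#$ and work with the induced abstract measurable factor structure there; alternatively, in the concrete applications in Sections~\ref{ssectime-preserving-extension} and~\ref{sect-expansive-factor} the map $\pi$ is genuinely continuous, so the limit step is immediate.
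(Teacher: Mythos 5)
Your second stage (entropy preservation via Abramov--Rokhlin plus the pressure comparison) matches the paper's argument and is fine once a correct lift is in hand. The gap is in the construction of the lift itself. Your Krylov--Bogoliubov scheme has two problems, only one of which you flag. First, the one you flag is real and you do not resolve it: $\pi$ is only assumed \emph{measurable} in the definition of a finite-to-one extension, so $\pi_\ast$ is not weak$^\ast$-continuous and the identity $\pi_\ast\nu^{(T)}_i=\mu_i$ need not survive passage to the accumulation point; appealing to continuity of $\pi$ in the applications amounts to strengthening the hypotheses of the proposition. Second, and you do not mention this, the extraction of a weak$^\ast$ accumulation point of $\nu^{(T)}_i$ requires compactness of $Y$ (or at least tightness of the family), and in the intended application $Y=\Sigma_R$ is the suspension of a \emph{countable-state} Markov shift, which is not compact; the averages could lose mass at infinity, so $\nu_i$ might not even be a probability measure.

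The paper avoids both issues by constructing the invariant lift in one stroke (Sarig's technique): it sets
\[
	\nu
	\eqdef \int_{X^\#} \Bigl(\frac{1}{|\pi^{-1}(x)\cap Y^\#|}\sum_{y\in \pi^{-1}(x)\cap Y^\#}\delta_y\Bigr)\, d\mu(x),
\]
i.e.\ it disintegrates over $X^\#$ using the \emph{uniform} measure on each finite fiber rather than a single Borel selector. Because $\pi$ intertwines the flows, $\phi^t$ carries the fiber over $x$ bijectively to the fiber over $\psi^t(x)$, so this $\nu$ is $\Phi$-invariant by construction and satisfies $\pi_\ast\nu=\mu$ by construction --- no time-averaging, no limit, no continuity of $\pi$, and no compactness of $Y$ are needed. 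If you want to salvage your approach, the fix is essentially to replace the single section $\tau$ by the symmetrized average over all of the finitely many sections provided by Lusin--Novikov, which reproduces the paper's measure and makes the Krylov--Bogoliubov step unnecessary.
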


\begin{proof}
The following procedure for lifting $\mu$ to $Y$ is based on the technique described in \cite[Proof of Proposition 13.2]{Sarig-JAMS}. 
Fix $i\in\{1,2\}$ and write $\mu=\mu_i$. Since $\mu(X^\#)=1$, the measure
\[
	\nu
	\eqdef \int_{X^\#} \left(\frac{1}{|\pi^{-1}(x)\cap Y^\#|}\sum_{y\in \pi^{-1}(x)\cap Y^\#}\delta_y\right) d\mu(x)
\]
is a well-defined $\Phi$-invariant probability measure. Since the restriction $\pi|_{Y^\#}\colon Y^\#\to X^\#$ is finite-to-one, the Abramov-Rokhlin formula implies that
$h_\nu(\Phi)=h_\mu(\Psi)$.

It remains to prove that $\nu$ is an equilibrium state for $f\circ\pi$. For that, we need to project measures. Let $\xi$ be a $\Phi$-invariant probability measure. By assumption, $\xi(Y^\#)=1$. Then $\eta\eqdef\pi_\ast\xi$ is a $\Psi$-invariant probability measure. It follows from our assumption that $\eta(X^\#)=\xi(\pi^{-1}(X^\#))=\xi(Y^\#)=1$. By the Abramov-Rokhlin formula, it follows that $h_{\eta}(\Psi)=h_\xi(\Phi)$. Hence, 
\[
	h_\xi(\Phi)+\int_Y (f\circ\pi)\,d\xi
	=h_\eta(\Psi)+\int_X f\,d\eta \leq P_{\rm top}(\Psi,f).
\]
As $\xi$ was arbitrary, this implies that $P_{\rm top}(\Phi,f\circ \pi)\leq P_{\rm top}(\Psi,f)$. But since
\[
	h_{\nu}(\Phi)+\int_Y(f\circ\pi)\,d\nu
	=h_{\mu}(\Psi)+\int_X f\,d\mu =P_{\rm top}(\Psi,f),
\]
it follows that $P_{\rm top}(\Phi,f\circ \pi)= P_{\rm top}(\Psi,f)$. Hence, $\nu$ is an equilibrium state for $f\circ\pi$.
\end{proof}

Now we introduce topological Markov flows, which are the symbolic extensions we are interested in.
Let $\sG=(V,E)$ be an oriented graph whose vertex set is $V$
and edge set is $E$. An edge from $v\in V$ to $w\in V$ denote edges by $v\to w$ and assume that $V$ is countable. A \emph{topological Markov shift} (\emph{TMS}) is a pair $(\Sigma,\sigma)$, where 
\[
	\Sigma\eqdef
	\{\bZ\text{-indexed paths on }\sG\}
	= \{\uv=(v_n)_{n\in\bZ}\in V^\bZ\colon v_n\to v_{n+1},\,\forall n\in\bZ\}
\]
is the \emph{symbolic space} and $\sigma\colon\Sigma\to\Sigma$, $\sigma(\uv)_n=v_{n+1}$, is the \emph{left shift}. 
We endow $\Sigma$ with the distance $d(\uv,\uw)\eqdef \exp(-\inf\{|n|\colon v_n\ne w_n\})$.
We only consider TMS that are \emph{locally compact}, that is, for all $v\in V$ the number of ingoing edges $u\to v$ and outgoing edges $v\to w$ is finite.

Given some \emph{roof function} $R\colon\Sigma\to\bR_{>0}$, consider the \emph{suspension space} 
\[
	\Sigma_R
	\eqdef (\Sigma\times\bR_{\ge0})/_\sim,
\]
where $\sim$ is the equivalence relation identifying $(\uv,s)$ with $(\sigma(\uv),s-R(\uv))$. Unless stated otherwise, we agree to represent each class by its \emph{canonical representation} $(\uv,s)$, where $s\in[0,R(\uv))$. We equip this space with the Bowen-Walters metric $d_{\Sigma,R}$ (see \cite[Section 4]{BowWal:72} for details). The \emph{suspension} of $(\Sigma,\sigma)$ by $R$ is the flow which for $t$ small such that $0\le s+t<R(\uv)$ is defined by 
\[
	\Theta=\Theta_{\Sigma,R}:\Sigma_R\times\bR\to\Sigma_R,\quad
	\theta^t(\uv,s)
	\eqdef (\uv,s+t)
\]
and extended to all $t\in\bR$ accordingly. This flow is also called the \emph{topological Markov flow} (\emph{TMF}) defined by a TMS $(\Sigma,\sigma)$ and $R$. 

Let $\fm$ be the Lebesgue measure on $\bR$. If $\nu$ is a $\sigma$-invariant ergodic probability measure, then 
\begin{equation}\label{def-lift-measure}
	\mu=\mu(\nu)
	\eqdef \frac{(\nu\times\fm)|_{\Sigma_R}}{(\nu\times\fm)(\Sigma_R)}
\end{equation}
is a $\Theta$-invariant ergodic probability measure.

\begin{lemma}\label{lemcompl}
	Let $R\colon\Sigma\to\bR_{>0}$ be a continuous function which is uniformly bounded away from zero and infinity. Then the map $\nu\mapsto\mu(\nu)$ in \eqref{def-lift-measure} is a bijection between $\cM(\sigma)$ and $\cM(\Phi)$.
\end{lemma}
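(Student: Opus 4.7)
The plan is to establish the bijection in three steps, corresponding to well-definedness, injectivity, and surjectivity of $\nu\mapsto\mu(\nu)$, the last of which is where the real work lies.

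First, since $0<\inf R\le\sup R<\infty$, the normalizing constant $(\nu\times\fm)(\Sigma_R)=\int R\,d\nu$ is positive and finite for every $\nu\in\cM(\sigma)$, so $\mu(\nu)$ is a well-defined Borel probability measure on $\Sigma_R$. The $\Theta$-invariance of $\mu(\nu)$ is a standard Fubini computation using the translation-invariance of $\fm$ in the fiber direction together with the identification $(v,R(v))\sim(\sigma v,0)$ and the $\sigma$-invariance of $\nu$.

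For injectivity, fix any $\tau\in(0,\inf R)$ and observe that every equivalence class in $\Sigma\times[0,\tau)$ has a unique canonical representative with $s<R(v)$. Hence for every Borel $A\subset\Sigma$,
\[
\mu(\nu)\bigl(\{(v,s)\in\Sigma_R:v\in A,\;0\le s<\tau\}\bigr)=\frac{\tau\,\nu(A)}{\int R\,d\nu}.
\]
Taking $A=\Sigma$ recovers the factor $\int R\,d\nu$ from $\mu(\nu)$, and then $\nu(A)$ is determined for every Borel $A$.

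For surjectivity, given $\mu\in\cM(\Theta)$, fix $\tau\in(0,\inf R)$ and define a positive finite Borel measure on $\Sigma$ by $\nu_\tau(A):=\tau^{-1}\mu(\{(v,s):v\in A,\,0\le s<\tau\})$. Applying $\Theta$-invariance to small flow translations contained entirely in $\Sigma\times[0,\inf R)$ shows $\nu_\tau$ is independent of $\tau$; denote the common measure by $\nu_0$. The key step is $\sigma$-invariance of $\nu_0$: for a Borel $A\subset\Sigma$, the set $B_A:=\{(v,s):v\in\sigma^{-1}(A),\,R(v)-\tau\le s<R(v)\}$ is mapped by $\theta^\tau$ onto $A\times[0,\tau)$ in canonical coordinates (since $(\sigma v,0)\sim(v,R(v))$), so $\Theta$-invariance yields $\mu(B_A)=\tau\nu_0(A)$. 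On the other hand, I want to identify $\mu(B_A)$ with $\tau\nu_0(\sigma^{-1}A)=\mu(\sigma^{-1}(A)\times[0,\tau))$. This is the main obstacle because $R$ is not constant, so $B_A$ is not a ``rectangle'' and one cannot produce a single $\theta^{-t}$ that carries $\sigma^{-1}(A)\times[0,\tau)$ onto $B_A$. The plan is to use inner regularity of $\mu$ to reduce to compact $A$, then partition $\sigma^{-1}(A)$ into finitely many Borel pieces $A_1,\dots,A_n$ on each of which $R$ oscillates by less than $\eta$ (possible because $R$ is continuous and we may restrict to a compact subset of $\Sigma$ carrying all the mass up to $\eta$), apply $\Theta$-invariance of $\mu$ to the near-constant shifts $\theta^{-(R(v_i)-\tau)}$ on each piece, and let $\eta\to 0$. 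Finally, set $\nu:=\nu_0/\nu_0(\Sigma)$; the identity $\mu(\Sigma_R)=1$ combined with the flow-box decomposition forces $\int R\,d\nu_0=1$, so one verifies $\mu(\nu)=\mu$ by comparing both measures on the generating family of sets $A\times[0,\tau)$ with $\tau<\inf R$, reducing to the already-established formula from Step~2.
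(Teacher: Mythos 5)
Your proposal is correct, and it takes a genuinely different route from the paper. The paper argues via disintegration: it pushes a given $\Theta$-invariant $\mu$ forward to $\widetilde\nu=\varrho_\ast\mu$ on $\Sigma$, observes that $\Theta$-invariance forces the conditional measures of $\mu$ on the fibers $\{\uv\}\times[0,R(\uv)]$ to be normalized Lebesgue, and then defines $d\nu=\tfrac{1}{N}\tfrac{1}{R}\,d\widetilde\nu$; the $\sigma$-invariance of $\nu$ and the identity $\mu=\mu(\nu)$ are then asserted with little further detail. You instead extract the base measure directly from the $\mu$-mass of thin flow-boxes, $\nu_0(A)=\tau^{-1}\mu(A\times[0,\tau))$ for $\tau<\inf R$ (your $\nu_0$ equals $N\nu$ in the paper's notation, since $\mu(A\times[0,\tau))=\int_A \tau/R\,d\widetilde\nu$), and you prove $\sigma$-invariance by flowing the top slab over $\sigma^{-1}(A)$ onto $A\times[0,\tau)$ and handling the non-constancy of $R$ by a partition into pieces of small oscillation, using inner regularity to reduce to a compact subset of the (possibly non-compact, countable-alphabet) $\Sigma$ where $R$ is uniformly continuous. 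This is precisely the point the paper glosses over, so your argument is more self-contained there, at the cost of a fiddlier approximation; the paper's disintegration route is shorter but leans on the standard structure theory of suspension-flow invariant measures. You also explicitly treat well-definedness and injectivity, which the paper omits. Two minor points to tighten in a write-up: the independence of $\nu_\tau$ from $\tau$ uses additivity of $\tau\mapsto\mu(A\times[0,\tau))$ under small flow translations plus monotonicity (a Cauchy-functional-equation argument), and the final identification $\mu(\nu)=\mu$ should invoke that the flow-boxes $A\times[a,b)$ with $b\le\min_A R$ form a $\pi$-system generating the Borel $\sigma$-algebra of $\Sigma_R$.
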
	 

\begin{proof}
	Let $\mu$ be a $\Phi$-invariant Borel probability measure. Let $\widetilde\nu\eqdef\pi_\ast\mu$ denote its projection to $\Sigma$ by the canonical projection $\varrho\colon\Sigma_R\to\Sigma$ defined by $\varrho(\uv,s)\eqdef\uv$. The collection of sets $\cCC\eqdef\{\{\uv\}\times [0,R(\uv)]\colon\uv\in\Sigma\}$ provides a measurable partition of $\Sigma_R$. As $\mu$ is $\Theta$-invariant, its conditional measures on the elements of $\cCC$ are the normalized Lebesgue measures on $[0,R(\uv)]$, for $\widetilde\nu$-almost every $\uv$. If follows from our hypothesis that $1/R$ is bounded and hence $\widetilde\nu$-integrable. Consider the measure $\nu$ on $\Sigma$ defined by
\[
	\nu(A)
	\eqdef \frac1N\int_A\frac{1}{R(\uv)}\,d\widetilde\nu(\uv),
	\quad\text{ where }\quad
	N
	\eqdef \int \frac1R\,d\widetilde\nu.
\]	
It is easy to see that $\nu$ is a Borel probability measure. By definition, $\int R\,d\nu=(\nu\times\fm)(\Sigma_R)$. 
Moreover, $\nu$ is $\sigma$-invariant and
\[
	\mu
	= \frac{1}{\int Rd\nu}(\nu\times\fm)|_{\Sigma_R}
	= \frac{(\nu\times\fm)|_{\Sigma_R}}{(\nu\times\fm)|(\Sigma_R)}.
\] 
This completes the proof.
\end{proof}

Given a measurable function $f\colon\Sigma_R\to\bR$, let 
\[
	\Delta f\colon\Sigma\to\bR,\quad
	\Delta f(\uv)
	\eqdef \int_0^{R(\uv)}f(\uv,s)\,ds.
\]
By the Abramov formulas,
\begin{equation}\label{eqAbramov}
	h_{\mu}(\Theta)
	= \frac{h_\nu(\sigma)}{\int R\,d\nu}
		\quad\text{ and }\quad
	\int f d\mu
	= \frac{\int\Delta f\,d\nu}{\int R\,d\nu}.
\end{equation}

\begin{theorem}\label{thelemkey}
Let $\Theta$ be a TMF defined by a TMS $(\Sigma,\sigma)$ and a continuous roof function $R\colon\Sigma\to\bR_{>0}$ which is uniformly bounded away from zero and infinity. Let $f\colon\Sigma_R\to\bR$ be a continuous function. Then:
\begin{enumerate}
\item[(i)] $P_{\rm top}(\Theta,f)=c$ if, and only if, $P_{\rm top}(\sigma,\Delta f-cR)=0$. 
\item[(ii)] For every $\nu\in \cM(\sigma)$ the measure $\mu(\nu)$ defined by \eqref{def-lift-measure} is an equilibrium state for $f$ with respect to $\Theta$ if, and only if, $\nu$ is an equilibrium state for $\Delta f-P_{\rm top}(\Theta,f)R$ with respect to $\sigma$.
\end{enumerate}
In particular, $\mu(\nu)$ is a measure of maximal entropy for $\Theta$ if, and only if,  $\nu$ is an equilibrium state for $-h_{\rm top}(\Theta)R$ with respect to $\sigma$.
\end{theorem}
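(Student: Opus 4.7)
The plan is to derive both statements from the variational principle together with the Abramov formulas \eqref{eqAbramov} and the bijection $\nu\mapsto\mu(\nu)$ of Lemma \ref{lemcompl}. The key elementary identity is that, for $\mu=\mu(\nu)$ and any constant $c\in\bR$,
\[
  h_\mu(\Theta)+\int f\,d\mu - c
  = \frac{h_\nu(\sigma)+\int(\Delta f-cR)\,d\nu}{\int R\,d\nu},
\]
where the denominator is uniformly bounded in $[\inf R,\sup R]\subset(0,\infty)$ by hypothesis.

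For (i), I would take the supremum of both sides over all invariant measures and invoke the variational principle. Since $\int R\,d\nu>0$ uniformly, the supremum on the left-hand side and the supremum of the numerator on the right-hand side have the same sign; via Lemma \ref{lemcompl} these suprema are respectively $P_{\rm top}(\Theta,f)-c$ and $P_{\rm top}(\sigma,\Delta f-cR)$. Hence they vanish simultaneously, which is precisely the claim. For (ii), I would set $c=P_{\rm top}(\Theta,f)$, so by (i) we have $P_{\rm top}(\sigma,\Delta f-cR)=0$. The displayed identity then shows that $\mu(\nu)$ attains $P_{\rm top}(\Theta,f)$ if and only if $\nu$ attains $0=P_{\rm top}(\sigma,\Delta f-cR)$, i.e. $\nu$ is an equilibrium state for $\Delta f-cR$ with respect to $\sigma$. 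The measure-of-maximal-entropy conclusion is then just the case $f\equiv 0$, where $\Delta f\equiv 0$ and $P_{\rm top}(\Theta,0)=h_{\rm top}(\Theta)$.

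The main obstacle I anticipate is technical rather than conceptual: since $(\Sigma,\sigma)$ is merely locally compact and $\Sigma_R$ inherits this lack of compactness, the version of the variational principle used must be valid in this setting. This is standard for topological Markov shifts and flows, and the uniform boundedness of $R$ away from $0$ and $\infty$ allows one to transfer $(t,\varepsilon)$-separated sets between $\Sigma$ and $\Sigma_R$; if needed, a short direct argument matching separated sets on both sides together with the classical Abramov relation gives the required version.
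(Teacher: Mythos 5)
Your proposal is correct and follows essentially the same route as the paper: the paper's proof consists precisely of the identity $h_\nu(\sigma)+\int(\Delta f-cR)\,d\nu=\bigl(h_{\mu(\nu)}(\Theta)+\int f\,d\mu(\nu)-c\bigr)\int R\,d\nu$, obtained from the Abramov formulas \eqref{eqAbramov} and the bijection of Lemma \ref{lemcompl}, followed by taking suprema and using that $\int R\,d\nu$ is uniformly bounded away from $0$ and $\infty$. Your closing remark about the noncompactness of $\Sigma$ and the version of the variational principle being used is a legitimate technical point that the paper itself does not address.
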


\begin{proof}
Using Lemma \ref{lemcompl} we easily see that \eqref{eqAbramov} yields
\[
	\sup_\nu\left(h_\nu(\sigma)+\int(\Delta f-c R)\,d\nu\right)
	= \sup_\nu\left(\frac{h_\nu(\sigma)}{\int R\,d\nu}+\frac{\int\Delta f\,d\nu}{\int R\,d\nu}-c\right)
		\int R\,d\nu,
\]
where both suprema are taken over all $\sigma$-invariant ergodic probability measures $\nu$. This immediately implies both assertions.
\end{proof}

We now state a criterion for the uniqueness of the measure of maximal entropy for 
TMF. Recall that a TMS $(\Sigma,\sigma)$ with associated vertex set $V$ is  \emph{irreducible} 
or \emph{topologically transitive} if
\[
	\forall u,w\in V,\,\exists \uv=(v_k)_{k\in\mathbb{Z}}\in \Sigma\text{ and } \exists n\in\bN\text{ such that } v_0=u \text{ and } v_n=w.
\]	
Let $\Sigma_R$ be a TMF associated to $(\Sigma,\sigma)$ and $R$. Accordingly, we call $\Sigma_R$
\emph{irreducible} if $(\Sigma,\sigma)$ is irreducible as above.

\begin{corollary}\label{corTMFmme}
Let $(\Sigma,\sigma)$ be an irreducible TMS and $R\colon\Sigma\to\bR_{>0}$ a H\"older continuous finite roof function. Then the associated TMF $\Sigma_R$ has at most one measure of maximal entropy.
\end{corollary}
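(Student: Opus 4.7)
The plan is to apply Theorem \ref{thelemkey} with $f\equiv 0$ to translate the question of uniqueness of measures of maximal entropy for the suspension flow $\Theta$ into a question about equilibrium states for a H\"older potential on the base shift $\sigma$. By Theorem \ref{thelemkey}(ii), $\mu(\nu)$ is a measure of maximal entropy for $\Theta$ if and only if $\nu\in\cM(\sigma)$ is an equilibrium state with respect to $\sigma$ for the potential
\[
	\psi \eqdef -h_{\rm top}(\Theta)\,R.
\]
Lemma \ref{lemcompl} furnishes a bijection $\nu\mapsto\mu(\nu)$ between $\cM(\sigma)$ and $\cM(\Theta)$, so two distinct measures of maximal entropy for $\Theta$ would descend to two distinct equilibrium states for $\psi$ on $(\Sigma,\sigma)$. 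Hence I would reduce the corollary to proving that $\psi$ admits at most one $\sigma$-invariant equilibrium state.

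Next, I would verify that $\psi$ is in the regime where uniqueness is available for countable Markov shifts: it is a constant multiple of the H\"older function $R$, hence H\"older continuous with summable variations; it is bounded because $R$ is uniformly bounded away from $0$ and $\infty$; and its pressure is finite---in fact Theorem \ref{thelemkey}(i) forces $P_{\rm top}(\sigma,\psi)=0$. Combined with the standing assumption that $(\Sigma,\sigma)$ is a locally compact irreducible TMS, this places $\psi$ within Sarig's thermodynamic formalism for countable Markov shifts, in which H\"older potentials with finite Gurevich pressure on an irreducible locally compact TMS admit at most one equilibrium state (cf.~the Buzzi--Sarig uniqueness theorem and the Ruelle--Perron--Frobenius framework referenced via \cite{Sarig-JAMS}). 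Invoking that result finishes the argument.

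The main obstacle I expect is a bookkeeping one, namely matching the precise hypotheses of the cited uniqueness theorem, since some standard formulations assume topological mixing rather than mere irreducibility. To handle this I would appeal to the spectral decomposition of an irreducible locally compact TMS into finitely many topologically mixing subsystems permuted cyclically by a power of $\sigma$: any ergodic equilibrium state for $\psi$ must be supported on one of these mixing components, where the classical uniqueness theorem applies directly. Uniqueness among ergodic equilibrium states, combined with the convexity of the set of equilibrium states and the ergodic decomposition, then yields uniqueness in full generality, completing the proof.
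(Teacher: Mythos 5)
Your argument is correct and is essentially the paper's own proof: reduce via Lemma \ref{lemcompl} and Theorem \ref{thelemkey} (with $f\equiv 0$) to uniqueness of equilibrium states for $-h_{\rm top}(\Theta)R$ on the base shift, then invoke the Buzzi--Sarig uniqueness theorem \cite[Theorem 1.1]{BuzSar:03}. The detour through a spectral decomposition in your last paragraph is unnecessary, since that uniqueness theorem is already stated for topologically transitive (i.e.\ irreducible) countable Markov shifts.
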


\begin{proof}
Assume that the TMF has a measure $\mu$ of maximal entropy. By Lemma \ref{lemcompl}, there is some $\sigma$-invariant probability measure $\nu$ such that $\mu=\mu(\nu)$ as in \eqref{def-lift-measure}. By Theorem \ref{thelemkey}, $\nu$ is an equilibrium state for $-h_{\rm top}(\Theta_{\Sigma,R})R$. Since $R$ is H\"older continuous, by \cite[Theorem 1.1]{BuzSar:03} this equilibrium state is unique.
\end{proof}

\begin{corollary}
Let $\Psi\colon X\times\bR\to X$ be a continuous fixed-point free flow. 
If $\Psi$ has two measures of maximal entropy $\mu_1$ and $\mu_2$,
then $\Psi$ is not a finite-to-one factor of an irreducible TMF for $\mu_1$ and $\mu_2$.
\end{corollary}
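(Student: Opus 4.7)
The plan is to argue by contrapositive and combine the two preceding results, namely Proposition \ref{prop-finite-to-one-extension} (lifting of equilibrium states through finite-to-one factor maps) and Corollary \ref{corTMFmme} (at most one MME for irreducible TMFs). Suppose, contrary to the claim, that $\Psi$ is a finite-to-one factor of an irreducible TMF $\Theta$ for the pair $\mu_1, \mu_2$, via some factor map $\pi\colon\Sigma_R\to X$. Since any MME is by definition an equilibrium state for the continuous potential $f\equiv 0$, Proposition \ref{prop-finite-to-one-extension} applies to $\mu_1$ and $\mu_2$ with this choice of potential.

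The proposition then produces, for each $i\in\{1,2\}$, a $\Theta$-invariant probability measure $\nu_i$ that is an equilibrium state for $f\circ\pi\equiv 0$, i.e.\ a measure of maximal entropy for $\Theta$, and that satisfies $\pi_\ast\nu_i=\mu_i$. Because $\mu_1\neq\mu_2$, the functorial identity $\pi_\ast\nu_i=\mu_i$ forces $\nu_1\neq\nu_2$, so $\Theta$ admits at least two distinct measures of maximal entropy.

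On the other hand, Corollary \ref{corTMFmme} guarantees that an irreducible TMF has \emph{at most one} measure of maximal entropy, yielding a contradiction and completing the proof. The only subtlety — and the single step worth scrutinizing — is that Corollary \ref{corTMFmme} is stated under the hypothesis that the roof function $R$ is H\"older continuous and finite; one should therefore either include this regularity in the meaning of ``irreducible TMF'' in the statement, or note that the invocation of \cite[Theorem 1.1]{BuzSar:03} inside the proof of Corollary \ref{corTMFmme} requires it. Once that convention is fixed, the argument is essentially immediate: the real content lies in Proposition \ref{prop-finite-to-one-extension} and Corollary \ref{corTMFmme}, and the corollary stated here is just the contrapositive of their conjunction.
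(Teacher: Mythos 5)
Your argument is correct and is essentially identical to the paper's own proof: assume the factor exists, lift $\mu_1,\mu_2$ via Proposition \ref{prop-finite-to-one-extension} (with $f\equiv 0$) to two distinct measures of maximal entropy for the TMF, and contradict Corollary \ref{corTMFmme}. Your remark about the H\"older continuity of the roof function being implicitly assumed in the meaning of ``irreducible TMF'' is a fair observation — the paper's statement has the same implicit convention.
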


\begin{proof}
By contradiction, assume that $\pi\colon\Sigma_R\to X$ is a finite-to-one extension 
for $\mu_1$ and $\mu_2$, where $\Sigma_R$ is irreducible. Let $\Theta$ be the suspension
flow on $\Sigma_R$.
By Proposition \ref{prop-finite-to-one-extension}, $\mu_1$ and $\mu_2$ lift to distinct 
measures of maximal entropy for  $\Theta$, which contradicts
Corollary \ref{corTMFmme}. This completes the proof.
\end{proof}

\subsection{Measures of maximal entropy for factors}\label{secmeamaxent}

A continuous flow $\Psi\colon X\times\bR \to X$ is a \emph{time-preserving topological factor} of a continuous flow $\Phi\colon Y\times\bR\to Y$  if there exists a continuous surjective map $\pi\colon Y\to X$ such that for every $t\in\bR$ we have
\[
	\pi\circ \phi^t
	= \psi^t\circ \pi.
\]
We say that $\Psi$ is an \emph{entropy-preserving topological factor} of $\Phi$ on $\cN\subset\cM_{\rm erg}(\Phi)$ if%
\footnote{Here, $h_{\rm top}(\phi^1,A)$ refers to the topological entropy of a set $A\subset Y$ with respect to $\phi^1$ (we refer to \cite{Bow:73b} for the definition).}
  for every $\mu\in\cN$ we have:	
\begin{enumerate}[label=\textnormal{(\arabic*)}]
	\item $h_{\rm top}(\phi^1,\pi^{-1}(\pi(y)))=0$ for $\mu$-almost every $y\in Y$;\label{c:A} 
	\item $\pi_\ast\mu(\{\pi(y)\colon \pi^{-1}(\pi(y))=\{y\}\})=1$. \label{c:B}
\end{enumerate}

Below we formulate a version of~\cite[Theorem 1.5]{BuzFisSamVas:12} for flows (the original in \cite{BuzFisSamVas:12} is stated for discrete systems). For completeness, we provide a full proof, which is based on the essential observation that it suffices to study the time-$1$ map of the flow and hence to reduce the arguments to the ones in \cite{BuzFisSamVas:12}. Note that \cite{BuzFisSamVas:12} also discusses the explicit \emph{construction} of the measure of maximal entropy as limit measure of the usual convex combination of periodic orbit-measures. This construction can be done analogously but will not be discussed here (see also \cite[Theorem 8.3.6]{FisHas:19}).

\begin{proposition}\label{prothe:2}
	Let $\Psi\colon X\times\bR\to X$ and $\Phi\colon Y\times\bR\to Y$ be two continuous fixed-point free flows on compact metric spaces $X$ and $Y$, respectively. Assume that $\Psi$ is a time- and entropy-preserving topological factor of $\Phi$ on $\cN\subset\cM_{\rm erg}(\Phi)$ through a continuous surjective map $\pi\colon Y\to X$. 
If $\nu$ is the unique measure of maximal entropy in $\pi_\ast\cN$ (with respect to $\Psi$), that is, $\nu\in\cM_{\rm erg}(\Psi)$ is the unique measure satisfying $\nu\in\pi_\ast\cN$ and
\[
	h_\nu(\psi^1)
	= h_{\rm top}(\Psi),
\]	 
then $\nu$ lifts uniquely to a  measure of maximal entropy in $\cN$ (with respect to $\Phi$).
\end{proposition}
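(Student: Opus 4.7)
My plan is to reduce the statement to the discrete-time setting via the time-$1$ map and apply a Ledrappier--Walters type fiber-entropy formula, in the spirit of the proof of \cite[Theorem 1.5]{BuzFisSamVas:12}. Since $\pi\circ\phi^1=\psi^1\circ\pi$, the map $\pi\colon Y\to X$ is a topological factor of the homeomorphism $\phi^1$ onto $\psi^1$, and by Abramov's formula \eqref{abra} the measure-theoretic entropies of the flow and of its time-$1$ map coincide. It therefore suffices to establish the existence and uniqueness of the lift at the level of $\phi^1,\psi^1$ and translate back via \eqref{abra}.

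The key analytic input is the Ledrappier--Walters inequality: for every $\mu\in\cM(\phi^1)$ with $\pi_\ast\mu=\eta$,
\[
        h_\eta(\psi^1)\le h_\mu(\phi^1)\le h_\eta(\psi^1)+\int_X h_{\rm top}(\phi^1,\pi^{-1}(x))\,d\eta(x).
\]
Condition \ref{c:A} forces the integrand to vanish $\eta$-almost everywhere whenever $\mu\in\cN$, so $h_\mu(\Phi)=h_{\pi_\ast\mu}(\Psi)$ for every $\mu\in\cN$. Existence and maximality of the lift are now immediate: the hypothesis $\nu\in\pi_\ast\cN$ yields some $\mu_0\in\cN$ with $\pi_\ast\mu_0=\nu$, and the entropy identity gives $h_{\mu_0}(\Phi)=h_\nu(\Psi)=h_{\rm top}(\Psi)$; for any other $\mu'\in\cN$,
\[
        h_{\mu'}(\Phi)
        = h_{\pi_\ast\mu'}(\Psi)
        \le h_{\rm top}(\Psi)
        = h_{\mu_0}(\Phi),
\]
so $\mu_0$ realizes $\sup_{\cN}h_{(\cdot)}(\Phi)$.

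For uniqueness, let $\mu\in\cN$ be any measure with $h_\mu(\Phi)=h_{\mu_0}(\Phi)$. Running the chain in reverse, $h_{\pi_\ast\mu}(\Psi)=h_{\rm top}(\Psi)$, so $\pi_\ast\mu$ is a measure of maximal entropy lying in $\pi_\ast\cN$; by the assumed uniqueness of $\nu$, one obtains $\pi_\ast\mu=\nu$. Condition \ref{c:B} then supplies a Borel set $X^\#\subset X$ with $\nu(X^\#)=1$ on which $\pi$ has a single preimage. Setting $Y^\#\eqdef\pi^{-1}(X^\#)$, the restriction $\pi|_{Y^\#}\colon Y^\#\to X^\#$ is a Borel bijection of full measure for both $\mu$ and $\mu_0$, and so $\mu=\mu_0=(\pi|_{Y^\#})_\ast^{-1}\nu$. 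The main obstacle I anticipate is the measurable bookkeeping in this last step: one needs to verify that $(\pi|_{Y^\#})^{-1}$ is genuinely Borel on a full $\nu$-measure set (not merely set-theoretically defined), which follows by a standard measurable selection argument for continuous surjections between Polish spaces but requires some care; a secondary lighter point is that, because Ledrappier--Walters is stated for discrete time, one has to observe that the fiber $\pi^{-1}(\pi(y))$ in condition \ref{c:A} is the same object for the flow and for $\phi^1$, which is immediate from $\pi\circ\phi^t=\psi^t\circ\pi$.
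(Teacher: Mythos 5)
Your argument follows essentially the same route as the paper's: pass to the time-$1$ maps, use the Ledrappier--Walters formula together with condition \ref{c:A} to identify $h_\mu(\phi^1)$ with $h_{\pi_\ast\mu}(\psi^1)$ for $\mu\in\cN$, and use condition \ref{c:B} (injectivity of $\pi$ on a Borel set of full measure) to force uniqueness of the lift; the measurable bookkeeping you flag is resolved exactly as in the paper, since $\pi(Y_0)=\bigcap_n\{x\colon \diam(\pi^{-1}(x))\le 1/n\}$ is Borel and $Y_0=\pi^{-1}(\pi(Y_0))$. One genuine difference is that you take the lift $\mu_0$ directly from the hypothesis $\nu\in\pi_\ast\cN$, whereas the paper builds it as an ergodic component of a weak$^\ast$ limit of empirical measures along the orbit of a preimage of a $\nu$-generic point; given the hypothesis, your shortcut is legitimate and cleaner. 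The one substantive omission is that you only prove $\mu_0$ maximizes entropy \emph{among measures in $\cN$}, while the intended conclusion (see Claim \ref{cla:parrotc} in the paper's proof) is the stronger statement that $\mu_0$ is a measure of maximal entropy for $\Phi$ itself. That requires the additional observation $h_{\rm top}(\Phi)=h_{\rm top}(\Psi)$, obtained from Bowen's inequality $h_{\rm top}(\phi^1)\le h_{\rm top}(\psi^1)+\sup_{x\in X}h_{\rm top}(\phi^1,\pi^{-1}(x))$ \cite[Theorem 17]{Bow:71} together with the vanishing of the fiber entropies; with that line added, your chain gives $h_{\mu_0}(\phi^1)=h_\nu(\psi^1)=h_{\rm top}(\Psi)=h_{\rm top}(\Phi)$ and the proof is complete.
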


\begin{proof}
	Let $\nu$ be as above.  Note that $\cN\subset\cM_{\rm erg}(\Phi)$ implies that $\nu$ is $\Psi$-ergodic. Consider a $\nu$-generic point $x\in X$. Let $y\in \pi^{-1}(x)$. Consider the sequence
\[
	\mu_{y,T}
	\eqdef \frac1T\int_0^T\delta_{\phi^t(y)}\,dt, \quad T=1,2,3,\ldots.
\]	
This sequence has a subsequence which in the weak$\ast$ topology converges to a $\Phi$-invariant probability measure as $T\to\infty$. Let $\mu$ be any of its ergodic components. It is easy to see that $\pi_\ast\mu=\nu$. As in \cite{BuzFisSamVas:12}, we now prove that $\mu$ is the unique measure satisfying $\pi_\ast\mu=\nu$ and that it is of maximal entropy.

\begin{claim}\label{cla:parrotc}
	The measure $\mu$ is of maximal entropy (with respect to $\Phi$) and $h_\mu(\Phi)=h_\nu(\Psi)$.
\end{claim}	
	
\begin{proof}
	Since the flow $\Phi$ is a time-preserving extension of $\Psi$ and $\pi_\ast\mu=\nu$ we get
\[
	h_\mu(\phi^1)
	\ge h_\nu(\psi^1).
\]	
On the other hand, by \cite{LedWal:77} it holds
\[
	\sup_{\mu'\colon\pi_\ast\mu'=\nu}h_{\mu'}(\phi^1)
	= h_\nu(\psi^1)+\int_Xh_{\rm top}(\phi^1,\pi^{-1}(x))\,d\nu(x).
\]
By property \ref{c:A} of the entropy-preserving topological factor, the integral on the right hand side is equal to zero. This implies $h_\mu(\phi^1)=h_\nu(\psi^1)$.
By the fact that $\phi^1$ extends $\psi^1$ and by \cite[Theorem 17]{Bow:71}, it holds
\begin{equation}\label{extenentropy}
	h_{\rm top}(\psi^1)
	\le h_{\rm top}(\phi^1)
	\le h_{\rm top}(\psi^1) +\sup_{x\in X}h_{\rm top}(\phi^1,\pi^{-1}(x))
	= h_{\rm top}(\psi^1),
\end{equation}
where we again used \ref{c:A}. Hence, both inequalities are in fact equalities. Since $\nu$ is of maximal entropy, it follows
\[
	h_{\rm top}(\Phi)
	= h_{\rm top}(\phi^1)
	= h_{\rm top}(\psi^1)
	= h_{\rm top}(\Psi)
	= h_\nu(\psi^1)
	= h_\mu(\phi^1).
\qedhere\]
\end{proof}

\begin{claim}\label{cla:parrot}
	For every $\mu'\in \cN$ such that $\pi_\ast\mu'=\nu$, it holds $\mu'(Y_0)=1$, where 
\begin{equation*}
Y_0\eqdef \{y\in Y\colon \pi^{-1}(\pi(y))=\{y\}\}
\end{equation*}
 is a Borel set such that $\pi(Y_0)$ is also Borel. 
\end{claim}

\begin{proof}
	By definition, $\pi^{-1}(\pi(y))=\{y\}$ for all $y\in Y_0$. In particular, $Y_0$ is a Borel set and $\pi^{-1}(\pi(Y_0))= Y_0$. 
Furthermore,
\begin{equation*}
  \pi(Y_0) =  \{x\in X: \pi^{-1}(x) \text{ is a singleton}\}=\bigcap_{n=1}^\infty \{x\in X: \text{diam}(\pi^{-1}(x))\le 1/n \}
\end{equation*}
is clearly Borel. 
From hypothesis \ref{c:B} of the entropy-preserving topological factor and $\nu=\pi_\ast\mu'$, it follows
\[
	1
	= \nu(\pi(Y_0))
	= \pi_\ast\mu'(\pi(Y_0))
	= \mu'(\pi^{-1}(\pi(Y_0)))
	= \mu'(Y_0),
\]	
proving the first property.
\end{proof}

\begin{claim}\label{cla:parrotd}
	If $\mu'\in \cN$ and $\mu'\neq\mu$, 
then $h_{\mu'}(\phi^1)<h_\mu(\phi^1)$.
\end{claim}

\begin{proof}
Note that $\nu'\eqdef\pi_\ast\mu'$ is ergodic. If $\nu'=\nu$, then since $\pi^{-1}$ is injective over a $\nu$-full measure set by \ref{c:B}, it would follow $\mu'=\mu$. Hence $\nu'\neq\nu$. By hypothesis, $\nu$ is the unique measure of maximal entropy in $\pi_\ast\cN$, hence it  follows $h_{\nu'}(\psi^1)<h_\nu(\psi^1)$.
By \cite{LedWal:77}, together with hypothesis \ref{c:A}, it holds
\[
	h_{\mu'}(\phi^1)
	\le h_{\nu'}(\psi^1)+\int_Xh_{\rm top}(\phi^1,\pi^{-1}(x))\,d\nu'(x)
	= h_{\nu'}(\psi^1).
\]
The claim now follows from Claim \ref{cla:parrotc}.
\end{proof}	

Note that Claim \ref{cla:parrotd} yields Proposition \ref{prothe:2}.
\end{proof}

\begin{proposition}\label{procorprothe:2}
	Assume that $\Psi$ is a time- and entropy-preserving topological factor of $\Phi$ on $\cN\subset\cM_{\rm erg}(\Phi)$ through a continuous surjective map $\pi\colon Y\to X$.  If $\mu$ is the unique measure of maximal entropy (with respect to $\Phi$) and $\mu\in\cN$, then $\pi_\ast\mu$ is the unique measure of maximal entropy (with respect to $\Psi$).
\end{proposition}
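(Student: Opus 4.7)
The strategy is to reduce the claim to the uniqueness of the MME for $\Phi$ upstairs, proceeding in two short steps.

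\emph{Step 1 -- $\pi_*\mu$ is an MME of $\Psi$.} The pushforward $\pi_*\mu$ is $\Psi$-ergodic. I would apply the Ledrappier--Walters formula \cite{LedWal:77} to the factor map $\pi\colon(Y,\phi^1,\mu)\to(X,\psi^1,\pi_*\mu)$ to obtain
\[
h_\mu(\phi^1)\le h_{\pi_*\mu}(\psi^1)+\int_X h_{\rm top}(\phi^1,\pi^{-1}(x))\,d(\pi_*\mu)(x).
\]
Since $\mu\in\cN$, property \ref{c:A} of the entropy-preserving topological factor forces the integrand to vanish $(\pi_*\mu)$-almost everywhere, so the integral is zero and $h_\mu(\phi^1)\le h_{\pi_*\mu}(\psi^1)$. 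Combined with the general inequality $h_{\pi_*\mu}(\psi^1)\le h_\mu(\phi^1)$ valid for any factor, this gives equality. Since $\mu$ is MME of $\Phi$, the chain
\[
h_{\rm top}(\Phi)=h_\mu(\phi^1)=h_{\pi_*\mu}(\psi^1)\le h_{\rm top}(\Psi)\le h_{\rm top}(\Phi)
\]
forces $h_{\rm top}(\Phi)=h_{\rm top}(\Psi)$ and shows that $\pi_*\mu$ attains the topological entropy of $\Psi$; here the middle inequality is the variational principle for $\Psi$ and the last is the standard factor inequality for topological entropy.

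\emph{Step 2 -- uniqueness.} Let $\nu\in\cM(\Psi)$ satisfy $h_\nu(\psi^1)=h_{\rm top}(\Psi)$. Pick any $\Phi$-invariant lift $\mu'\in\cM(\Phi)$ with $\pi_*\mu'=\nu$; such a lift exists by a standard compactness argument, since $\pi_*\colon\cM(\Phi)\to\cM(\Psi)$ is a continuous affine surjection between the weak$^\ast$-compact spaces of invariant probability measures. The general factor entropy inequality yields
\[
h_{\mu'}(\phi^1)\ge h_{\pi_*\mu'}(\psi^1)=h_\nu(\psi^1)=h_{\rm top}(\Psi)=h_{\rm top}(\Phi),
\]
so $h_{\mu'}(\phi^1)=h_{\rm top}(\Phi)$ and $\mu'$ is an MME of $\Phi$. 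By the uniqueness hypothesis $\mu'=\mu$, hence $\nu=\pi_*\mu'=\pi_*\mu$.

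The technical heart is Step 1, where one must collapse the Ledrappier--Walters upper bound to an equality; this is precisely where the entropy-preserving condition \ref{c:A} enters. Step 2 should then be routine, a direct consequence of the uniqueness hypothesis together with the trivial existence of some $\Phi$-invariant lift of any invariant measure downstairs. Condition \ref{c:B} is not needed in this direction; it played its role in Proposition \ref{prothe:2}, where one had to distinguish different lifts of the same factor measure.
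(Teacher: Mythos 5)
Your proof is correct and follows essentially the same route as the paper's: Ledrappier--Walters together with condition \ref{c:A} gives $h_{\pi_\ast\mu}(\psi^1)=h_\mu(\phi^1)=h_{\rm top}(\Phi)=h_{\rm top}(\Psi)$, and uniqueness downstairs is reduced to uniqueness upstairs by lifting any competing measure of maximal entropy to an invariant measure of full entropy for $\Phi$. If anything, your derivation of $h_{\rm top}(\Psi)=h_{\rm top}(\Phi)$ via the variational principle and the standard factor inequality is slightly cleaner than the paper's appeal (through Claim \ref{cla:parrotc}) to Bowen's fiber-entropy bound, which involves the supremum of $h_{\rm top}(\phi^1,\pi^{-1}(x))$ over \emph{all} fibers rather than just $\pi_\ast\mu$-almost all of them.
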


\begin{proof}
As $\mu\in\cN$ is of maximal entropy for $\Phi$, arguing as in Claim \ref{cla:parrotc}, we get
\[
	h_{\rm top}(\Phi)
	= h_\mu(\phi^1)
	= h_{\pi_\ast\mu}(\psi^1)
	= h_{\rm top}(\Psi).
\]
Hence, $\pi_\ast\mu\in\pi_\ast\cN$ is of maximal entropy for $\Psi$. Arguing by contradiction, if there is a measure of maximal entropy $\nu'\ne\pi_\ast\mu$, then there is a measure $\mu'\ne\mu$ of maximal entropy for $\Phi$ satisfying $\nu'=\pi_\ast\mu'$. Contradiction. 
\end{proof}

\section{Time-changes}\label{sec:timcha}

In this section, let $(Y,d)$ be a compact metric space and $\Phi\colon Y\times\bR\to Y$ a continuous flow. Consider a positive continuous function $r\colon Y\to\bR_{>0}$.

Define the function $\ell\colon Y\times\bR\to\bR$ by requiring that for $y\in Y$ and $t\in\bR$ it holds
\begin{equation}\label{eq:elldefori}
	t
	= \int_0^{\ell(y,t)} r(\phi^s(y))\,ds.
\end{equation}
Note that the function $(y,t)\mapsto \ell(y,t)$ is continuous. Moreover, for every $y\in Y$ and $t_1,t_2\in\bR$ it satisfies  $\ell(y,t_1+t_2)=\ell(y,t_1)+ \ell(\phi^{t_1}(y),t_2)$. Hence, for all $y\in Y$ and $t\in\bR$ we have
$\ell(y,-t)=-\ell(\phi^{-t}(y),t)$, $\ell(y,0)=0$, and $\ell(y,t)> 0$ if $t> 0$.  Furthermore, for $y\in Y$ fixed, the function $t\mapsto\ell(y,t)$ is strictly increasing and it holds
\[
	\lim_{t\to\pm\infty}   \ell(y,t)= \pm\infty.
\]
Define  $\Phi_r\colon Y\times\bR\to Y$ by
\begin{equation}\label{eq:elldef}
	\Phi_r(y,\tau)=\phi^{\tau}_r(y)
	\eqdef \phi^{\ell(y,\tau)}(y), \quad \text{for }(y,\tau)\in Y\times\bR.
\end{equation}
We also write $\Phi_r=(\phi_r^\tau)_{\tau\in\bR}$. Observe that $\Phi_r$ is a continuous flow, known as the \emph{time-change of $\Phi$ by $r$}.
Note that for every $y\in Y$ the orbits of $y$ with respect to $\Phi$ and $\Phi_r$, that is, $(\phi^t_r(y))_{t\in\bR}$ and $(\phi^t(y))_{t\in\bR}$ coincide and have the same orientation. Moreover, $\Phi$ turns out to be the time-change of $\Phi_r$ by the function $1/r$, that is, we define the map $(y,t)\mapsto k(y,t)$ so that for every $y\in Y$ and $t\in\bR$ it holds
\[
	t
	= \int_0^{k(y,t)} \frac{1}{r(\phi^s_r(y))}\,ds.
\]
Then for every $y\in Y$ and $t\in\bR$ we have
\begin{align}
	\Phi(y,t)&=\phi^t(y)
	=\phi_r^{k(y,t)}(y), \label{eq:kdef}\\
	t
	&= k(y,\ell(y,t))
	= \ell(y,k(y,t)). \label{eq:krel}
\end{align}

\begin{remark}
Suppose that $Y$ is a manifold and the flow $\Phi$ is generated by the vector field $V$, that is, the curve $t\mapsto y(t)=\phi^t(y_0)$ $(t\in\bR)$, is the unique solution of the equation $dy/dt = V(y)$ with the initial condition $y(0)=y_0$. In this case, one can view the flow $\Phi_r$ as the flow generated by the vector field $W(y)=r(y)V(y)$. Since $W$ is a scalar multiple of $V$ the solutions curves stay the same, but the time parametrization (the speed along the curve) changes.
\end{remark}

\subsection{Bijection between spaces of (ergodic) invariant measures}

The time-changed flow $\Phi_r$ evolves at a different speed and \emph{a priori} possesses different invariant probability measures.
Given $\mu\in\cM_{\rm erg}(\Phi)$, the measure $\mu_r$ is a measure absolutely continuous with respect to $\mu$ with the Radon-Nikodym derivative given by
\begin{equation}\label{sua}
	\frac{d\mu_r}{d\mu} \eqdef \frac{r}{\int r\,d\mu}.
\end{equation}
The measure $\mu_r$  is an ergodic $\Phi_r$-invariant probability measure (\cite[Theorem 5.1]{Tot:66} or \cite[Theorem 3.5.4]{FisHas:19}).
The map $\mu\mapsto\mu_ r$ is a bijection between $\cM_{\rm erg}(\Phi)$ and $\cM_{\rm erg}(\Phi_r)$. By \eqref{sua}, for every continuous function $\varphi\colon Y\to\bR$ we have
\begin{equation}\label{defahh}
  \int_M \frac{\varphi}{ r} \,d\mu_ r
  = \frac{\int_M\varphi \,d\mu}{\int_M r \,d\mu}\,.
\end{equation}
See~\cite[Chapter 10]{CorFomSin:82}, \cite{Tot:66}, or \cite[Chapter 3.5]{FisHas:19} and references therein for more details.

For every $\mu\in\cM_{\rm erg}(\Phi)$, given $t>0$ such that $\mu$ is $\phi^t$-ergodic, by Abramov's formula \eqref{abra} together with  \cite[Theorem 10.1]{Tot:66}) we have
\begin{equation}\label{eq:AbTo}
	h_\mu(\Phi)
	=h_\mu(\phi^1)
	=\frac{1}{t}h_\mu(\phi^t)
	=\frac1th_{\mu_r}(\phi_r^t)\int r\,d\mu
	=h_{\mu_r}(\Phi_r)\int r\,d\mu.
\end{equation}

\subsection{Synchronizing time-changes}\label{secsyntimcha}

In this section, we use the particular time-change $r$ defined by \eqref{eq:T-for-Pnew} that ``synchronizes'' thermodynamical properties to prove Theorem \ref{thepro:2}.

We are now ready to show that for any hyperbolic potential an appropriately chosen time-change synchronizes any equilibrium state into a measure of maximal entropy. 

\begin{proof}[Proof of Theorem \ref{thepro:2}]
The function $r\colon Y\to\bR$ in \eqref{eq:T-for-Pnew} is continuous and positive.  Together with \eqref{eq:P} and standard properties of topological pressure (see \cite[Chapter 9.2]{Wal:82})  we have
\begin{equation}\label{eq:zero}
	P_{\rm top}(\Phi,-r)
	= 	P_{\rm top}\Big(\Phi,\frac1T\barf{T}- P_{\rm top}(\Phi,f)\Big)
	= 	P_{\rm top}(\Phi,\frac1T\barf{T})- P_{\rm top}(\Phi,f)
	= 0.
\end{equation}
Let us consider the time-change of $\Phi$ by $r$. For every $\mu\in\cM(\Phi)$ there is a unique associated measure $\mu_r\in\cM(\Phi_r)$ defined by~\eqref{sua}. Note that $r>0$ implies that the function $\bR\ni q\mapsto P_{\rm top}(\Phi,-q\,r)$ is monotonically decreasing. Hence by \eqref{eq:zero}, $q_0=1$ is its unique zero.

\begin{claim}
A $\Phi$-invariant measure $\bar\mu$ is an equilibrium state  for the potential $f$ (with respect to $\Phi$) if, and only if, $\bar\mu$ is an equilibrium state for the potential $-r$ (with respect to $\Phi$). 
\end{claim}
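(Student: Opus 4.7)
The plan is a direct computation using the variational principle together with the two identities already established before the Claim: the fact that $P_{\rm top}(\Phi,-r)=0$ from \eqref{eq:zero}, and the fact that for any $\Phi$-invariant measure $\mu$ one has $\int \bar f_0^t\,d\mu = t\int f\,d\mu$ from \eqref{eq:int-of-barphi}.

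First I would translate the condition of being an equilibrium state for $-r$ into a condition involving $f$. By the definition of $r$ in \eqref{eq:T-for-Pnew}, for every $\mu\in\cM(\Phi)$,
\[
	\int (-r)\,d\mu
	= -P_{\rm top}(\Phi,f) + \int \frac{1}{t}\bar f_0^t\,d\mu
	= -P_{\rm top}(\Phi,f) + \int f\,d\mu,
\]
using \eqref{eq:int-of-barphi} in the last step. Hence for every $\mu\in\cM(\Phi)$,
\[
	h_\mu(\phi^1) + \int(-r)\,d\mu
	= h_\mu(\phi^1) + \int f\,d\mu - P_{\rm top}(\Phi,f).
\]

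Next, the variational principle \eqref{varprinc} for both potentials combined with \eqref{eq:zero} gives
\[
	0 = P_{\rm top}(\Phi,-r)
	= \sup_{\mu\in\cM(\Phi)} \Big(h_\mu(\phi^1) + \int(-r)\,d\mu\Big),
\]
so that $\bar\mu$ is an equilibrium state for $-r$ exactly when the left–hand side above attains the value $0$. Substituting the previous display, this happens if and only if
\[
	h_{\bar\mu}(\phi^1) + \int f\,d\bar\mu
	= P_{\rm top}(\Phi,f),
\]
which, again by the variational principle for $f$, is precisely the condition that $\bar\mu$ is an equilibrium state for $f$ with respect to $\Phi$. This establishes the equivalence in both directions simultaneously.

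There is no real obstacle here: the Claim is essentially a bookkeeping consequence of the normalization built into the definition of $r$ together with \eqref{eq:zero}. The only point worth double-checking is that the identity $\int r^{-1} \bar f_0^t\,d\mu$ is not what is needed (that would involve the time-changed measure $\mu_r$); here we use only the plain $\Phi$-invariance of $\mu$, which is why the computation reduces cleanly to \eqref{eq:int-of-barphi}.
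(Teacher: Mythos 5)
Your argument is correct and is essentially the same as the paper's: both proofs rewrite $-r$ as $\tfrac1T\bar f_0^T - P_{\rm top}(\Phi,f)$, invoke \eqref{eq:int-of-barphi} to replace $\int \tfrac1T\bar f_0^T\,d\bar\mu$ by $\int f\,d\bar\mu$, and then compare the two variational characterizations using $P_{\rm top}(\Phi,-r)=0$ from \eqref{eq:zero}. No gaps; your closing remark correctly identifies that only plain $\Phi$-invariance (not the time-changed measure) is needed here.
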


\begin{proof}
 Since $\frac1T\barf{T}=P_{\rm top}(\Phi,f)-r$, from \eqref{eq:int-of-barphi} we see that $\bar\mu$ is an equilibrium state for $f$ (with respect to $\Phi$) if and only if
\begin{equation}\label{eq:IF}
  P_{\rm top}(\Phi,f)
	= h_{\bar\mu}(\phi^1)+\int f\,d\bar\mu
	= h_{\bar\mu}(\phi^1)+\int(P_{\rm top}(\Phi,f)-r)\,d\bar\mu.
\end{equation}
On the other hand, \eqref{eq:zero} implies that $\bar\mu$	is an equilibrium state for the potential $-r$ (with respect to $\Phi$)
if and only if
\begin{equation}\label{eq:ONLY-IF}
	P_{\rm top}(\Phi,-r)
	=0
	= h_{\bar\mu}(\phi^1)+\int(-r)\,d\bar\mu.
\end{equation}
Since \eqref{eq:IF} and \eqref{eq:ONLY-IF} are equivalent, the claim follows.
\end{proof}

\begin{claim}
	We have $P_{\rm top}(\Phi,-r)=0=P_{\rm top}(\Phi_r,-1)$.
\end{claim}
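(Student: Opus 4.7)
The first equality $P_{\rm top}(\Phi,-r)=0$ was already established in \eqref{eq:zero}, so the remaining content is the second equality $P_{\rm top}(\Phi_r,-1)=0$. By the variational principle \eqref{varprinc} applied to the constant potential $-1$ with respect to the flow $\Phi_r$, this is equivalent to showing that $h_{\rm top}(\Phi_r)=1$, since
\[
	P_{\rm top}(\Phi_r,-1)
	= \sup_{\nu\in\cM_{\rm erg}(\Phi_r)}\Big(h_\nu(\phi_r^1)+\int(-1)\,d\nu\Big)
	= h_{\rm top}(\Phi_r)-1.
\]

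The plan is to transport the variational identity $P_{\rm top}(\Phi,-r)=0$ across the time-change bijection. Recall from Section \ref{sec:timcha} that $\mu\mapsto\mu_r$ defined by \eqref{sua} is a bijection between $\cM_{\rm erg}(\Phi)$ and $\cM_{\rm erg}(\Phi_r)$, and the Abramov--Totoki formula \eqref{eq:AbTo} yields $h_\mu(\Phi)=h_{\mu_r}(\Phi_r)\int r\,d\mu$. Hence, for every $\mu\in\cM_{\rm erg}(\Phi)$,
\[
	h_\mu(\Phi)+\int(-r)\,d\mu
	= h_{\mu_r}(\Phi_r)\int r\,d\mu-\int r\,d\mu
	= \bigl(h_{\mu_r}(\Phi_r)-1\bigr)\int r\,d\mu.
\]
Since $Y$ is compact and $r$ is continuous and strictly positive, there exist constants $0<r_{\min}\le r_{\max}<\infty$ with $r_{\min}\le\int r\,d\mu\le r_{\max}$ for every $\mu\in\cM(\Phi)$.

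Combining the display above with the variational principle for $P_{\rm top}(\Phi,-r)$ over ergodic measures and \eqref{eq:zero},
\[
	0
	= P_{\rm top}(\Phi,-r)
	= \sup_{\mu\in\cM_{\rm erg}(\Phi)}\bigl(h_{\mu_r}(\Phi_r)-1\bigr)\int r\,d\mu.
\]
The bounds on $\int r\,d\mu$ and the bijectivity of $\mu\mapsto\mu_r$ imply, on the one hand, that $h_\nu(\phi_r^1)\le 1$ for every $\nu\in\cM_{\rm erg}(\Phi_r)$ (so $h_{\rm top}(\Phi_r)\le 1$), and, on the other hand, the existence of a maximizing sequence $(\mu_n)_n\subset\cM_{\rm erg}(\Phi)$ with $\bigl(h_{(\mu_n)_r}(\Phi_r)-1\bigr)\int r\,d\mu_n\to 0$; since $\int r\,d\mu_n\ge r_{\min}>0$, this forces $h_{(\mu_n)_r}(\Phi_r)\to 1$, giving $h_{\rm top}(\Phi_r)\ge 1$. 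Hence $h_{\rm top}(\Phi_r)=1$ and therefore $P_{\rm top}(\Phi_r,-1)=0$, completing the proof. The only subtle point is the uniform bound $\int r\,d\mu\ge r_{\min}>0$ needed to pass from an inequality for the product to an inequality for the entropy factor; this is immediate from compactness of $Y$ and positivity of $r$.
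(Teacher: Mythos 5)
Your proof is correct and follows essentially the same route as the paper: both rest on the identity $h_\mu(\phi^1)-\int r\,d\mu=\bigl(h_{\mu_r}(\phi_r^1)-1\bigr)\int r\,d\mu$ coming from \eqref{defahh} and \eqref{eq:AbTo}, the bijection $\mu\mapsto\mu_r$ between ergodic measures, the variational principle, and the uniform positive bounds on $r$ from compactness. The paper phrases the conclusion as a two-sided sandwich of $P_{\rm top}(\Phi_r,-1)$ between $\frac{1}{\max r}P_{\rm top}(\Phi,-r)$ and $\frac{1}{\min r}P_{\rm top}(\Phi,-r)$ rather than via $h_{\rm top}(\Phi_r)=1$, but this is only a cosmetic difference.
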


\begin{proof}
Let $\mu$ be a $\Phi$-invariant ergodic measure and let $\mu_r$ be the associated $\Phi_r$-invariant ergodic measure as defined in~\eqref{sua}.
By \eqref{defahh} and Abramov's formula for entropy~\eqref{eq:AbTo} we have 
\[
	h_{\mu}(\phi^1)-\int r\,d\mu
	= \left(h_{\mu_r}(\phi_r^1)
		- \int 1\,d\mu_r\right)\cdot\left(\int r\,d\mu\right).
\]
Using that $r>0$, the fact that $\mu\mapsto\mu_r$ is a bijection between $\Phi$-ergodic and $\Phi_r$-ergodic measures, and by the variational principle \eqref{varprinc}, it follows
\[\begin{split}
	P_{\rm top}(\Phi_r,-1)
	&= \sup_{\mu_r}\left(h_{\mu_r}(\phi_r^1)-\int1\,d\mu_r\right)
	\ge \frac{1}{\max r}\sup_\mu\left(h_\mu(\phi^1)-\int r\,d\mu\right)\\
	&= \frac{1}{\max r} P_{\rm top}(\Phi,-r)
	=0.
\end{split}\]
On the other hand
\[\begin{split}
	0
	&= P_{\rm top}(\Phi,-r)
	=\sup_\mu\left(h_\mu(\phi^1)-\int r\,d\mu\right)\\
	&\ge \min r\cdot \sup_{\mu_r}\left(h_{\mu_r}(\phi_r^1)-\int 1\,d\mu_r\right)
	\ge \min r\cdot P_{\rm top}(\Phi_r,-1).
\qedhere\end{split}\]
\end{proof}

Furthermore, $\bar\mu$ is an ergodic equilibrium state for the potential $-r$ (with respect to $\Phi$) if, and only if,  $\bar\mu_r$ is an ergodic equilibrium state for the constant potential $f\equiv-1$ (with respect to $\Phi_r$). Equilibrium states for the constant potential $f\equiv-1$ coincides with measures of maximal entropy (with respect to $\Phi_r$). Summing up, to any equilibrium state $\bar\mu$ for $f$ (with respect to $\Phi$) the associated time-changed measure $\bar\mu_r$ is a measure of maximal entropy for $\Phi_r$. Since the time-change $r$ induces a bijection between the two spaces of ergodic probability measures, $\cM_{\rm erg}(\Phi)$ and $\cM_{\rm erg}(\Phi_r)$, we conclude that the latter measure is unique if and only if the former one is unique. This proves the theorem.
\end{proof}

\begin{remark}
It is an immediate consequence of \eqref{defahh} and \eqref{eq:AbTo} that for any  continuous potential $f\colon Y\to\bR$, any $T>0$, and any time-change $r\colon Y\to \bR_{>0}$ we have
\[
	P_{\rm top}\Big(\Phi_r,\frac{\frac1T\barf{T}-P_{\rm top}(\Phi,f)}{r}\Big)
	= 0 .
\]	
Hence, if $f$ is hyperbolic and $T>0$ is such that $r=P_{\rm top}(\Phi,f)-\frac1T\barf{T}>0$, then it holds
\[
	0 = P_{\rm top}\Big(\Phi_r,q\frac{\frac1T\barf{T}-P_{\rm top}(\Phi,f)}{r}\Big)
	= P_{\rm top}(\Phi_r,-q)
\]
if and only if $q=1$. 
\end{remark}

\begin{remark}[Synchronization]
The idea of synchronization is not new. To the best of our knowledge, the first reference is Parry~\cite{Par:86} where he synchronized canonical measures of a certain Anosov flow to obtain a new Anosov flow with constant Lyapunov exponents. See also \cite[Section 2]{Sam:14}. The idea here is similar, even though we cannot guarantee that the time-changed flow is smooth enough to talk about Lyapunov exponents.
\end{remark}

\subsection{Time-change in a TMF}
The time-change of a TMF $\Theta_{\Sigma,R}$ by a continuous function $r\colon \Sigma_R\to\bR_{>0}$
	is the TMF $\Theta_{\Sigma,R'}$, where
\begin{equation}\label{defRdash}
	R'(\uv)
	\eqdef \Delta r(\uv)
	= \int_0^{R(\uv)}r(\theta_{\Sigma,R}^s(\uv,0))\,ds
	= \int_0^{R(\uv)}r(\uv,s)\,ds.
\end{equation}
\begin{lemma}\label{lemtimTMF}
With the notation introduced above, if $R,r$ are H\"older continuous and bounded, then $R'$ is H\"older continuous.
\end{lemma}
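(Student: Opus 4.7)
My plan is to bound $|R'(\uv)-R'(\uw)|$ for $\uv,\uw\in\Sigma$ by a Hölder power of the symbolic distance $d(\uv,\uw)$. Without loss of generality assume $R(\uv)\ge R(\uw)$, and split
\[
R'(\uv)-R'(\uw)=\int_0^{R(\uw)}\bigl(r(\uv,s)-r(\uw,s)\bigr)\,ds+\int_{R(\uw)}^{R(\uv)}r(\uv,s)\,ds.
\]
The second integral is bounded by $\lVert r\rVert_\infty\,|R(\uv)-R(\uw)|$, and since $R$ is $\alpha_R$-Hölder on $\Sigma$, this term is controlled by a constant times $d(\uv,\uw)^{\alpha_R}$. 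Note that in this range $(\uv,s)$ and $(\uw,s)$ are valid canonical representatives of points in $\Sigma_R$, so the integrand in the first integral is well-defined.

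For the first integral, at each common height $s\in[0,R(\uw))$ I would invoke the $\alpha_r$-Hölder continuity of $r$ with respect to the Bowen--Walters metric $d_{\Sigma,R}$. The key auxiliary estimate is that there exist constants $C,\gamma>0$ depending only on the bounds and the Hölder modulus of $R$ such that
\[
d_{\Sigma,R}\bigl((\uv,s),(\uw,s)\bigr)\le C\,d(\uv,\uw)^{\gamma}
\]
for all such $s$. Combined with the Hölder estimate on $r$, this gives $|r(\uv,s)-r(\uw,s)|\le C'\,d(\uv,\uw)^{\gamma\alpha_r}$, and integrating in $s\in[0,R(\uw)]$, over an interval of length at most $\lVert R\rVert_\infty$, yields a similar Hölder bound for the first integral. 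Combining the two pieces, $R'$ is Hölder continuous on $\Sigma$ with exponent $\min(\alpha_R,\gamma\alpha_r)$.

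The main technical step, and the place where most of the bookkeeping is concentrated, is the estimate on $d_{\Sigma,R}((\uv,s),(\uw,s))$. Unwinding the definition of the Bowen--Walters metric from \cite{BowWal:72}, I would construct an explicit short path joining the two points in three pieces: first flow along the suspension from $(\uv,s)$ to $(\uv,sR(\uw)/R(\uv))$, whose cost is proportional to $s\,|R(\uv)-R(\uw)|/R(\uv)$ and hence Hölder in $d(\uv,\uw)$ thanks to the bounds on $R$ and its Hölder continuity; second, connect $(\uv,sR(\uw)/R(\uv))$ to $(\uw,sR(\uw)/R(\uv))$ at the same relative height, an operation whose Bowen--Walters cost is of order $d(\uv,\uw)$ by the construction of the metric on unit-height slices; and third, flow back up to $(\uw,s)$ with cost again controlled by $|R(\uv)-R(\uw)|$. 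The hypothesis that $R$ is bounded both from above and away from zero is essential to obtain uniform constants in this construction, while the Hölder modulus of $R$ controls the vertical pieces. Once this geometric estimate is in hand, the remainder of the argument is routine.
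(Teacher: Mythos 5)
Your proof is correct and follows essentially the same route as the paper: split the difference of the two integrals into the common range plus the tail, bound the tail by $\lVert r\rVert_\infty\,|R(\uv)-R(\uw)|$ and the Hölder continuity of $R$, and bound the common-range integrand by the Hölder continuity of $r$. You are in fact slightly more careful than the paper on one point --- the paper silently applies the bound $|r(\uv,s)-r(\uw,s)|\le {\rm Hol}(r)\,d(\uv,\uw)^\beta$, whereas you correctly observe that $r$ is Hölder with respect to the Bowen--Walters metric on $\Sigma_R$ and supply the needed comparison $d_{\Sigma,R}\bigl((\uv,s),(\uw,s)\bigr)\le C\,d(\uv,\uw)^{\gamma}$, which is exactly the (true, and routine) estimate that makes the paper's one-line step legitimate.
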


\begin{proof}
Using  \eqref{eq:elldefori} it is easy to see that \eqref{defRdash} defines a suspension space homeomorphic to the initial one.

Now assume that $R,r$ are both H\"older continuous and bounded.
Without loss of generality, taking the minimal H\"older exponent,
we can assume that there is $\beta\in (0,1)$ so that $R,r$ are simultaneously $\beta$-H\"older continuous. Let
$\|\cdot \|_0$ and ${\rm Hol}_\beta(\cdot)$ denote the $C^0$ norm and the $\beta$-H\"older constant, respectively.
Fix $\uv,\uw\in\Sigma$. Assuming that $R(\uv)\leq R(\uw)$, we have that
\[\begin{split}
&\, |R'(\uv)-R'(\uw)|=\left|\int_0^{R(\uv)}r(\uv,s)\,ds-\int_0^{R(\uw)}r(\uw,s)\,ds\right|\\
&\leq \int_0^{R(\uv)}|r(\uv,s)-r(\uw,s)|\,ds+\int_{R(\uv)}^{R(\uw)}|r(\uw,s)|\,ds\\
&\leq |R(\uv)|\cdot{\rm Hol}_\beta(r) d(\uv,\uw)^\beta+|R(\uv)-R(\uw)|\cdot\|r\|_0\\
&\leq \left[\|R\|_0{\rm Hol}_\beta(r)+\|r\|_0 {\rm Hol}_\beta(R)\right]d(\uv,\uw)^\beta,
\end{split}\]
thus completing the proof.
\end{proof}

\section{Topological dynamics}\label{sec:topdyn}

 Unless stated otherwise, in this section we consider a continuous fixed-point free flow $\Psi\colon  X\times\bR\to X$ on a compact metric space $(X,d)$, that is, $(x,t)\mapsto \Psi(x,t)=\psi^t(x)$ is a continuous map such that $\psi^{t+s}(x)=\psi^s(\psi^t(x))$  for every $x\in X$, for every $s,t\in\bR$, and $\psi^\tau(x)\ne x$ for some $\tau\ne 0$. We denote by $\Psi^{-1}$ the time-reversed flow $\Psi^{-1}(x,t)=\psi^{-t}(x)$. To avoid some trivialities, we assume that $X$ is \emph{flow perfect},
   that is, no open subset of $X$ is homeomorphic to an open interval in $\bR$ \cite[Definition 1.6.8]{FisHas:19}.

\subsection{Expansivity}

The flow $\Psi$ is \emph{expansive} if for every $\varepsilon>0$ there exists $\delta>0$ with the property that if $x \in X$ and $y \in X$ is a point for which there exists an increasing homeomorphism $\rho\colon \bR \to \bR$ satisfying $\rho(0)=0$ and such that
\[
 	d(\psi^{\rho(t)}(y),\psi^t(x)) 
	\leq \delta
	\text{ for every $t \in {\mathbb R}$,}
\]	
then $y=\psi^{t(y)}(x)$ for some $\lvert t(y) \rvert \le \varepsilon$ (see \cite[Theorem 3]{BowWal:72} for equivalent definitions).%
\footnote{Note that the notion we call \emph{expansivity} is also called \emph{Komuro expansivity}, cf. \cite[Section 1.7]{FisHas:19}, and some authors define  \emph{expansivity} in a different way, cf. \cite[Definition 1.7.2]{FisHas:19}. For fixed-point-free flows these definitions are equivalent \cite[Theorem 1.7.5]{FisHas:19}.}

\subsection{Mixing and transitivity}

The flow $\Psi$ is \emph{topologically mixing} if for every nonempty open sets $U,V\subset X$ there exists $T>0$ such that $\psi^t(U)\cap V\ne\emptyset$ for every $t\ge T$.

The flow $\Psi$ is \emph{topologically transitive} if for every nonempty open sets $U,V\subset X$ there is $t\in\bR$ such that $\psi^t(U)\cap V\ne\emptyset$. Recall that, under our hypothesis that $X$ is a flow-invariant perfect compact metric space,  $\Psi$ is topologically transitive if, and only if, there exists a dense orbit (see, for example, \cite[Proposition 1.6.9]{FisHas:19}). 

It follows that in our setting, topological transitivity is invariant under time-changes. Unfortunately, this is no longer true for topological mixing.

\subsection{Local product structure}\label{sec:lps}

Given $x\in X$, define the \emph{strong stable set} of $x$ by
\begin{equation}\label{eq:defstrstaset}
	\cW^\s(\Psi,x)
	\eqdef \{y\in X\colon d(\psi^t(y),\psi^t(x))\to0\text{ as }t\to\infty\}
\end{equation}
and given $\varepsilon>0$ also define
\[
	\cW^\s_\varepsilon(\Psi,x)
	\eqdef \{y\in \cW^\s(\Psi,x)\colon d(\psi^t(y),\psi^t(x))\le\varepsilon\text{ for all }t\ge0\}.
\]
Define the \emph{strong unstable set} by $\cW^\u(\Psi,x)\eqdef \cW^\s(\Psi^{-1},x)$ and let $\cW^\u_\varepsilon(\Psi,x)\eqdef \cW^\s_\varepsilon(\Psi^{-1},x)$.
Observe that for $x\in X$ and $\ddag \in\{\s,\u\}$ we have
\begin{equation}\label{eq:extmanifol}
	\cW^\ddag(\Psi,y)
	=\cW^\ddag(\Psi,x)
	\quad\text{ for every }y\in\cW^\ddag(\Psi,x),
\end{equation}
and
\begin{equation}\label{eq:invmanifol}
	\psi^t\big(\cW^\ddag(\Psi,x)\big)
	=\cW^\ddag(\Psi,\psi^t(x))
	\quad\text{ for every }t\in\mathbb{R}.
\end{equation}

The flow $\Psi$ has a \emph{local product structure} if for every $\varepsilon>0$ there is $\delta>0$ such that for every $x,y\in X$ with $d(x,y)\le\delta$ there is a unique $\tau=\tau(x,y)\in\bR$ with $\lvert\tau\rvert\le\varepsilon$ such that the set
\[
	 \cW^\s_\varepsilon(\Psi,\psi^\tau(x))\cap \cW^\u_\varepsilon(\Psi,y)
\]
is nonempty and consists of a single point that we denote $[x,y]$.

The following result is an immediate consequence of the uniform continuity of $\Psi$, we state it for further reference.

\begin{lemma}\label{ref:reverse}
	If $\Psi$ has a local product structure, then for every $\varepsilon>0$ there is $\delta>0$ such that for every $x,y\in X$ with $d(x,y)\le\delta$ there is a unique $\tau=\tau(x,y)\in\bR$ with $\lvert\tau\rvert\le\varepsilon$ satisfying
\[
	 \cW^\u_\varepsilon(\Psi,\psi^\tau(x))\cap \cW^\s_\varepsilon(\Psi,y)
	\ne\emptyset.
\]
\end{lemma}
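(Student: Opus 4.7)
The plan is to reduce Lemma \ref{ref:reverse} to the defining property of local product structure by applying it to the pair $(y,x)$ instead of $(x,y)$ and then using the flow to swap the roles of stable and unstable sets. The cost of using the flow is a small time shift, and this is absorbed by uniform continuity.

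First I would shrink the parameter. Given $\varepsilon>0$, by uniform continuity of $\Psi$ on the compact set $X\times[-\varepsilon,\varepsilon]$, choose $\varepsilon'\in(0,\varepsilon)$ such that $d(a,b)\le\varepsilon'$ and $|s|\le\varepsilon$ imply $d(\psi^s(a),\psi^s(b))\le\varepsilon$. Apply the definition of local product structure with parameter $\varepsilon'$ to obtain $\delta>0$ with the property that whenever $d(x',y')\le\delta$, there is a unique $\tau'\in\bR$ with $|\tau'|\le\varepsilon'$ such that $\cW^\s_{\varepsilon'}(\Psi,\psi^{\tau'}(x'))\cap\cW^\u_{\varepsilon'}(\Psi,y')$ is a single point.

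Now suppose $d(x,y)\le\delta$. Apply the above with the swapped pair $(x',y')=(y,x)$ to produce $\tau'$ with $|\tau'|\le\varepsilon'$ and a unique point $z\in\cW^\s_{\varepsilon'}(\Psi,\psi^{\tau'}(y))\cap\cW^\u_{\varepsilon'}(\Psi,x)$. Set $\tau\eqdef-\tau'$ and $w\eqdef\psi^\tau(z)$. Using the flow-equivariance \eqref{eq:invmanifol} I get $w\in\cW^\s(\Psi,y)\cap\cW^\u(\Psi,\psi^\tau(x))$, and the bound $|\tau|\le\varepsilon'\le\varepsilon$ is immediate. It remains to upgrade membership to the $\varepsilon$-local sets. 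For the stable direction, $\psi^t(w)=\psi^{t-\tau'}(z)$ and $\psi^t(y)=\psi^{t-\tau'}(\psi^{\tau'}(y))$; for $t\ge\max\{0,\tau'\}$ I have $t-\tau'\ge0$ and the bound $\varepsilon'<\varepsilon$ follows directly from $z\in\cW^\s_{\varepsilon'}(\Psi,\psi^{\tau'}(y))$, while for the (finite) remaining values of $t$ the argument $s=t-\tau'$ lies in $[-\varepsilon,\varepsilon]$ and the choice of $\varepsilon'$ upgrades $d(z,\psi^{\tau'}(y))\le\varepsilon'$ to $d(\psi^s(z),\psi^s(\psi^{\tau'}(y)))\le\varepsilon$. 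The unstable direction is handled symmetrically by considering negative times.

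For uniqueness, if $\tau_1\ne\tau_2$ with $|\tau_i|\le\varepsilon$ both produced points $w_i\in\cW^\u_\varepsilon(\Psi,\psi^{\tau_i}(x))\cap\cW^\s_\varepsilon(\Psi,y)$, then $\psi^{-\tau_i}(w_i)$ would give two distinct pairs $(-\tau_i,\psi^{-\tau_i}(w_i))$ realizing the original local product structure for $(y,x)$ with the larger parameter $\varepsilon$ in place of $\varepsilon'$, contradicting the uniqueness built into the definition (applied with $\varepsilon$, after a further, analogous shrinking argument if needed).

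The only real obstacle is the bookkeeping of the two time intervals and two scales $\varepsilon'<\varepsilon$; once the uniform continuity modulus is fixed up front, everything else is a direct unpacking of \eqref{eq:extmanifol} and \eqref{eq:invmanifol}, which is why the lemma is stated as an immediate consequence.
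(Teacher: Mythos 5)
Your proof is correct and follows exactly the route the paper intends: the paper states this lemma without proof, calling it ``an immediate consequence of the uniform continuity of $\Psi$'', and your argument (apply the local product structure to the swapped pair $(y,x)$, flow the resulting point by $-\tau'$, and absorb the bounded time shift via a uniform-continuity modulus fixed in advance) is precisely the elaboration being omitted. The only blemish is the word ``finite'' for the remaining values of $t$, which form a bounded interval $[0,\tau')$ rather than a finite set; the uniform-continuity bound handles them all the same.
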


\subsection{Orbit closing property}

A flow $\Psi$ on a compact metric space $X$ has the \emph{orbit closing property} if
for every $\eps > 0$, there exist $\delta>0$ and $T>0$ such that for every $x\in X$
and $t > T$ with $d(x,\psi^t(x))\le\delta$, there exist $y\in X$ and $\ell>0$ satisfying
$|\ell-t|\le\eps$, $\psi^\ell(y) = y$, and $d(\psi^s(x), \psi^s(y))\le \eps$ for $0\le s \le \min\{t,\ell\}$.%
\footnote{Note that some authors use the notion of \emph{flows satisfying closing lemma} (see \cite{CouSch:10, CouSch:14}), which is defined in a slightly different way than above. For a compact flow both versions are easily seen to be equivalent, while for non-compact spaces the flow with the orbit closing property must also  satisfy the closing lemma, but the converse is not necessarily true.}

\subsection{Pseudo-orbit-tracing and closing properties}

Given positive numbers $T$ and $\delta$, we call a sequence of pairs $(x_k,\tau_k)_{k=k_0}^{k_1}$ of points $x_k\in X$ and positive numbers (we allow $k_0=-\infty$ or $k_1=\infty$), a \emph{$(\delta,T)$-pseudo orbit} for the flow $\Psi$ if for every $k\in\bZ$ with $k_0\le k <k_1$ we have $\tau_k\ge T$ and $d(\psi^{\tau_k}(x_k),x_{k+1})\le\delta$. We call a pseudo orbit $(x_k,\tau_k)_{k=-\infty}^{\infty}$ \emph{periodic} if both sequences $(x_k)_{k=-\infty}^{\infty}$ and $(\tau_k)_{k=-\infty}^{\infty}$ are periodic.

Given a sequence $(\tau_k)_{k=-\infty}^{\infty}$, we set $s_0=0$ and for every $k\in\bN$ we define $s_k=\tau_0+\ldots+\tau_{k-1}$, and $s_{-k}=\tau_{-k}+\ldots+\tau_{-1}$.
A $(\delta,T)$-pseudo orbit $(x_k,\tau_k)_{k=-\infty}^\infty$ is \emph{$\varepsilon$-traced} by an orbit $(\psi^t(y))_{t\in \bR}$ of  $y\in X$ if there is some increasing continuous injection with continuous inverse $\rho\colon\bR\to\bR$ satisfying $\rho(0)=0$ and the following:
\begin{enumerate}
  \item for $k=0,1,\ldots$ and for every $s\ge0$ satisfying $s_k\le s<s_{k+1}$ we have
\[
	d(\psi^{\rho(s)}(y),\psi^{s-s_k}(x_k))\le\varepsilon,
\]
  \item for $k=1,2,\ldots$ and for every $s\le0$ satisfying $-s_{-k}\le s< -s_{-k+1}$ we have
\[
	d(\psi^{\rho(s)}(y),\psi^{s+s_{-k}}(x_{-k}))\le\varepsilon.
\]
\end{enumerate}
The flow $\Psi$ has the \emph{pseudo-orbit-tracing property} if for every $\varepsilon>0$  and $T>0$ there is $\delta>0$ such that every $(\delta,T)$-pseudo orbit is $\varepsilon$-traced by a true orbit of $\Psi$. Note that the pseudo-orbit-tracing property is preserved by time-changes. It is also clear that if the flow $\Psi$ has the pseudo-orbit-tracing property, then every finite $(\delta,T)$-pseudo-orbit $(x_k,\tau_k)_{k=k_0}^{k_1}$ can be extended to full pseudo-orbit $(x_k,\tau_k)_{k=-\infty}^\infty$ and traced. By \cite[Thm. 4]{Kom:84}, if the  flow has no fixed points, then we may assume that $\rho$ appearing in the definition of $\eps$-tracing is onto or even that it satisfies
\begin{equation}\label{eq:Komuro}
\left\lvert\frac{\rho(s)-\rho(t)}{s-t}-1\right\rvert<\eps\quad\text{ for every }s,t\in\mathbb{R},\, s\neq t.
\end{equation}
It is also not hard to see that if for every $\varepsilon>0$  there is $\delta>0$ such that every $(\delta,1)$-pseudo orbit is $\varepsilon$-traced by a true orbit of $\Psi$, then $\Psi$ has the pseudo-orbit-tracing property.

By choosing $\rho$ that satisfies \eqref{eq:Komuro} in the definition of the pseudo-orbit-tracing property and using expansiveness, one immediately obtains that
if $\Psi\colon X\times\bR\to X$ is an expansive, fixed point free flow on a compact metric space with the pseudo-orbit-tracing property, then $\Psi$ also satisfies the following weak version of the closing property: for every $\eps > 0$, there exist $\delta>0$ and $T>0$ such that for every $x\in X$ and $t > T$ with $d(x,\psi^t(x))<\delta$, there exist $y\in X$ and $\ell>0$ satisfying
$|\ell-t|<\eps t$, $\psi^\ell(y) = y$, and $d(\psi^s(x), \psi^s(y)) < \eps$ for $0 < s < \min\{t,\ell\}$. Obtaining a stronger conclusion about the period of the periodic point $y$ requires more work (cf. \cite[Remark 6.2.5]{FisHas:19}). In particular, we use some auxiliary results from \cite{Tho:91}, which are valid for flows with the local product structure.

We now are ready to give the following proof.

\begin{proof}[Proof of Theorem \ref{thmprolem:pseudoorbittracing}]
First, by {\cite[Theorem 7.1]{Tho:91}} $\Psi$ has the pseudo-orbit-tracing property.

By hypotheses, $\Psi$ has a local product structure and hence we can apply the following result, which combines \cite[Lemma 3.1 and Proposition 3.2]{Tho:91}.

\begin{claim}\label{cla:preper}
For every $\beta>0$, there exists $\gamma=\gamma(\beta)>0$ such that the following is true: For every $x,y\in X$, every increasing homeomorphism $\rho\colon \bR\to\bR$ satisfying $\rho(0)=0$, and every interval $[T_1,T_2]$ containing $0$ it holds that if $d(\psi^{\rho(s)}(y),\psi^s(x))\le\gamma$ for every $s\in[T_1,T_2]$ then $\lvert\rho(s)-s\rvert\le\beta$ for every $s\in[T_1,T_2]$.
\end{claim}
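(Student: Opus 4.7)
The plan is to argue by contradiction, combining local product structure to extend closeness of orbits from the given interval to all of $\bR$, and then applying expansivity to bound the total reparametrization shift. Fix $\beta>0$. Choose $\varepsilon_0<\beta/4$ and the matching $\delta_0>0$ from local product structure, so that any $a,b\in X$ with $d(a,b)\le\delta_0$ admit a unique flow-direction shift $\tau(a,b)\in[-\varepsilon_0,\varepsilon_0]$ and a bracket point $[a,b]\in \cW^\s_{\varepsilon_0}(\Psi,\psi^{\tau(a,b)}(a))\cap \cW^\u_{\varepsilon_0}(\Psi,b)$. Also let $\eta=\eta(\beta/2)>0$ be the expansivity constant at scale $\beta/2$, and take $\gamma$ smaller than $\delta_0$ and than the uniform-continuity thresholds needed below. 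Assume the claim fails for this $\beta$: there exist $\gamma_n\downarrow 0$ and data $(x_n,y_n,\rho_n,[T_1^n,T_2^n])$ satisfying the hypotheses at scale $\gamma_n$ but with some $s_n\in[T_1^n,T_2^n]$ at which $|\rho_n(s_n)-s_n|>\beta$. Since $s\mapsto\rho_n(s)-s$ is continuous and vanishes at $0$, the intermediate value theorem produces a first $t_n$ between $0$ and $s_n$ at which the absolute value equals $\beta$; without loss of generality $\rho_n(t_n)-t_n=\beta$ and $|\rho_n(s)-s|\le\beta$ for $s\in[0,t_n]$.

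At the two ``endpoint'' pairs $(x_n,y_n)$ and $(\psi^{t_n}(x_n),\psi^{t_n+\beta}(y_n))$, each of which is $\gamma_n$-close and hence at distance $\le\delta_0$, apply local product structure to produce bracket points $p_n^0,p_n^1$ and flow-direction shifts $\sigma_n^0,\sigma_n^1\in[-\varepsilon_0,\varepsilon_0]$. These bracket points furnish an unstable tail to the left of $0$ and a stable tail to the right of $t_n$, along which the orbits of $x_n$ (suitably shifted by $\sigma_n^0$) and of $y_n$ remain uniformly close. Gluing these two tails to the middle $\gamma_n$-close segment over $[0,t_n]$ yields an increasing homeomorphism $\tilde\rho_n\colon\bR\to\bR$ coinciding with $\rho_n$ on $[0,t_n]$, under which two full orbits stay within $\eta$ of each other for every $t\in\bR$. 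Expansivity then forces these orbits to differ by a single global time shift of absolute value at most $\beta/2$; combined with the small local product contributions $\sigma_n^0,\sigma_n^1$ at the two ends, this yields $|\rho_n(t_n)-t_n|\le \beta/2+2\varepsilon_0<\beta$, contradicting $\rho_n(t_n)-t_n=\beta$.

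The main technical obstacle, and the place where the detailed arguments of \cite[Lemma~3.1 and Proposition~3.2]{Tho:91} become essential, is the construction and control of the global reparametrization $\tilde\rho_n$: one must ensure that the stable/unstable tails obtained from local product structure at the two ends genuinely glue to $\rho_n|_{[0,t_n]}$ into an increasing homeomorphism of $\bR$ keeping orbits $\eta$-close throughout, and one must track precisely how the local product time-shifts $\sigma_n^0,\sigma_n^1$ accumulate into the total slippage $\rho_n(t_n)-t_n$. Because the middle segment $[0,t_n]$ can be arbitrarily long, this is not a pure continuity argument; it requires combining the exponential tracking along stable and unstable sets with an equicontinuity-type control on $\rho_n$ on the middle segment (which in turn uses the no-fixed-point hypothesis and the flow-perfectness of $X$).
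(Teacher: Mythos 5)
The paper does not actually prove this claim: it is quoted as a combination of \cite[Lemma 3.1 and Proposition 3.2]{Tho:91} and stated without proof, so to the extent that you also defer the ``main technical obstacle'' to Thomas, you are doing what the paper does. But the part of your sketch that goes beyond the citation contains a genuine error. The bracket point $p_n^0=[x_n,y_n]$ is a \emph{third} point: its backward orbit stays $\varepsilon_0$-close to the backward orbit of $y_n$ (since $p_n^0\in\cW^\u_{\varepsilon_0}(\Psi,y_n)$) and its forward orbit stays close to that of $\psi^{\sigma_n^0}(x_n)$. It does \emph{not} follow that the orbits of $x_n$ and $y_n$ themselves are close on a backward tail, and indeed they typically are not --- they are only guaranteed close on $[T_1^n,T_2^n]$. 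Consequently, the object you feed into expansivity (``two full orbits stay within $\eta$ of each other for every $t\in\bR$'', where the two orbits are read as those of $x_n$ and $y_n$) is unobtainable; and if it were obtainable, expansivity would force $y_n=\psi^{\tau}(x_n)$, i.e.\ that $x_n$ and $y_n$ lie on a common orbit, which is false in general. The correct move is to produce a \emph{single auxiliary orbit} that shadows the full orbit of $y_n$: one must bracket a third time, pairing $\psi^{t_n}(p_n^0)$ (whose backward orbit shadows $y_n$ on $(-\infty,0]$ via $p_n^0$ and shadows $y_n$ on $[0,t_n+\beta]$ via $x_n$ and $\rho_n$) with the right-hand bracket point $p_n^1$ (whose forward orbit shadows $y_n$ on $[t_n+\beta,\infty)$). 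Only the resulting double-bracket point has a full orbit to which expansivity applies.

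Even after this repair, the final accounting is not the one you give. Expansivity identifies the auxiliary point $w$ as $\psi^{c}(\psi^{t_n+\beta}(y_n))$ for some small $c$, while the construction also places $w$ in $\cW^\u(\Psi,\psi^{t_n+\tau}(y_n))$ for some $|\tau|\le\varepsilon_0$; reconciling these two positions on the orbit of $y_n$ is what forces $\beta$ to be small, and this step needs the fact that an expansive fixed-point free flow on a compact space admits no nontrivial ``asymptotic periods'' below the minimal period (a limit point of the backward orbit of $\psi^{t_n+\tau}(y_n)$ would otherwise be periodic with period about $\beta$). Your inequality $\lvert\rho_n(t_n)-t_n\rvert\le\beta/2+2\varepsilon_0$ skips this and, as written, bounds the slippage by quantities ($\beta/2$ from the expansivity scale, $\varepsilon_0$ from the brackets) that are not yet linked to $\rho_n(t_n)-t_n$. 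Finally, note that your compactness framing ($\gamma_n\downarrow0$) is never used --- the contradiction you aim for is purely an inequality between fixed constants --- so the argument should either be made direct or the limit actually exploited.
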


Given $\varepsilon\in(0,1)$, let $\varepsilon_1\in(0,\varepsilon)$ be such that 
\begin{equation}\label{eq45b}
	d(\psi^s(x),x)\le\frac\varepsilon2
	\quad\text{ for every }x\in X\text{ and }\lvert s\rvert\le\varepsilon_1.
\end{equation}	

Let $\gamma=\gamma(\frac12\varepsilon_1)$ be as provided by Claim \ref{cla:preper}. Use the expansivity property to find  $\delta_1=\delta_1(\frac12\varepsilon_1)>0$. Set $\gamma'=\min\{\varepsilon,\gamma,\delta_1\}$. Let $\delta_0$ be provided by the pseudo-orbit-tracing property associated to $T=1$ and $\frac12\gamma'$. Fix $\delta\in(0,\delta_0)$. To prove the closing property, let $x\in X$ be such that there is $t\ge T=1$ satisfying $d(x,\psi^t(x))\le\delta$. Consider the periodic $(\delta,1)$-pseudo orbit $(x_k,\tau_k)_k$, where $x_k=x$ and $\tau_k=t$ for all $k\in\bZ$. Hence, letting $s_k=kt$ for every $k\in\bZ$, by the tracing property this pseudo-orbit is $\frac12\gamma'$-traced by the orbit of some point $y$ by means of some increasing homeomorphism $\rho\colon\bR\to\bR$ satisfying $\rho(0)=0$ and
\begin{equation}\label{eq:coro}
	d(\psi^{\rho(kt+s)}(y),\psi^s(x))
	\le\frac12\gamma'
	<\gamma
	\quad\text{ for every }k\in\bZ\text{ and every }s\in[0,t).
\end{equation}
By the choice of $\gamma$, it holds
\begin{equation}\label{eq45bb}
	\lvert \rho(s)-s\rvert \le\varepsilon_1/2
	\quad\text{ for every }s\in[0,t].
\end{equation}	
In particular, $\lvert\rho(t)-t\rvert\le\varepsilon_1/2$.
Taking $k=1$, it follows that for all $s\in\bR$ it holds
\[\begin{split}
	d(\psi^{\rho(s)}(y),\psi^{\rho(t+s)}(y))
	&\le d(\psi^{\rho(s)}(y),\psi^s(x))+d(\psi^s(x),\psi^{\rho(t+s)}(y))\\
	\text{\small by \eqref{eq:coro}}\quad
	&\le \frac{\gamma'}{2}+\frac{\gamma'}{2}= \gamma'.
\end{split}\]
Letting $z\eqdef\psi^{\rho(t)}(y)$, we see that for all $s\in\bR$ it holds
\[
d(\psi^{\rho(s)}(y),\psi^{\rho(t+s)-\rho(t)}(z))
\le\gamma'\le\delta_1.
\]
Observe that $r(\tau)\eqdef\rho(t+\rho^{-1}(\tau))-\rho(t)$ defines a homeomorphism of $\bR$ satisfying $r(0)=0$ and such that for every $\tau\in\bR$ it holds
\[
d(\psi^\tau(y),\psi^{r(\tau)}(z))
\le\delta_1.
\]
By expansivity, we have $z=\psi^{t(z)}(y)$ for some $\lvert t(z)\rvert\le\frac12\varepsilon_1$. It follows
\[
	\rho(t)-t(z)
	=t+\rho(t)-t-t(z)
	\ge t-\frac{\varepsilon_1}{2}-\frac{\varepsilon_1}{2}
= t-\varepsilon_1>0.
\]
Hence, $y$ is periodic with period $\ell=\rho(t)-t(z)>0$ satisfying
\[
-\varepsilon_1
= t-\frac{\varepsilon_1}{2}-\frac{\varepsilon_1}{2}-t
\le \ell-t
= \rho(t)-t(z)-t
\le t+\frac{\varepsilon_1}{2}+\frac{\varepsilon_1}{2}-t
=\varepsilon_1.
\]
In particular, $\lvert \ell-t\rvert\le\varepsilon$.
Moreover, for every $s\in[0,\min\{\ell,t\}]$ it holds
\[\begin{split}
d(\psi^s(y),\psi^s(x))
&\le d(\psi^s(y),\psi^{\rho(s)}(y))+d(\psi^{\rho(s)}(y),\psi^s(x))\\
&=d(\psi^s(y),\psi^{\rho(s)-s}(\psi^s(y)))+d(\psi^{\rho(s)}(y),\psi^s(x))\\
\text{\tiny{(by  \eqref{eq45bb} together with \eqref{eq45b} and \eqref{eq:coro})}}\quad
&\le\frac\varepsilon2+\frac12\gamma'
\le \frac\varepsilon2+\frac\varepsilon2=\varepsilon.
\end{split}\]
This implies the closing property and finishes the proof of the lemma.
\end{proof}

\subsection{Invariants for orbit equivalent flows}\label{secinvariants}

A time-change of a continuous flow $\Psi\colon\bR\times X\to X$ induces an \emph{orbit equivalence}, that is, a homeomorphism of $X$ which preserves orbits and their orientation and only alters their time-parametrization. If a time-change induces a  \emph{topological conjugacy} between the flow and its time-change, then properties such as expansivity and being topologically mixing immediately passes from one to the other. But some properties, like mixing, are not preserved by time-changes in general.

\begin{remark}[Trivial time-changes --- orbit equivalences that are conjugacies]
Assume that $X$ is a compact manifold.	If $R\colon X\to\bR$ is continuous and differentiable with respect to $\Psi$ with derivative $R'=(d/dt)R\circ \Psi$ such that $1+R'>0$ (the derivative in the flow direction is positive), then the flow $\Psi$ and its time-change $\Psi_{1+R'}$, are topologically conjugate by means of the map $\pi\colon X\to X$ defined by $\pi(x)\eqdef \psi^{R(x)}(x)$, that is, $\pi\circ\psi^t=\psi_{1+R'}^t\circ\pi$ for every $t\in\bR$. This kind of time-changes are called \emph{trivial} or \emph{canonical}, see \cite[Example 1.3.20 and Proposition 1.3.21]{FisHas:19}).
\end{remark}

Let us now state some properties that are preserved by general time-changes (that is,  which do not  \emph{a priori} induce a topological conjugacy). Let $r$ be a time-change.

Clearly, $\Psi$ is topologically transitive if and only if its time-change $\Psi_r$ is.

Moreover, expansivity is preserved by a time-change.
Note that Bowen and Walters (\cite[Theorem 3]{BowWal:72}) give several equivalent definitions of expansivity. Since the very definition of expansivity takes into consideration time-changes of orbits, the following fact is an immediate consequence (see also \cite[Corollary 4]{BowWal:72}).

\begin{lemma}\label{lempro:3}
		If the flow $\Psi\colon X\times \bR\to X$ on a compact metric space $(X,d)$  is expansive and $r\colon X\to\bR_{>0}$ is a continuous function, then for every $R>0$ satisfying $\max_Xr\le R$ the time-change $\Psi_r\colon X\times \bR\to X$ is an expansive flow on the metric space $(X,d_R)$, where $d_R(\cdot,\cdot)=Rd(\cdot,\cdot)$.
\end{lemma}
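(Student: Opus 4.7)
The plan is to reduce expansivity of $\Psi_r$ on $(X,d_R)$ directly to the expansivity of $\Psi$ on $(X,d)$, exploiting that $\Psi$ and $\Psi_r$ share the same oriented orbits and that the definition of expansivity already accommodates an arbitrary increasing time-reparametrization $\rho\colon\bR\to\bR$ with $\rho(0)=0$. Consequently, no new tracing argument is required; one only has to translate the hypothesis from one time parametrization to the other.

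Given $\varepsilon'>0$, I would set $\varepsilon\eqdef\varepsilon'/R$, take the $\delta>0$ furnished by expansivity of $\Psi$ applied to $\varepsilon$, and claim that $\delta'\eqdef R\delta$ witnesses expansivity of $\Psi_r$ on $(X,d_R)$. Suppose $x,y\in X$ and $\tilde\rho\colon\bR\to\bR$ is an increasing homeomorphism with $\tilde\rho(0)=0$ such that $d_R(\psi_r^{\tilde\rho(\tau)}(y),\psi_r^\tau(x))\le\delta'$ for every $\tau\in\bR$, equivalently $d(\psi_r^{\tilde\rho(\tau)}(y),\psi_r^\tau(x))\le\delta$ for every $\tau\in\bR$. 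Using \eqref{eq:elldef} and \eqref{eq:kdef}, I would change variables by $s=\ell(x,\tau)$ (so $\tau=k(x,s)$) and define
\[
\rho(s)\eqdef \ell(y,\tilde\rho(k(x,s))).
\]
Since $\ell(y,\cdot)$ and $k(x,\cdot)$ are strictly increasing homeomorphisms of $\bR$ vanishing at $0$ (as recalled after \eqref{eq:elldefori}), and $\tilde\rho$ is of the same form, the composition $\rho$ is itself a strictly increasing homeomorphism of $\bR$ with $\rho(0)=0$. For every $s\in\bR$ I then have
\[
d(\psi^{\rho(s)}(y),\psi^s(x))=d(\psi_r^{\tilde\rho(k(x,s))}(y),\psi_r^{k(x,s)}(x))\le\delta,
\]
which is precisely the tracing hypothesis in the expansivity of $\Psi$ at scale $\varepsilon$.

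Expansivity of $\Psi$ then yields $y=\psi^{t(y)}(x)$ with $|t(y)|\le\varepsilon$. Rewriting this in the $\Psi_r$ parametrization gives $y=\psi_r^{\tau(y)}(x)$ with $\tau(y)=k(x,t(y))$. From $r\le R$ and the defining identity $t(y)=\int_0^{k(x,t(y))}r(\psi_r^s(x))^{-1}\,ds$, the integrand is bounded below by $1/R$, so $|\tau(y)|=|k(x,t(y))|\le R|t(y)|\le R\varepsilon=\varepsilon'$, which is exactly the conclusion of expansivity for $\Psi_r$ on $(X,d_R)$. The only mildly delicate point is confirming that the composition $\rho$ is truly a homeomorphism of $\bR$ (in particular surjective), which follows immediately from the monotonicity of the constituents together with $\ell(y,t),k(x,t)\to\pm\infty$ as $t\to\pm\infty$; everything else is a direct bookkeeping exercise converting between the two parametrizations, and the $R$-factor in $d_R$ is precisely what absorbs the bound $|\tau(y)|\le R|t(y)|$ into the expansivity constant.
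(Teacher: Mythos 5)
Your proof is correct and follows exactly the route the paper has in mind: the paper gives no written argument, simply observing that since the definition of expansivity already allows an arbitrary increasing reparametrization $\rho$, the statement is an immediate consequence (citing \cite[Corollary 4]{BowWal:72}), and your change of variables $\rho(s)=\ell(y,\tilde\rho(k(x,s)))$ together with the bound $|k(x,t)|\le R|t|$ is precisely the bookkeeping that makes this observation precise.
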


As the pseudo-orbit-tracing property is defined taking into account some possible time-change on the tracing orbit, this property is also preserved under an arbitrary time-change.

\begin{lemma}[{\cite[Proposition 1.4 and Theorem 1]{Tho:82}}]\label{lempro:4}
	The flow $\Psi$ has  the pseudo-orbit-tracing property if, and only if, its time-change $\Psi_r$ does.
\end{lemma}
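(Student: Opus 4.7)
The plan is to exploit the fact that the orbits of $\Psi$ and $\Psi_r$ coincide as oriented subsets of $X$, so that pseudo-orbits and traces can be transferred between the two flows by reparametrizing time-budgets via the change-of-variables functions $\ell$ and $k$ from \eqref{eq:elldefori}--\eqref{eq:kdef}. Since $\Psi$ is itself the time-change of $\Psi_r$ by $1/r$, it suffices to prove one implication; for definiteness, I assume $\Psi$ has the pseudo-orbit-tracing property and derive the same for $\Psi_r$.

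Compactness and continuity of $r$ yield bounds $0<r_{\min}\le r\le r_{\max}<\infty$. Given thresholds $\varepsilon>0$, $T>0$ for $\Psi_r$-tracing, apply the POTP of $\Psi$ to $\varepsilon'=\varepsilon$ and $T'=T/r_{\max}$ to obtain a $\delta'>0$, and set $\delta=\delta'$. For a $(\delta,T)$-pseudo orbit $(x_k,\tau_k)_{k\in\bZ}$ of $\Psi_r$, define the converted times $\tau_k''\eqdef \ell(x_k,\tau_k)$. Then $\psi^{\tau_k''}(x_k)=\psi_r^{\tau_k}(x_k)$ by \eqref{eq:elldef}, so the jumps are unchanged and $d(\psi^{\tau_k''}(x_k),x_{k+1})\le\delta\le\delta'$; moreover, $\tau_k''\ge\tau_k/r_{\max}\ge T'$. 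Thus $(x_k,\tau_k'')$ is a $(\delta',T')$-pseudo orbit of $\Psi$, which by hypothesis admits an $\varepsilon'$-tracing.

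To lift the $\Psi$-tracing to a $\Psi_r$-tracing, let $(y,\rho)$ trace $(x_k,\tau_k'')$, and set $t_k=\sum_{j<k}\tau_j$, $s_k''=\sum_{j<k}\tau_j''$. For $\tau\in[t_k,t_{k+1})$ write $s(\tau)\eqdef s_k''+\ell(x_k,\tau-t_k)$ and put $\rho_r(\tau)\eqdef k(y,\rho(s(\tau)))$. Continuity of $\tau\mapsto s(\tau)$ at the junctions $\tau=t_k$ follows from the telescoping identity $s_{k-1}''+\ell(x_{k-1},\tau_{k-1})=s_k''$; strict monotonicity and properness onto $\bR$ are inherited from $\ell(x_k,\cdot)$, $\rho$, and $k(y,\cdot)$; and $\rho_r(0)=0$ is immediate. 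Using \eqref{eq:elldef} and \eqref{eq:krel} one then computes
\[
	\psi_r^{\rho_r(\tau)}(y)=\psi^{\rho(s(\tau))}(y), \qquad \psi_r^{\tau-t_k}(x_k)=\psi^{s(\tau)-s_k''}(x_k),
\]
so that the $\Psi_r$-tracing inequality reduces pointwise to the $\Psi$-tracing inequality evaluated at $s(\tau)\in[s_k'',s_{k+1}'')$, giving the required bound by $\varepsilon$.

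The hard part is not the algebra but the bookkeeping: one must carefully match the partial-sum indexings between the two time axes, verify global continuity of the piecewise-defined $\rho_r$, and confirm that it is a homeomorphism of $\bR$ (not merely a continuous bijection) by stringing together the fact that $\ell(x,\cdot)$, $k(y,\cdot)$, and $\rho$ are proper increasing homeomorphisms of $\bR$. Once these identifications are laid down, every remaining step is a direct manipulation of the cocycles $\ell$, $k$ and the uniform bounds on $r$.
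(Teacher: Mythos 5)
Your argument is correct. Note that the paper does not prove this lemma at all --- it is stated as a citation to Thomas \cite{Tho:82} --- so what you have written is a self-contained proof of a result the authors defer to the literature; the strategy (convert the time-budgets of the pseudo-orbit via $\ell$, trace with $\Psi$, then pull the tracing reparametrization back through $k$ and the cocycle identities \eqref{eq:elldefori}, \eqref{eq:elldef}, \eqref{eq:kdef}, \eqref{eq:krel}) is the natural one and is essentially Thomas's. All the key verifications check out: $\tau_k''=\ell(x_k,\tau_k)\ge \tau_k/r_{\max}\ge T'$ follows from $\tau_k=\int_0^{\tau_k''}r(\psi^s(x_k))\,ds\le r_{\max}\tau_k''$; the jump points $\psi^{\tau_k''}(x_k)=\psi_r^{\tau_k}(x_k)$ are literally equal so $\delta=\delta'$ works; $s(\tau)$ glues continuously at the junctions by the telescoping identity; and the reduction of the $\Psi_r$-tracing inequality to the $\Psi$-tracing inequality at $s(\tau)\in[s_k'',s_{k+1}'')$ is exact since the metric is unchanged. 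One cosmetic remark: the paper's definition of $\varepsilon$-tracing only requires $\rho$ to be an increasing continuous injection of $\bR$ with continuous inverse, not necessarily surjective; your composition $\rho_r=k(y,\rho(s(\cdot)))$ is unaffected because $k(y,\cdot)$ is defined on all of $\bR$, so this is not a gap, and by \cite[Thm.~4]{Kom:84} (quoted in the paper) one may anyway take $\rho$ onto.
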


\begin{remark}\label{rem:noinvariance}
	See \cite[Chapter 10]{CorFomSin:82} for further basic properties of time-changes. The entropy of flows with fixed-points from the point of view of time-changes was studied in \cite{Ohn:80}.

	To preserve properties such as hyperbolicity under a time-change is a more subtle issue. By Anosov--Sinai \cite{AnoSin:67} (see also Parry \cite[Section 6]{Par:86}), the time-change of a $C^1$ Anosov flow $\Psi$  under a $C^1$ function $r$ is again an Anosov flow (and, in particular, the flow $\Psi_r$ has a H\"older continuous hyperbolic splitting). Note that the hyperbolic splitting $E^\s\oplus E^\c\oplus E^\u$ of  an Anosov flow varies H\"older continuously if the flow is $C^2$. In \cite{Par:86}, Parry also considers time-changes by means of the geometric potential $\varphi^{(\u)}$ which is defined as in \eqref{phidef} with $E^\u$ in the place of $F^\u$. In general, an improvement of the regularity of the flow does not improve the regularity of $\varphi^{(\u)}$. This potential is $C^{n-1}$, $n\ge2$, provided that $\Psi$ is $C^n$ and that we \emph{assume} that $v\mapsto E^\u_v$ is $C^{n-1}$. The latter condition is not always guaranteed, even if we start, for example, from an Anosov geodesic flow of a surface of constant negative curvature (see \cite[Remark p. 272]{Par:86}).
	
	The existence of a local product structure is \emph{a priori} not preserved under time-change. Brunella \cite[Section 1]{Bru:95} provides an example of a local flow  with trivial (degenerate to a point) strong stable and strong unstable sets.
	
	The property of being topologically mixing is in general not preserved under time-change. For example, any time-change resulting in a time-$t$ suspension flow is not mixing.
\end{remark}

\section{Geodesic flows of nonpositively curved surfaces}\label{sec:geodesic}

In this section, we discuss geodesic flows and an associated class of hyperbolic potentials. Throughout, consider a $C^\infty$ closed connected surface $M$ of negative Euler characteristic and a Riemannian metric of nonpositive Gaussian curvature. We study ergodic properties of the geodesic flow $G=(g^t)_{t\in\bR}$ on the unit tangent bundle $T^1M$. 

\subsection{Some geometric facts}\label{secrem:basicfacts}

We quickly recall some fundamental properties of un-/stable sets for $G$. 
Geometry naturally provides the existence of two \emph{continuous} line bundles $F^\s$ and $F^\u$ of the tangent space $TT^1M$, defined at every point of $T^1M$, each of them being transverse to the line-bundle which is tangent to the orbit foliation and invariant under the derivative of the geodesic flow. It is know that the only obstruction for the flow being Anosov is when there exists an orbit along which $F^\s$ and $F^\u$ coincide \cite{Ebe:73I-II}.  The potential $\varphi^{(\u)}$ is well-defined and depends continuously on $v$ since the subbundle $F^\u_v$ varies continuously with $v$. These bundles are defined in terms of stable and unstable Jacobi fields (we refrain from giving the details and refer, for example, to \cite{Ebe:01}). Denote by $V$ the vector field that generates the geodesic flow $G$, and let $F^\c$ be the line bundle containing $V$. 

The orbit space of $G$ naturally splits into two invariant subsets as follows. A vector $v\in T^1M$ is \emph{singular} if $F_v^\s$ and $F_v^\u$ are colinear. Otherwise, we say that $v$ is \emph{regular}. Denote by $\cR$ the \emph{set of regular vectors} and by $\cS\eqdef T^1M\setminus\cR$ the \emph{set of singular vectors}. The set $\cS$ is compact and invariant. It can be characterized as the set of vectors tangent to a geodesic that entirely lies in a set of points at which the manifold has zero curvature. As such, $\cS$ is the set without any hyperbolic behavior, while the complementary set $\cR$ exhibits some (possibly nonuniformly) hyperbolic behavior. Denote by $\cR_{\rm per}\subset\cR$ the \emph{subset of all regular vectors tangent to some closed geodesic}. From now on, we assume that $\cR\neq\emptyset$, in which case the surface $M$ is said to be \emph{rank $1$}. We are also mainly interested in the case that $\cS\ne\emptyset$, in which $G$ is not an Anosov flow.

We also write $F^\cs\eqdef F^\c\oplus F^\s$ and $F^\cu\eqdef F^\c\oplus F^\u$. If $M$ has negative curvature, then $G$ is Anosov and $TT^1M=F^\s\oplus F^\c\oplus F^\u$ is the hyperbolic splitting. If $M$ has nonpositive curvature, then the subbundles $F^\s,F^\u$ may intersect. We list some essential properties:
\begin{enumerate}[label=$\circ$]
\item $\dim(F^\s)=\dim(F^\u)=1$,
\item ($G$-invariance) $dg^t_v(F^\ddag_v)=F^\ddag_{g^t(v)}$ for all $v\in T^1M$, $t\in\bR$, and $\ddag\in\{\s,\cs,\u,\cu\}$,
\item $F^\s_v,F^\cs_v,F^\u_v,F^\cu_v$ depend continuously on $v$,
\item $F^\s_v$ and $F^\u_v$ are both orthogonal to $V(v)$,
\item $F^\s_v$ and $F^\u_v$ are colinear if and only if $v\in\cS$.
\end{enumerate}
Furthermore, $F^\ddag$ is integrable to a foliation $\cF^\ddag$ for each $\ddag\in\{\s,\cs,\u,\cu\}$; each leaf is a $C^1$ submanifold. 
We refer to $\cF^\ddag(v)$ for $\ddag\in\{\s,\u\}$ as the \emph{un-/stable leaf} of $v$, respectively. 
The family of sets $\{\cF^\ddag(v)\colon v\in T^1M\}$ for $\ddag\in\{\s,\u\}$ is a minimal foliation, that is, each leaf is dense \cite[Theorem 3.7]{Bal:82}. In fact, the latter property implies that the flow $G$ is topologically mixing (see \cite[Proof of Theorem 3.5]{Bal:82}). By \cite{Bal:82}, regular vectors tangent to closed geodesics are dense in $T^1M$. For every vector $v\in \cR_{\rm per}$, using notation \eqref{eq:defstrstaset}, these leaves are just 
\begin{equation}\label{eqmanifds}
	\cF^\ddag(v)
	= \cW^\ddag(G,v)
	\quad\text{ for }\ddag\in\{\s,\u\}.
\end{equation}
For a proof of \eqref{eqmanifds} see \cite[Proposition 3.10]{BBE-Annals}, see also \cite[Lemma 4.1]{LimPol:}.

\subsection{Some thermodynamic facts}

The geodesic flow $G$ is an example of a dynamical system with interesting thermodynamic properties. The top Lyapunov exponent $\chi^+(\mu)$ provided by the Oseledets ergodic theorem almost everywhere for any ergodic $G$-invariant probability measure $\mu$ coincides with Birkhoff average of this potential, that is, $\chi^+(\mu)=\int\varphi^{(\u)}\,d\mu$.  
The geodesic flow preserves, for example, the Liouville measure. The measure $\widetilde m$ obtained by restricting the Liouville measure to the invariant open set $\cR$ and normalizing to get a probability measure gives a hyperbolic ergodic measure (that is, one which has, besides the Lyapunov exponent $0$ associated to the subbundle tangent to the orbit foliation, one positive and one negative Lyapunov exponent), see  \cite{Pes:77}.

Because the measure $\widetilde m$ is hyperbolic, ergodic, and absolutely continuous with respect to the Liouville measure, the Pesin entropy formula \cite{Pes:77} gives that $\chi^+(\widetilde m)=h_{\widetilde m}(g^1)>0$. On the other hand, any ergodic invariant probability measure $\mu_0$ supported on $\cS$ satisfies 
\begin{equation}\label{eqvpHnhup}
	\chi^+(\mu_0)=h_{\mu_0}(g^1)=0,
	\,\text{ hence }\,
	h_{\rm top}(g^1,\cS)=\sup_{\mu(\cS)=1}h_{\mu}(g^1)
	=0.
\end{equation}	 
The latter equality in \eqref{eqvpHnhup} is just the variational principle for the topological entropy of $g^1|_\cS$ and the supremum is taken over all invariant measures $\mu$ satisfying $\mu(\cS)=1$.
These two facts give rise to a so-called \emph{phase transition}, that is, a \emph{co-existence} of equilibrium states  for $\varphi^{(\u)}$: for every invariant measure $\mu_0$ such that $\mu_0(\cS)=1$ we have
\begin{equation}\label{eq:phatra}
	   h_{\widetilde m}(g^1) - \chi^+(\widetilde m)
	= h_{\mu_0}(g^1) - \chi^+(\mu_0)
	= 0
	= \max_{\mu}\left(h_\mu(g^1)+\int\varphi^{(\u)}\,d\mu\right),
\end{equation}
where the maximum is taken over all ergodic $G$-invariant probability measures $\mu$ (compare Figure \ref{fig.1}). 
\begin{figure}
\begin{overpic}[scale=.50]{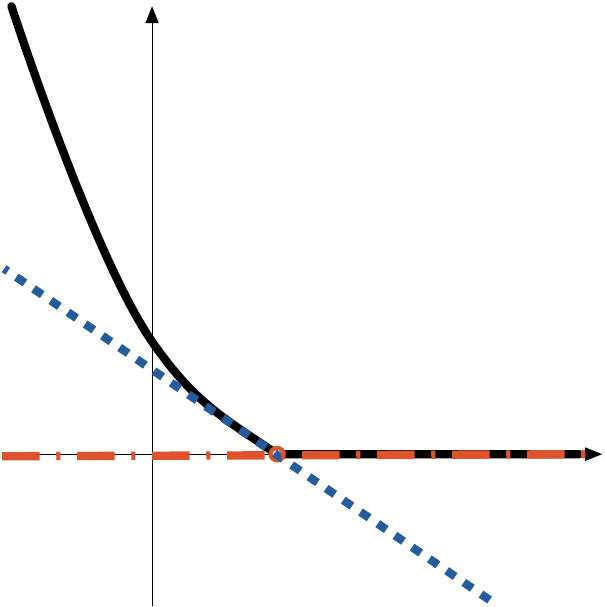}
      	\put(45,18){\small$1$}	
      	\put(97,18){\small$q$}	
		\put(84,0){\small$\textcolor{blue}{h_{\widetilde m}(g^1)-q\chi^+(\widetilde m)}$}
		\put(-49,17){\small$\textcolor{red}{0\equiv h_{\mu_0}(g^1)-q\chi^+(\mu_0)}$}
      	\put(27,95){\small$P_{\rm top}(G,q\varphi^{(\u)})
			=\sup_\mu\big(h_\mu(g^1)+q\int\varphi^{(\u)}\,d\mu\big)$}	
	\put(67,83){\small$=\sup_\mu\big(h_\mu(g^1)-q\chi^+(\mu)\big)$}		
\end{overpic}
\caption{Co-existence of the equilibrium states $\widetilde m$ and $\mu_0$ for the potential $q\varphi^{(\u)}$ at $q=1$}
\label{fig.1}
\end{figure}
Moreover, it is shown in \cite{BurBuzFisSaw:21} that $\widetilde m$ is the only equilibrium state for $\varphi^{(\u)}$ which is not carried on $\cS$.

If $\cS=\emptyset$, then $G$ is an Anosov flow and $F^\u$ is H\"older continuous. Hence, $\varphi^{(\u)}$ is a H\"older continuous function and the classical thermodynamic formalism of hyperbolic dynamical systems guarantees that for any $q\in\bR$ there exists a unique equilibrium state for $q\varphi^{(\u)}$ with respect to the flow \cite[Theorem 3.3]{BowRue:75}.

If $\cS\neq\emptyset$, then equations \eqref{eq:phatra} means that there exist two equilibrium states for $\varphi^{(\u)}$, that is, for the parameter $q=1$. Since the flow $G$ is $C^\infty$, it follows from \cite{Newhouse-Entropy} that the entropy function $\mu\mapsto h_\mu(g^1)$ is upper semi-continuous; see also \cite[Proposition 3.3]{Kni:98}. Hence, for any $q\in\bR$ there exists an equilibrium state for the continuous function $q\varphi^{(\u)}$. By the Ruelle inequality \cite{Rue:78}, for any ergodic $G$-invariant probability $\mu$ we have $h_\mu(g^1)\le\chi^+(\mu)=-\int\varphi^{(\u)}\,d\mu$, hence for every $q\geq 1$ it holds
\[
	\max_{\mu}\left(h_\mu(g^1)+\int q\varphi^{(\u)}\,d\mu\right)
	\le 0.
\]
This implies that every ergodic invariant probability measure $\mu_0$ supported on $\cS$ is an equilibrium state for the potential $q\varphi^{(\u)}$, $q\geq 1$, as
\[
	h_{\mu_0}(g^1)-q\chi^+(\mu_0)
	= 0 - q\cdot 0
	=0
	= \max_\mu\left(h_\mu(g^1)+\int q\varphi^{(\u)}\,d\mu\right).
\]
If $\cS$ contains only one periodic orbit, then $\mu_0$ is the unique equilibrium state for $q\varphi^{(\u)}$,
for any $q\geq 1$. If $\cS$ contains a periodic flat cylinder with a continuum of closed zero curvature geodesics, then the potential $q\varphi^{(\u)}$, $q\geq 1$, has a continuum of coexisting equilibrium states.

\begin{remark}
In \cite{BurChe:24} the authors discuss the weak$\ast$ limit of the equilibrium states as $q\to-1$. 
\end{remark}

\subsection{Hyperbolic potentials}\label{ssechyppot}

\emph{A priori}, hyperbolicity of a potential (recall \eqref{eq:T-for-Pdefhyperbolic}) seems to be a very restrictive hypothesis. However, following~\cite[Proposition 3.1]{InoRiv:12} verbatim, in the case of the geodesic flow $G$, we get the following equivalences which indicate that it is a rather natural assumption. Note that a key property is that, by 
the Ruelle inequality, positive entropy of an ergodic measure (relative to $G$) implies that this measure is hyperbolic.

\begin{lemma}\label{lem:equi}
Let $f\colon T^1M\to\bR$ be continuous. The following facts are equivalent:
\begin{enumerate}[label=\textnormal{(\roman*)}]
		\item The function $f$ is hyperbolic (relative to $G$). \label{c:i}
		\item $P_{\rm top}(G,f)>\max_{\mu\in\cM(G)}\int f \,d\mu$. \label{c:ii}
		\item The metric entropy of each equilibrium state for $f$  (relative to $G$) is  positive. \label{c:iii}
		\item Every continuous function cohomologous%
\footnote{Two continuous functions $r,s\colon T^1M\to\bR$ are \emph{cohomologous} (with respect to $G$) if (using the notation from~\eqref{eq:T-for-Pintegral}) there exists a continuous function $\eta\colon T^1M\to\bR$  such that $\bar{r}_0^t-\bar{s}_0^t=\eta\circ g^t-\eta$ for every $t\in\bR$.
Note that this is equivalent to the fact that $r-s=\lim_{t\to0}(\eta\circ g^t-\eta)/t$.} 
to $f$ is hyperbolic  (relative to $G$). \label{c:iv}
\end{enumerate}
\end{lemma}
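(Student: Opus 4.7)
\medskip
\noindent\textbf{Proof plan.} I will establish the chain $(\ref{c:i}) \Leftrightarrow (\ref{c:ii})$, $(\ref{c:ii}) \Leftrightarrow (\ref{c:iii})$, and $(\ref{c:i}) \Leftrightarrow (\ref{c:iv})$ by direct translation of the definitions, combining Lemma~\ref{lemequalii}, the variational principle~\eqref{varprinc}, and the standard invariance of topological pressure and of $\cM(G)$-integrals under cohomology. Throughout I will rely on the fact that $\cM(G)$ is weak$^*$ compact, that $\mu \mapsto \int f\,d\mu$ is continuous, and that, since $G$ is $C^\infty$, the entropy map $\mu \mapsto h_\mu(g^1)$ is upper semi-continuous by \cite{Newhouse-Entropy}; in particular, both equilibrium states for $f$ and maximizers of $\mu \mapsto \int f\,d\mu$ in $\cM(G)$ exist.

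For $(\ref{c:i}) \Leftrightarrow (\ref{c:ii})$, I would invoke Lemma~\ref{lemequalii}, which yields
\[
\inf_{T > 0}\,\max_{y \in T^1M}\,\tfrac{1}{T}\bar f_0^T(y) \;=\; \max_{\mu \in \cM(G)}\int f\,d\mu.
\]
Condition~(\ref{c:i}) asserts that some $T > 0$ satisfies $P_{\rm top}(G,f) > \tfrac{1}{T}\max_y \bar f_0^T(y)$, whereas~(\ref{c:ii}) asserts that $P_{\rm top}(G,f)$ strictly exceeds the infimum on the left; these are equivalent by the very definition of an infimum.

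For $(\ref{c:ii}) \Leftrightarrow (\ref{c:iii})$, I would argue through the variational principle. If~(\ref{c:iii}) fails, pick an equilibrium state $\mu$ with $h_\mu(g^1) = 0$; then $P_{\rm top}(G,f) = \int f\,d\mu \leq \max_{\nu \in \cM(G)}\int f\,d\nu$, violating~(\ref{c:ii}). Conversely, if~(\ref{c:ii}) fails, let $\nu^* \in \cM(G)$ maximize $\nu \mapsto \int f\,d\nu$; then the variational principle combined with the assumed equality gives
\[
P_{\rm top}(G,f) \;\geq\; h_{\nu^*}(g^1) + \int f\,d\nu^* \;=\; h_{\nu^*}(g^1) + P_{\rm top}(G,f),
\]
forcing $h_{\nu^*}(g^1) = 0$; hence $\nu^*$ is a zero-entropy equilibrium state, contradicting~(\ref{c:iii}).

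For $(\ref{c:i}) \Leftrightarrow (\ref{c:iv})$, the implication $(\ref{c:iv}) \Rightarrow (\ref{c:i})$ is immediate by taking $\eta \equiv 0$. For the converse, given $r$ continuous and cohomologous to $f$ via a continuous (hence bounded, since $T^1M$ is compact) transfer function $\eta$, the identity $\bar r_0^t - \bar f_0^t = \eta \circ g^t - \eta$ integrates to zero against any $\mu \in \cM(G)$ by $g^t$-invariance, so $\int r\,d\mu = \int f\,d\mu$ for every $\mu$, and the variational principle yields $P_{\rm top}(G,r) = P_{\rm top}(G,f)$. Thus condition~(\ref{c:ii}) is invariant under cohomology, and $(\ref{c:i}) \Rightarrow (\ref{c:iv})$ follows from the equivalence already established. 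No step should present a serious obstacle; the whole argument is bookkeeping with Lemma~\ref{lemequalii}, the variational principle, and the vanishing of coboundaries under invariant integration. The subtlest point, ensuring the existence of the maximizer $\nu^*$ and of equilibrium states, is the upper semi-continuity of the entropy map supplied by the $C^\infty$ smoothness of $G$.
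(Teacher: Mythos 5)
Your proof is correct and follows essentially the same route as the paper's: Lemma~\ref{lemequalii} for the equivalence of (i) and (ii), the variational principle for (ii) and (iii), and the fact that a coboundary integrates to zero against every invariant measure (so cohomologous potentials share integrals, pressure, and equilibrium states) for (iv). The only cosmetic differences are that you transfer hyperbolicity across the cohomology class via condition (ii) where the paper routes through (iii) and Birkhoff's theorem, which changes nothing of substance.
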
	

\begin{proof}
Assuming \ref{c:i}, by definition of being a hyperbolic potential, there is $T>0$ satisfying
\[
	\max_{T^1M} \frac1T\barf{T}
	< P_{\rm top}(G,f).
\]
Hence, it follows from the second equality in Lemma \ref{lemequalii} that
\[
	\max_{\mu\in\cM(G)} \int f\,d\mu
	= \inf_{T>0}\max_{T^1M}\frac1T \barf{T}
	< P_{\rm top}(G,f),
\]
and hence \ref{c:ii} holds. Analogously, \ref{c:ii} implies \ref{c:i}.
Assume now \ref{c:ii} and let $\mu$ be an equilibrium state for $f$. Then
\[
	h_\mu(G)+\int f\,d\mu
	= P_{\rm top}(G,f)
	> \max_{\mu\in\cM(G)} \int f\,d\mu,
\]
which implies $h_\mu(G)>0$ and hence \ref{c:iii} holds. Analogously, \ref{c:iii} implies \ref{c:ii}.

To prove the remaining equivalences, let us first consider $s$ cohomologous to $f$. Assume that $\mu$ is an equilibrium state for $f$. Hence there is $\eta\colon T^1M\to\bR$ continuous so that $\barf{T}=\bar{s}_0^T+\eta\circ g^T-\eta$. By Birkhoff's ergodic theorem,
\[\begin{split}
	\int f\,d\mu
	&= \int\lim_{T\to\infty}\frac1T\barf{T}(v)\,d\mu(v)\\
	&= \int\lim_{T\to\infty}\left(\frac1T\bar{s}_0^T(v)
		+\frac1T\eta\circ g^T(v)-\frac1T\eta(v)\right)\,d\mu(v)\\
	&= \int\lim_{T\to\infty}\frac1T\bar{s}_0^T(v)\,d\mu(v)	
	= \int s\,d\mu.
\end{split}\]
This implies that $\mu$ is also an equilibrium state for $s$ and hence \ref{c:iii} holds for $s$. This together with the equivalence of \ref{c:iii} and \ref{c:i} proves that \ref{c:iv} is equivalent to \ref{c:i}.
\end{proof}

Recall the definition of  the geometric potential $\varphi^{(\u)}$ in~\eqref{phidef}.

\begin{lemma}[{\cite[Lemma 4]{GelSch:14}}]\label{lem:11}
	For all $q<1$ it holds
\[
	P_{\rm top}(G,q\varphi^{(\u)})-q\max_{v\in T^1M} \int_0^1\varphi^{(\u)}(g^s(v))\,ds>0,
\]	
hence $q\varphi^{(\u)}$ is hyperbolic with $T=1$. 
\end{lemma}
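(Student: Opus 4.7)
The plan is to observe that the maximum term on the left-hand side vanishes, so that the displayed inequality reduces to the single assertion $P_{\rm top}(G,q\varphi^{(\u)})>0$ for every $q<1$, and then to prove the latter via the variational principle applied to the Liouville-type measure on the regular set.

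\emph{Step 1: The maximum is zero.} I would first establish that
\[
\max_{v\in T^1M}\int_0^1\varphi^{(\u)}(g^s(v))\,ds = 0.
\]
Nonpositive curvature forces $t\mapsto\lVert dg^t|_{F^\u_v}\rVert$ to be monotonically non-decreasing on $[0,\infty)$ for every $v\in T^1M$ (a standard consequence of the Jacobi equation with $K\le 0$), so its logarithmic derivative at $t=0$ is non-negative, giving $\varphi^{(\u)}(v)\le 0$ pointwise by \eqref{phidef} and hence the upper bound $\le 0$. For the matching lower bound I would invoke Lemma \ref{lemequalii} with $f=\varphi^{(\u)}$, which provides
\[
\max_{v\in T^1M}\int_0^1\varphi^{(\u)}(g^s(v))\,ds\;\ge\;\max_{\mu\in\cM(G)}\int\varphi^{(\u)}\,d\mu.
\]
The right-hand side is $\ge 0$: for any ergodic invariant probability $\mu_0$ supported on $\cS$ one has $\chi^+(\mu_0)=0$ (see~\eqref{eqvpHnhup}), and the identification of $\chi^+$ as a Birkhoff average of $-\varphi^{(\u)}$ gives $\int\varphi^{(\u)}\,d\mu_0=-\chi^+(\mu_0)=0$.

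\emph{Step 2: Variational principle with a smooth reference measure.} With the max term identified as $0$, it remains to prove $P_{\rm top}(G,q\varphi^{(\u)})>0$ for every $q<1$. The natural candidate to test in the variational principle~\eqref{varprinc} is the Liouville-type measure $\widetilde m$ from Section~\ref{secrem:basicfacts}, which is ergodic, hyperbolic, and absolutely continuous with respect to the Riemannian volume. Pesin's entropy formula then yields
\[
h_{\widetilde m}(G)=\chi^+(\widetilde m)=-\int\varphi^{(\u)}\,d\widetilde m>0.
\]
Plugging $\widetilde m$ into \eqref{varprinc} gives
\[
P_{\rm top}(G,q\varphi^{(\u)})\;\ge\;h_{\widetilde m}(G)+q\int\varphi^{(\u)}\,d\widetilde m\;=\;(1-q)\,h_{\widetilde m}(G),
\]
which is strictly positive whenever $q<1$. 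Combined with Step 1, this establishes the displayed inequality, and hyperbolicity of $q\varphi^{(\u)}$ at $T=1$ then follows from \eqref{eq:T-for-Pdefhyperbolic}.

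\emph{Main obstacle.} The argument is short, and the two non-trivial inputs --- the vanishing of the positive Lyapunov exponent on singular measures and Pesin's entropy formula for the Liouville measure on $\cR$ --- are both classical and already recalled in Section~\ref{secrem:basicfacts}. The only mildly delicate point is keeping the signs consistent between the definition of $\varphi^{(\u)}$ in \eqref{phidef} and the identification of $\chi^+$ as $-\int\varphi^{(\u)}\,d\mu$, but this is routine once the monotonicity of $t\mapsto\lVert dg^t|_{F^\u_v}\rVert$ is recorded.
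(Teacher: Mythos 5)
The paper offers no proof of this lemma at all --- it is imported wholesale from \cite[Lemma 4]{GelSch:14} --- so there is no internal argument to compare against. What you have written is essentially the standard proof from that reference, and its two substantive inputs are exactly the right ones: invariant measures carried by $\cS$ have vanishing top exponent, which via Lemma \ref{lemequalii} forces $\max_{v}\int_0^1\varphi^{(\u)}(g^s(v))\,ds\ge \max_{\mu}\int\varphi^{(\u)}\,d\mu\ge 0$; and Pesin's entropy formula for $\widetilde m$ gives $P_{\rm top}(G,q\varphi^{(\u)})\ge h_{\widetilde m}(G)+q\int\varphi^{(\u)}\,d\widetilde m=(1-q)h_{\widetilde m}(G)>0$. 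The pointwise bound $\varphi^{(\u)}\le 0$ from monotonicity of $t\mapsto\lVert dg^t|_{F^\u_v}\rVert$ is also correct (in dimension two both $\lVert J\rVert$ and $\lVert J'\rVert$ are nondecreasing for unstable Jacobi fields when $K\le0$). For $0\le q<1$ and $\cS\ne\emptyset$ the argument is complete.

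Two caveats. First, your Step 1 equality genuinely requires $\cS\ne\emptyset$: in the Anosov case every unstable Riccati solution is strictly positive, so $\varphi^{(\u)}<0$ everywhere and the maximum is strictly negative. For $q\ge0$ this only helps, but for $q<0$ the term $q\max_v\int_0^1\varphi^{(\u)}(g^s(v))\,ds$ becomes strictly positive and the reduction to ``$P_{\rm top}>0$'' is no longer valid. The inequality still holds there, because $h_{\widetilde m}(G)=\int(-\varphi^{(\u)})\,d\widetilde m\ge \min_v\int_0^1(-\varphi^{(\u)})(g^s(v))\,ds=-\max_v\int_0^1\varphi^{(\u)}(g^s(v))\,ds$, whence $(1-q)h_{\widetilde m}(G)>q\max_v\int_0^1\varphi^{(\u)}(g^s(v))\,ds$; you should either add this line or state explicitly that you work under the paper's main hypothesis $\cS\ne\emptyset$. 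Second, for $q<0$ the final step ``hence hyperbolic with $T=1$'' does not follow from the displayed inequality: definition \eqref{eq:T-for-Pdefhyperbolic} with $f=q\varphi^{(\u)}$ and $T=1$ asks for $P_{\rm top}(G,q\varphi^{(\u)})>\max_v\int_0^1 q\varphi^{(\u)}(g^s(v))\,ds=q\min_v\int_0^1\varphi^{(\u)}(g^s(v))\,ds$, which is a different (and in general strictly larger) quantity than $q\max_v\int_0^1\varphi^{(\u)}(g^s(v))\,ds$ when $q<0$. This mismatch is arguably already present in the lemma as stated in the paper, but your proof identifies the two without comment; as written, your argument establishes hyperbolicity at $T=1$ only for $0\le q<1$, and for $q<0$ one should instead invoke the equivalence in Lemma \ref{lem:equi} (hyperbolicity for some $T$, using $P_{\rm top}(G,q\varphi^{(\u)})>\max_\mu\int q\varphi^{(\u)}\,d\mu$), which is all the rest of the paper actually needs.
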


\subsection{Homoclinic relations}

For $v\in T^1M$, let 
\[
	W^\s(G,v)
	\eqdef \{w\in T^1M\colon \limsup_{t\to\infty}\frac1t\log d(g^t(w),g^t(v))<0\}
\]
and let $W^\u(G,v)$ be defined analogously taking $t\to-\infty$. Define also
\[
	W^\cs(G,v)
	\eqdef \bigcup_{t\in\bR}g^t(W^\s(G,v)),\quad
	W^\cu(G,v)
	\eqdef \bigcup_{t\in\bR}g^t(W^\u(G,v)).	
\]

\begin{remark}
\emph{A priori},  for $\ddag\in\{\cs,\cu\}$, the set $W^\ddag(G,v)$ defined above could be degenerate,
that is, reduce for example just to the vector $\{v\}$. It follows from \eqref{eqmanifds} that 
if $v\in \cR_{\rm per}$ then
\[
	\cF^\ddag(v)
	=  W^\ddag(G,v),
	\quad \ddag\in\{\s,\u\}.
\]
In particular, the equality  $\cF^\ddag(v)=W^\ddag(G,v)$ also holds for $\ddag\in\{\cs,\cu\}.$

It is a consequence of Pesin theory that if $\mu$ is a hyperbolic measure, then for $\mu$-almost every $v$
the set $W^\ddag(G,v)$ is an injectively immersed submanifold which is tangent to $F^{\ddag}$ for  
$\ddag\in\{\c,\cs,\u,\cu\}$.
\end{remark}

Two vectors $v,w\in T^1M$ are \emph{homoclinically related} if 
\[
	W^\cs(G,v)\pitchfork W^\cu(G,w)\ne\emptyset
	\quad\text{ and }\quad
	W^\cu(G,v)\pitchfork W^\cs(G,w)\ne\emptyset.
\]
Here $\pitchfork$ stands for transverse intersections, that is, there are $v_1\in W^\cs(G,v)\cap W^\cu(G,w)$ and $v_2\in W^\cu(G,v)\pitchfork W^\cs(G,w)$ such that $T_{v_1}M=T_{v_1}W^\cs(G,v)\oplus T_{v_1}W^\cu(G,w)$ and $T_{v_2}M=T_{v_2}W^\cu(G,v)\oplus T_{v_2}W^\cs(G,w)$.

Let $\cM_{\rm hyp}(G)\subset\cM_{\rm erg}(G)$ denote the set of all hyperbolic ergodic measures. Two hyperbolic ergodic measures $\mu_1,\mu_2$ are \emph{homoclinically related} if $\mu_1$-almost every vector and $\mu_2$-almost every vector are homoclinically related. By \cite[Proposition 10.1]{BuzCroLim:}, this defines an equivalence relation on $\cM_{\rm hyp}(G)$. In the context of geodesic flows on rank 1 surfaces this equivalence relation has a single equivalence class. 

\begin{corollary}\label{corhomrelmeas}
	All hyperbolic ergodic measures are homoclinically related.
\end{corollary}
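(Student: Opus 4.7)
The plan is to take two hyperbolic ergodic measures $\mu_1,\mu_2\in\cM_{\rm hyp}(G)$ and produce, for $\mu_1$-a.e. $v$ and $\mu_2$-a.e. $w$, two transverse intersections $v_1\in W^\cs(G,v)\pitchfork W^\cu(G,w)$ and $v_2\in W^\cu(G,v)\pitchfork W^\cs(G,w)$. First I would observe that hyperbolicity forces $\mu_i(\cR)=1$: at any singular vector $F^\s$ and $F^\u$ coincide, so the Oseledets decomposition cannot have two nonzero exponents of opposite sign. In particular, $\mu_i$-a.e. vector is regular, where by the definition of $\cR$ the three continuous line bundles $F^\s,F^\c,F^\u$ span $T(T^1M)$; equivalently, the 2-dimensional subspaces $F^\cs=F^\c\oplus F^\s$ and $F^\cu=F^\c\oplus F^\u$ are transverse in the 3-dimensional ambient tangent space.

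Next I would identify the Pesin (un)stable sets with the geometric foliation leaves. For $\mu_i$-a.e. $v$, Pesin theory gives injectively immersed $C^1$ submanifolds $W^\s(G,v)$ and $W^\u(G,v)$ tangent to $F^\s_v$ and $F^\u_v$, respectively, which are subsets of the (uniquely integrable) geometric leaves $\cF^\s(v)$ and $\cF^\u(v)$. Conversely, under nonpositive curvature, Jacobi field monotonicity along stable/unstable leaves promotes Pesin's local manifolds to global ones, giving the equalities
\[
W^\ddag(G,v)=\cF^\ddag(v),\qquad \ddag\in\{\s,\u,\cs,\cu\},
\]
for $\mu_i$-a.e. $v$, extending \eqref{eqmanifds} beyond $\cR_{\rm per}$. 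This is the main technical obstacle in the proof; for $v\in\cR_{\rm per}$ the identity is granted by \eqref{eqmanifds}, but extending it to a full measure set requires a genuine Pesin-theoretic argument relying on nonpositive curvature and Poincar\'e recurrence.

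Granting this identification, the remaining step is a soft density-plus-transversality argument. Since $\cF^\s$ is a minimal foliation on $T^1M$ (Section~\ref{secrem:basicfacts}), the leaf $\cF^\s(v)$ accumulates on every point, in particular near $w$. Fix a small product neighborhood of $w$ inside $\cR$ in which $\cF^\cs$ and $\cF^\cu$ provide transverse coordinates (this uses that at every regular point $F^\cs+F^\cu=T(T^1M)$). Any sufficiently close point of $\cF^\s(v)$ to $w$ lies on a local $\cF^\cu$-plaque of some nearby point, and flowing appropriately produces a point $v_1\in\cF^\cs(v)\cap\cF^\cu(w)$; by the transversality of $F^\cs$ and $F^\cu$ at regular points, this intersection is automatically transverse, so $v_1\in W^\cs(G,v)\pitchfork W^\cu(G,w)$. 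Symmetrically, using minimality of $\cF^\u$, I would find $v_2\in W^\cu(G,v)\pitchfork W^\cs(G,w)$.

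Combining these three steps shows that $\mu_1$-a.e. $v$ and $\mu_2$-a.e. $w$ are homoclinically related, so $\mu_1$ and $\mu_2$ lie in the same equivalence class of \cite[Proposition 10.1]{BuzCroLim:}. Since $\mu_1,\mu_2\in\cM_{\rm hyp}(G)$ were arbitrary, $\cM_{\rm hyp}(G)$ consists of a single homoclinic class, as claimed. The delicate step is the identification $W^\ddag(G,v)=\cF^\ddag(v)$ off $\cR_{\rm per}$; the density/transversality portion is a standard rank-one argument.
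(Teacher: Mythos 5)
Your outline is not the paper's argument, and it hinges on a step you yourself flag and then skip: the identification $W^\ddag(G,v)=\cF^\ddag(v)$ for $\mu_i$-almost every $v$, with the \emph{exponential} convergence built into the definition of $W^{\s}$ and $W^{\u}$. This is a genuine gap, not a routine verification. Pesin theory gives, for a.e.\ point of a hyperbolic measure, a local stable manifold, hence an open connected arc of the horocycle leaf $\cF^{\s}(v)$ on which orbits contract exponentially; it does not give that the \emph{entire} leaf is exponentially contracted (points far out on the leaf may spend long times near the singular set $\cS$, where there is no contraction at all). The paper only asserts the identity $\cF^\ddag(v)=\cW^\ddag(G,v)$, and hence $\cF^\ddag(v)=W^\ddag(G,v)$, for $v\in\cR_{\rm per}$, quoting \cite[Proposition 3.10]{BBE-Annals} and \cite[Lemma 4.1]{LimPol:}; it makes no such claim for a.e.\ point of an arbitrary hyperbolic measure. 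Without that identity your density-plus-transversality step produces a point $v_1\in\cF^{\cs}(v)\cap\cF^{\cu}(w)$, but you cannot conclude $v_1\in W^{\cs}(G,v)\cap W^{\cu}(G,w)$, which is what the homoclinic relation requires. (Your first step, $\mu_i(\cR)=1$, and the transversality at regular points are fine.)

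The paper circumvents exactly this difficulty by routing everything through periodic orbits: (a) any two hyperbolic \emph{periodic} measures are homoclinically related --- this is where the minimality of the foliations and the identity \eqref{eqmanifds} are used, and there the leaves are globally identified with the (un)stable sets; (b) by a flow version of Katok's horseshoe approximation theorem \cite[Supplement S.5]{KatHas:95}, recurrent generic points of an arbitrary hyperbolic ergodic measure are shadowed by hyperbolic periodic orbits, which makes the measure homoclinically related to a periodic measure; (c) transitivity of the relation (\cite[Proposition 10.1]{BuzCroLim:}) then yields the claim. If you want to keep your direct approach, you must either prove the global identification of Pesin manifolds with foliation leaves off $\cR_{\rm per}$ (a nontrivial Pesin-theoretic statement in nonpositive curvature that you would need to supply in full), or replace it with the periodic-approximation argument above.
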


\begin{proof}
We only sketch the arguments. First observe that two hyperbolic periodic measures are homoclinically related. This is known to experts in the area and follows from classical results discussed in Section \ref{secrem:basicfacts}. For details, see, for example, \cite[Theorem 1.1]{LimPol:}. 

By a flow version of Katok's horseshoe approximation theorem (see \cite[Supplement S.5]{KatHas:95}), every hyperbolic ergodic measure $\mu$ can be weak$\ast$ approximated by measures supported on hyperbolic periodic orbits. Indeed, in this proof, pseudo-orbits of $\mu$-generic points are shadowed by true orbits. In particular, recurrent $\mu$-generic points can be chosen such that they are shadowed by hyperbolic periodic orbits. By the previous paragraph and the transitivity of the homoclinic relation,
all periodic measures are homoclinically related. This implies the assertion.
\end{proof}

\subsection{Time-preserving symbolic extension and its time-changes}\label{ssectime-preserving-extension}

In this section, we use time-preserving and entropy-preserving symbolic extensions to investigate the geodesic flow and its equilibrium states of the scaled geometric potential.

In order to lift homoclinically related measures to a same irreducible TMF, we use \cite[Main Theorem]{LimPol:}, which we restate in Theorem \ref{thm-coding-irreducible} below in the context of this section.

Recall from Section \ref{sect-MME-symbolic} that, given a TMS $(\Sigma,\sigma)$ and $R\colon\Sigma\to\bR_{>0}$ bounded away from zero and infinity, we define the suspension space $\Sigma_R$ and the suspension flow $\Theta=(\theta^t)_t=\Theta_{\Sigma,R}$. The following result combines \cite[Main Theorem]{LimPol:} with Corollary \ref{corhomrelmeas}.

\begin{theorem}\label{thm-coding-irreducible}
Let $\mu_1,\mu_2$ be two hyperbolic ergodic probability measures. There is an irreducible countable TMF $(\Sigma_R,\sigma_R)$, a subset $\Sigma_R^\#\subset\Sigma_R$, and a H\"older continuous map $\pi_R\colon \Sigma_R\to T^1M$ such that:
\begin{enumerate}[label=\textnormal{(\arabic*)}]
\item $R\colon\Sigma\to\bR^+$ is H\"older continuous and bounded away from zero and infinity,
\item $\pi_R\circ\Theta^t=\varphi^t\circ\pi_R$ for all $t\in\bR$,
\item $\pi_R(\Sigma^\#_R)$ has full measure with respect to $\mu_1$ and $\mu_2$, \label{three}
\item Every $x\in T^1M$ has finitely many pre-images in $\Sigma_R^\#$. \label{four}
\end{enumerate} 
\end{theorem}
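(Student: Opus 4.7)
The plan is to invoke the Main Theorem of \cite{LimPol:} and combine it with Corollary \ref{corhomrelmeas} in the preceding subsection. Recall that Corollary \ref{corhomrelmeas} asserts that any two hyperbolic ergodic probability measures of $G$ are homoclinically related, so in particular $\mu_1$ and $\mu_2$ belong to a single homoclinic equivalence class of measures.

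The construction in \cite{LimPol:} produces, for any prescribed homoclinic class, a countable TMS $(\Sigma,\sigma)$ together with a H\"older continuous roof function $R\colon\Sigma\to\bR_{>0}$, bounded away from $0$ and from $\infty$, such that the associated TMF $\Theta_{\Sigma,R}$ admits a H\"older continuous factor map $\pi_R\colon\Sigma_R\to T^1M$ with $\pi_R\circ\theta^t=g^t\circ\pi_R$ for every $t\in\bR$, and such that $\pi_R$ is finite-to-one above a set $\pi_R(\Sigma_R^\#)$ that carries full measure for every hyperbolic ergodic measure in the prescribed homoclinic class. The key feature underlying \cite{LimPol:}, inherited from Sarig's coding philosophy \cite{Sarig-JAMS}, is that one may further pass to an irreducible component of $(\Sigma,\sigma)$ which still codes all measures homoclinically related to a given one: homoclinically related measures lift to the same irreducible component.

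Following this strategy, I would first apply Corollary \ref{corhomrelmeas} to conclude that $\mu_1$ and $\mu_2$ are homoclinically related. Then \cite{LimPol:} produces an irreducible TMS $(\Sigma,\sigma)$ with a H\"older continuous, bounded-above-and-away-from-zero roof $R$, and a H\"older continuous factor map $\pi_R$ satisfying the semi-conjugacy relation in item (2). Choosing $\Sigma_R^\#$ to be the set provided by \cite{LimPol:}, items \ref{three} and \ref{four} are immediate from the corresponding properties established there: $\mu_1, \mu_2$ both give full mass to $\pi_R(\Sigma_R^\#)$ because both lie in the homoclinic class being coded, and the finiteness of $\pi_R^{-1}(x)\cap\Sigma_R^\#$ is built into the construction.

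The main, and essentially the only, nontrivial point is justifying that a \emph{single} irreducible TMF can code both measures simultaneously; this is precisely where the homoclinic relation enters, allowing one to invoke the existing coding machinery of \cite{LimPol:} without reproving it.
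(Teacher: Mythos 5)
Your proposal is correct and follows exactly the route the paper takes: the paper states this theorem as a direct combination of the Main Theorem of \cite{LimPol:} (which codes an entire homoclinic class of hyperbolic measures by a single irreducible TMF with the stated properties of $R$, $\pi_R$, and $\Sigma_R^\#$) with Corollary \ref{corhomrelmeas}, which guarantees that $\mu_1$ and $\mu_2$ lie in the same homoclinic class. Your identification of the single nontrivial point — that one irreducible component suffices for both measures, via the homoclinic relation — is precisely the content the paper relies on.
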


Note that in \cite{LimPol:}, the subset $\Sigma_R^\#\subset\Sigma_R$ is given by 
\[
	\Sigma_R^\#
	= \big\{(\uv,t)\in\Sigma_R\colon\exists v,w\in V\,\text{so that }
	v_n=v \text{, }v_{-m}=w\text{ for infinitely many } n,m\in\bN\big\},
\]
where $V$ denoted the vertex set of the TMS. By the Poincaré recurrence theorem, every $\Theta$-invariant probability measure is carried by $\Sigma_R^\#$. This, in particular, implies item \ref{three} in the above theorem. Recalling our definition in Section \ref{sect-MME-symbolic}, the following is now an immediate consequence of Theorem \ref{thm-coding-irreducible} together with Corollary \ref{corhomrelmeas}.

\begin{corollary}\label{corrr}
Let $\mu_1,\mu_2$ be two hyperbolic ergodic probability measures. Then there is an irreducible countable TMF $(\Sigma_R,\sigma_R)$ with a roof function $R$ which is H\"older continuous and bounded away from zero and infinity and there is a H\"older continuous map $\pi_R\colon \Sigma_R\to T^1M$ such that $G$ is a finite-to-one factor of $\Theta$ for $\mu_1$ and $\mu_2$ by $\pi_R$.
\end{corollary}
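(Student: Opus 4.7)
The strategy is to assemble Theorem \ref{thm-coding-irreducible} and Corollary \ref{corhomrelmeas}, and then verify that the data they supply fit the definition of a finite-to-one factor given in Section \ref{sect-MME-symbolic}.

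First, since $\mu_1$ and $\mu_2$ are hyperbolic ergodic measures for $G$, Corollary \ref{corhomrelmeas} tells us that they lie in the single homoclinic equivalence class of $\cM_{\rm hyp}(G)$; in particular, $\mu_1$ and $\mu_2$ are homoclinically related. Hence the hypotheses of Theorem \ref{thm-coding-irreducible} are satisfied for the pair $\{\mu_1,\mu_2\}$, producing an irreducible countable TMF $(\Sigma_R,\Theta)$ with a H\"older continuous roof function $R$ bounded away from $0$ and $\infty$, a H\"older continuous map $\pi_R\colon \Sigma_R\to T^1M$ intertwining $\Theta$ with $G$, and a distinguished set $\Sigma_R^\#\subset\Sigma_R$.

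To verify that $G$ is a finite-to-one factor of $\Theta$ for $\mu_1$ and $\mu_2$ by $\pi_R$, set $Y^\#\eqdef\Sigma_R^\#$ and $X^\#\eqdef\pi_R(\Sigma_R^\#)$. The equivariance $\pi_R\circ\theta^t=g^t\circ\pi_R$ is item (2) of Theorem \ref{thm-coding-irreducible}. The fact that any $\Theta$-invariant probability measure gives full mass to $\Sigma_R^\#$ follows, as noted after Theorem \ref{thm-coding-irreducible}, from the definition of $\Sigma_R^\#$ via two-sided recurrence to fixed vertices combined with the Poincar\'e recurrence theorem. Item (3) of Theorem \ref{thm-coding-irreducible} gives $\mu_1(X^\#)=\mu_2(X^\#)=1$, and by construction $X^\#=\pi_R(Y^\#)$. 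Finally, the restriction $\pi_R|_{Y^\#}\colon Y^\#\to X^\#$ is finite-to-one by item (4) of Theorem \ref{thm-coding-irreducible}.

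Strictly speaking, one should check that $Y^\#$ and $X^\#$ are measurable in the appropriate sense: $Y^\#$ is a countable intersection of countable unions of cylinder sets, hence Borel in $\Sigma_R$; the image $X^\#=\pi_R(Y^\#)$ is analytic (as the continuous image of a Borel set in a Polish space), and therefore universally measurable, which is enough for the use made of it in Proposition \ref{prop-finite-to-one-extension}. No step here presents a genuine obstacle: the corollary is essentially a packaging statement, with all the substantial work (the homoclinic-class statement and the construction of the symbolic coding) imported from \cite{LimPol:} and from Corollary \ref{corhomrelmeas}.
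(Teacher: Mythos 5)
Your proposal is correct and follows exactly the route the paper intends: the paper states the corollary as an immediate consequence of Theorem \ref{thm-coding-irreducible} (which already incorporates Corollary \ref{corhomrelmeas}) together with the observation, made just before the corollary, that Poincar\'e recurrence gives every $\Theta$-invariant measure full mass on $\Sigma_R^\#$. Your explicit verification of the three conditions in the definition of a finite-to-one factor, with $Y^\#=\Sigma_R^\#$ and $X^\#=\pi_R(\Sigma_R^\#)$, together with the measurability remark, simply spells out what the paper leaves implicit.
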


We can now give the

\begin{proof}[Proof of Theorem \ref{theoremgeodesic} using approach \ref{aa1}]
Fix $q<1$ and let $f=q\varphi^{(\u)}$. By Lemma \ref{lem:11}, $f$ is hyperbolic.
Since $G$ is a $C^\infty$ flow, the existence of an equilibrium state follows from \cite{Newhouse-Entropy}.
We prove uniqueness. To reach a contradiction, assume that there are two distinct ergodic equilibrium states $\mu_1,\mu_2$ for $f$. Since $f$ is hyperbolic, $\mu_1,\mu_2$ are hyperbolic. By Corollary \ref{corrr}, there is an irreducible countable TMF $(\Sigma_R,\sigma_R)$ with a roof function $R$ which is H\"older continuous and bounded away from zero and infinity and there is a H\"older continuous map $\pi_R\colon \Sigma_R\to T^1M$ such that $G$ is a finite-to-one factor of $\Theta_{\Sigma,R}$ for $\mu_1$ and $\mu_2$ by $\pi_R$.

Applying Proposition \ref{prop-finite-to-one-extension} to $\mu_1,\mu_2$, there exist equilibrium states $\nu_1,\nu_2$ for $f\circ\pi_R$ that satisfy $(\pi_R)_\ast\nu_i=\mu_i$, $i=1,2$.
Although it is unknown whether $f$ is Hölder continuous, by \cite[Claim in the proof of Theorem 1.2]{LimPol:} its lift $f\circ\pi_R$ is Hölder continuous with respect to the Bowen-Walters metric 
considered in Section \ref{sect-MME-symbolic}.
Applying Theorem \ref{thepro:2} and Lemma \ref{lemtimTMF}, we obtain a time-changed TMF $\Theta_{\Sigma,R'}$ of $\Theta$ such that $R'$ is Hölder continuous, bounded away from zero and infinity and for which $\nu_1,\nu_2$ are both measures of maximal entropy. Since $\Sigma$ is irreducible, this contradicts Corollary \ref{corTMFmme}. The proof is complete.
\end{proof}

\subsection{Time-preserving factor and its time-changes}\label{sect-expansive-factor}

In this section, we investigate the geodesic flow and, in particular, equilibrium states of its scaled geometric potential, by considering time-preserving and entropy-preserving topological factors.

Denote by $\cM_{\rm erg}^+(G)\subset\cM_{\rm erg}(G)$ the set of all ergodic measures with positive entropy. 
As observed in \cite[Section 6]{GelRug:19}, for every $\mu\in\cM_{\rm erg}^+(G)$, for $\mu$-almost every $v$, it holds $\pi^{-1}(\pi(v))=\{v\}$ and hence $h_{\rm top}(g^1,\pi^{-1}(\pi(v)))=0$. The measure of maximal entropy for $G$ has positive entropy and hence it belongs to $\cM_{\rm erg}^+(G)$. We get the following.

\begin{proposition}[{\cite[Theorem A and Section 6]{GelRug:19}%
\footnote{In fact, in~\cite{GelRug:19} a more general setting of a surface without focal points is considered.}}] \label{pro:mmmain}
	There is a continuous flow $\Psi\colon X\times\bR \to X$ of a $3$-manifold $X$ that is a time- and entropy-preserving topological factor of $G$ on $\cM_{\rm erg}^+(G)$. Moreover, $\Psi$ is topologically mixing, expansive, and has a local product structure.
\end{proposition}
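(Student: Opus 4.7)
The plan is to construct $X$ as a quotient of $T^1M$ obtained by collapsing each flat strip to a single orbit, show that the quotient inherits the structure of a $3$-manifold on which the geodesic flow descends to a continuous flow $\Psi$, and then verify the three dynamical properties in turn. Concretely, declare $v\sim w$ whenever $v$ and $w$ determine complete geodesics that are bi-asymptotic, so together they bound a flat strip; since flat strips are totally geodesic and the geodesic flow acts by translation along each horizontal fiber of the strip, the relation is $G$-invariant and the induced map $\psi^t([v])\eqdef[g^t(v)]$ is well-defined. The first delicate step is to endow $X\eqdef T^1M/\sim$ with a smooth $3$-manifold structure and a compatible metric so that $\pi\colon T^1M\to X$ is continuous and surjective; one verifies that $\sim$ has closed graph, that the equivalence classes are compact intervals transverse to $G$-orbits, and that the Hausdorff quotient is locally parametrized by three of the four ``geometric'' coordinates (flow direction plus strong stable and strong unstable leaves). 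Continuity of $\Psi$ then follows from continuity of $G$ and of the quotient map.

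Next, to see that $\Psi$ is an entropy-preserving factor on $\cM_{\rm erg}^+(G)$, fix $\mu\in\cM_{\rm erg}^+(G)$. By the Ruelle inequality, any measure with positive metric entropy is hyperbolic in the sense of Pesin and therefore assigns zero mass to $\cS$ and, more importantly, to the union of all nontrivial flat strips (each such strip carries only zero-entropy measures supported on parabolic-like dynamics). Hence for $\mu$-a.e.\ $v$ the fiber $\pi^{-1}(\pi(v))=\{v\}$, which immediately gives property \ref{c:B} and implies that $h_{\rm top}(g^1,\pi^{-1}(\pi(v)))=0$, yielding \ref{c:A}. Time-preservation $\pi\circ g^t=\psi^t\circ\pi$ holds by construction.

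For the three dynamical properties: expansivity of $\Psi$ is the main payoff of the quotient, since flat strips are the only obstruction to expansivity of $G$ (by Eberlein's characterization); two orbits of $G$ that remain $\delta$-close for all time must lie in a common flat strip, and after collapsing they are identified in $X$, so $\Psi$ satisfies the Bowen--Walters expansivity condition. Topological mixing passes from $G$ to its factor $\Psi$: recall from Section~\ref{secrem:basicfacts} that the strong stable and strong unstable foliations are minimal, hence $G$ is topologically mixing, and factors of topologically mixing flows are topologically mixing. For the local product structure on $\Psi$, note that the images of the leaves $\cF^\s(v)$ and $\cF^\u(v)$ under $\pi$ define continuous one-dimensional foliations $\widetilde\cF^\s$ and $\widetilde\cF^\u$ on $X$ which, together with the $\Psi$-flow direction, locally coordinatize $X$. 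Given $x,y\in X$ nearby, one lifts, uses that the bi-asymptotic ambiguity of $G$ has been quotiented out, and applies the standard transversality to produce the unique bracket $[x,y]\in\cW^\s_\varepsilon(\Psi,\psi^\tau(x))\cap\cW^\u_\varepsilon(\Psi,y)$.

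The main obstacle I anticipate is the very first step: verifying that $X$ is a genuine $3$-manifold and that $\pi$ is continuous with respect to reasonable metric structures. The equivalence classes can vary in width and do not have uniformly bounded cardinality of boundary, so one must check carefully that their collapse yields a Hausdorff quotient with locally Euclidean structure rather than pathological identifications. Once this is in place, the remaining items (expansivity, mixing, product structure) follow by translating the corresponding geometric statements for $G$, since they have been proved exactly modulo the flat-strip ambiguity that the quotient resolves.
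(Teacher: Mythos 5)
The paper does not actually prove this proposition: it is imported verbatim from \cite{GelRug:19} (Theorem A and Section 6 there), so there is no in-paper argument to compare yours against. Your sketch follows exactly the strategy of that reference, which the introduction describes as ``collapsing each flat strip to a single orbit,'' so the architecture is the right one. Two points, however, are stated in a way that would not survive being written out. First, your equivalence relation as initially defined (``$v\sim w$ whenever the geodesics are bi-asymptotic'') identifies each flat strip to a single \emph{point}, since $g^t(w)$ determines the same geodesic as $w$; the classes must be the transverse cross-sections of the strips (which you do say later), and making that synchronization precise is exactly what makes the factor time-preserving. Second, your expansivity argument (``two orbits of $G$ that remain $\delta$-close for all time must lie in a common flat strip'') is about $G$-orbits in $T^1M$, whereas expansivity of $\Psi$ concerns $\Psi$-orbits in $X$: closeness of $\pi(v)$ and $\pi(w)$ in $X$ only controls the distance between the $G$-orbits of $v$ and $w$ up to the widths of the collapsed strips, which are bounded but not small. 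The correct route is to lift to the universal cover and invoke the flat strip theorem: two geodesics staying within \emph{bounded} Hausdorff distance for all time in nonpositive curvature are bi-asymptotic and bound a flat strip, hence are identified in $X$. As written, your ``small distance'' version of the claim is not what you can extract from the hypothesis.

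The remaining soft spot is the one you flag yourself: that the quotient is a $3$-manifold (in \cite{GelRug:19} this rests on the classes forming an upper semicontinuous decomposition into arcs, so that a Moore-type theorem applies, together with the closedness of the bi-asymptoty relation), and the local product structure, which in the reference requires genuine work with horospheres rather than the one-line transversality you invoke. Your entropy-preservation argument is fine: positive entropy forces hyperbolicity via Ruelle's inequality, hyperbolic ergodic measures give zero mass to $\cS$, and all nontrivial fibers of $\pi$ sit inside $\cS$, which yields both conditions \ref{c:A} and \ref{c:B}; this matches the remark in Section \ref{sect-expansive-factor}. Likewise, mixing of $G$ (from minimality of the foliations) does pass to the time-preserving factor. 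In short: right strategy, correct easy parts, but the hard parts of the cited theorem are asserted rather than proved, and the expansivity step needs the bounded-distance flat strip theorem rather than the small-distance statement you wrote.
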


To proceed, we fix a parameter $q<1$ and consider the time-change $G_{ r}$ of $G$ by the continuous function $ r\colon T^1M\to (0,\infty)$ given by%
\footnote{This time-change depends on the parameter $q$, but we will not indicate this dependence in our notation.}  
\begin{equation}\label{fix:R}
	 r
	\eqdef P_{\rm top}(G,q\varphi^{(\u)})-q\max_{v\in T^1M} \int_0^1\varphi^{(\u)}(g^s(v))\,ds,
\end{equation}
Since $\pi$ preserves the time-parametrization, the time-changed flow $G_{ r}$ is an extension of a time-change $\Psi_{\widetilde r}$ of the factor flow $\Psi$ of $G$, where
\begin{equation}\label{deftilder}
	\widetilde r
	=  r\circ\pi^{-1}\colon X\to\bR_{>0},
	\quad
	\text{ with $ r$ as in \eqref{fix:R}}
\end{equation}
Note that $\varphi^{(\u)}(v)=0$ for every $v\in\cS$ and that the set $\pi^{-1}(\pi(v))$ may contain more than one point only if $v\in\cS$. Hence $\widetilde r$ is compatible with the time-preserving topological factor provided by Proposition \ref{pro:mmmain}, that is, the diagram given in \eqref{eqdiagram} commutes. 

\begin{corollary}\label{coriff}
	The potential $q\varphi^{(\u)}$ has a unique equilibrium state (with respect to $G$) if, and only if, $G_{ r}$ has a unique measure of maximal entropy.
\end{corollary}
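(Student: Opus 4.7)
The plan is to apply Theorem \ref{thepro:2} directly to the flow $\Phi=G$ and the potential $f=q\varphi^{(\u)}$, with the choice $t=1$ in \eqref{eq:T-for-Pnew}. The key point is that the hyperbolicity condition \eqref{eq:T-for-Pnew} holds precisely because $q<1$.

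More concretely, by Lemma \ref{lem:11} we have
\[
  P_{\rm top}(G,q\varphi^{(\u)}) - q\max_{v\in T^1M}\int_0^1\varphi^{(\u)}(g^s(v))\,ds > 0,
\]
which is exactly the statement that $q\varphi^{(\u)}$ is hyperbolic (in the sense of \eqref{eq:T-for-Pdefhyperbolic}) with $T=1$. Consequently the positive continuous function $r$ defined as in \eqref{fix:R} (interpreted pointwise, as in Corollary \ref{corexpansive}) coincides with the time-change produced in \eqref{eq:T-for-Pnew} for $f=q\varphi^{(\u)}$ and $t=1$, so Theorem \ref{thepro:2} applies to the pair $(G,q\varphi^{(\u)})$.

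Theorem \ref{thepro:2} then yields that the correspondence $\mu\mapsto\mu_r$, given by \eqref{eq:measuretime}, is a bijection between $\cM_{\rm erg}(G)$ and $\cM_{\rm erg}(G_r)$ which maps ergodic equilibrium states of $q\varphi^{(\u)}$ (with respect to $G$) onto ergodic measures of maximal entropy of $G_r$. Since any equilibrium state is a convex combination of ergodic equilibrium states (and similarly for measures of maximal entropy), it follows that $q\varphi^{(\u)}$ has a unique equilibrium state with respect to $G$ if and only if $G_r$ has a unique measure of maximal entropy. There is essentially no obstacle here beyond verifying that Lemma \ref{lem:11} provides the hyperbolicity needed to invoke Theorem \ref{thepro:2}; the rest is a direct translation.
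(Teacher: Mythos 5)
Your proof is correct and follows exactly the paper's route: the paper's own proof of Corollary \ref{coriff} is the one-line observation that it follows from Theorem \ref{thepro:2} together with Lemma \ref{lem:11}, which is precisely what you do (with the extra, harmless elaboration on the ergodic decomposition and the pointwise reading of $r$).
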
 

\begin{proof}
This is a consequence of Theorem \ref{thepro:2} together with Lemma \ref{lem:11}.
\end{proof}

Note that the regular set $\cR$ satisfies $\cR\subset\cR_0\eqdef\{v\in T^1M\colon\pi^{-1}(\pi(v))=\{v\}\}$. Recall also that the singular set $\cS=T^1M\setminus\cR$ is a closed set that contains only vectors tangent to a geodesic lying  entirely in a set of zero curvature. Note that $\cR_0$ and its complementary set $\cS\setminus\cR_0$ both are $G$-invariant and hence $G_{ r}$-invariant. The projection of these sets by $\pi$ to $X$ are  $\Psi$-invariant and thus $\Psi_{\widetilde r}$-invariant. 

\begin{proposition}\label{proGrtopolofactor}
	Let
\[
	\cN_{ r}
	\eqdef\{\mu\in\cM_{\rm erg}(G_{ r})\colon \mu(\cS)=0\}.
\]	
The flow $\Psi_{\widetilde r}$ is an entropy-preserving topological factor of $G_{ r}$ on $\cN_{ r}$. That is, for every $\mu\in\cN_r$ the entropy of the push-forward of $\mu$ with respect to $G_{r}$ is the same as entropy of $\mu$ with respect to $G_r$.
\end{proposition}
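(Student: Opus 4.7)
The plan is to verify the two defining conditions of an entropy-preserving topological factor (Section~\ref{secmeamaxent}) using the natural map $\pi\colon T^1M\to X$ viewed now between the time-changed flows $G_r$ and $\Psi_{\widetilde r}$, and then to derive the displayed entropy equality exactly as in Claim~\ref{cla:parrotc} via the Ledrappier--Walters formula.

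I would begin by confirming that $\pi$ (the factor map of Proposition~\ref{pro:mmmain}) itself serves as a time-preserving topological factor map from $G_r$ onto $\Psi_{\widetilde r}$. The key observation is that $r$ is constant along each fiber of $\pi$: fibers at regular points are singletons, while for $v\in\cS$ the entire fiber $\pi^{-1}(\pi(v))$ lies in $\cS$ (since $\cS$ is $\pi$-saturated), and there $\varphi^{(\u)}\equiv 0$ forces $r$ to take the constant value $P_{\rm top}(G,q\varphi^{(\u)})$. Hence $\widetilde r\eqdef r\circ\pi^{-1}$ is a well-defined continuous positive function on $X$. Using $\pi\circ g^s=\psi^s\circ\pi$ and the defining relation \eqref{eq:elldefori} for the time-reparametrization $\ell$, a direct computation gives $\pi\circ g_r^t=\psi_{\widetilde r}^t\circ\pi$ for every $t\in\bR$.

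Next I would verify conditions (1) and (2) for an arbitrary $\mu\in\cN_r$. Since $\mu(\cS)=0$ and $\cR=T^1M\setminus\cS\subset\cR_0$, we have $\mu(\cR_0)=1$. For $\mu$-a.e.~$v$ the fiber $\pi^{-1}(\pi(v))=\{v\}$ is a singleton, whose topological entropy (in Bowen's sense, under any map) is trivially zero, yielding condition~(1). The set $\pi(\cR_0)$ is Borel by the same argument as in Claim~\ref{cla:parrot}, and
\[
\pi_*\mu(\pi(\cR_0))=\mu(\pi^{-1}(\pi(\cR_0)))\ge\mu(\cR_0)=1,
\]
giving condition~(2).

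Finally, to obtain the claimed entropy equality I would invoke Ledrappier--Walters \cite{LedWal:77} to write
\[
\sup_{\mu'\colon\pi_*\mu'=\pi_*\mu}h_{\mu'}(g_r^1)=h_{\pi_*\mu}(\psi_{\widetilde r}^1)+\int h_{\rm top}(g_r^1,\pi^{-1}(x))\,d(\pi_*\mu)(x);
\]
by condition~(1) the integrand vanishes $\pi_*\mu$-almost everywhere, and combined with the automatic reverse inequality $h_\mu(g_r^1)\ge h_{\pi_*\mu}(\psi_{\widetilde r}^1)$ this forces $h_\mu(G_r)=h_{\pi_*\mu}(\Psi_{\widetilde r})$. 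I do not foresee any serious obstacle: the entire argument reduces to the observation that $\pi$ is injective on the full-measure set $\cR_0$ for every $\mu\in\cN_r$. The only mildly delicate points—well-definedness of $\widetilde r$ and Borel measurability of $\pi(\cR_0)$—are both resolved by the $\pi$-saturation of $\cS$ together with the vanishing of $\varphi^{(\u)}$ on $\cS$.
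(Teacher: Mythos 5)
Your proof is correct and follows the same overall skeleton as the paper's --- verify conditions (1) and (2) of the definition in Section \ref{secmeamaxent} for the map $\pi$ viewed between $G_r$ and $\Psi_{\widetilde r}$ --- but both of your sub-arguments are genuinely more elementary. For condition (1), the paper proves the stronger statement that $h_{\rm top}(g_r^1,\pi^{-1}(\pi(v)))=0$ for \emph{every} $v\in T^1M$, which requires invoking \cite[Lemma 6.9]{GelRug:19} on the nontrivial (flat-strip) fibers together with the observation that the time-change restricted to such a fiber is multiplication by the constant $P_{\rm top}(G,q\varphi^{(\u)})$; you prove only the $\mu$-a.e.\ statement via singleton fibers, which is all the definition demands and needs nothing beyond $\cR\subset\cR_0$ and $\mu(\cS)=0$. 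For condition (2), the paper argues by contradiction through an entropy estimate on $\pi(\cS)$ (via the argument of \cite[Lemma 6.11]{GelRug:19} and \eqref{eqvpHnhup}), whereas you compute $\pi_\ast\mu(\pi(\cR_0))=\mu(\pi^{-1}(\pi(\cR_0)))=\mu(\cR_0)=1$ directly from the saturation $\pi^{-1}(\pi(\cR_0))=\cR_0$. Your route buys simplicity and self-containedness; the paper's buys the explicitly flagged stronger everywhere version of (i). You additionally spell out the well-definedness of $\widetilde r$ on fibers and the Ledrappier--Walters step yielding $h_\mu(G_r)=h_{\pi_\ast\mu}(\Psi_{\widetilde r})$, both of which the paper handles outside this proof (in the discussion around \eqref{deftilder} and in Claim \ref{cla:parrotc}, respectively); there is no gap in either treatment.
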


\begin{proof}
We will prove a slightly stronger statement: that for every $\mu \in \cN_{ r}$,
\begin{enumerate}[label=\normalfont{(\roman*)}]
	\item \label{cd:i} $h_{\rm top}(g_{ r}^1,\pi^{-1}(\pi(v)))=0$ for every (and not just $\mu$-almost every) $v\in T^1M$;
	\item \label{cd:ii} $\pi_\ast\mu(\{\pi(v)\colon \pi^{-1}(\pi(v))=\{v\}\})=1$.
\end{enumerate}
If $\pi^{-1}(\pi(v))=\{v\}$ then \ref{cd:i} easily follows. Otherwise, if $\pi^{-1}(\pi(v))$ contains a point $w\ne v$, then given any lift $\tilde v$ of $v$ and $\tilde w$ of $w$ to the universal cover $\tilde M$, the geodesics $\gamma_{\tilde v}$ and $\gamma_{\tilde w}$ are bi-asymptotic and hence bound a flat strip. In particular, $\varphi^{(\u)}\equiv0$ and the sectional curvature vanishes along the orbits of $v$ and $w$. In other words, the time-change on the set $\pi^{-1}(\pi(v))$ is just a  multiplication of the speed of the vector field by the constant
\[
	r_0
	\eqdef P_{\rm top}(G,q\varphi^{(\u)}).
\]
By \cite[Lemma 6.9]{GelRug:19}, it holds $h_{\rm top}(g^1,\pi^{-1}(\pi(v)))=0$. Thus, together with the previous argument $h_{\rm top}(g_{ r}^1,\pi^{-1}(\pi(v)))=0$, proving claim \ref{cd:i} in this other case.

Let $\nu=\pi_\ast\mu$. By contradiction, suppose that $\nu(\pi(\cR_0))<1$ and hence, by $\Psi_{\widetilde r}$-ergodicity $\nu(\pi(\cS\setminus\cR_0))=1$. Hence, using \eqref{sua} and considering the $\Psi$-ergodic measure $\nu_{1/\widetilde r}$ associated to $\nu$, it follows $\nu_{1/\widetilde r}(\pi(\cS\setminus\cR_0))=1$. As in the proof of \cite[Lemma 6.11]{GelRug:19}, it follows $h_{\rm top}(\psi^1,\pi(\cS))>0$, which leads to a contradiction with \eqref{eqvpHnhup}. This verifies \ref{cd:ii}.
\end{proof}

The following is an immediate consequence of Proposition \ref{pro:mmmain} and Theorem \ref{thmprolem:pseudoorbittracing}.

\begin{corollary}\label{cor:firsts}
	The flow $\Psi$ has the pseudo-orbit-tracing property and the closing property.
\end{corollary}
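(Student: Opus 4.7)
The plan is to deduce Corollary \ref{cor:firsts} as a direct application of Theorem \ref{thmprolem:pseudoorbittracing} to the factor flow $\Psi$, once we verify that $\Psi$ satisfies all the hypotheses of that theorem.

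First, I would check the hypotheses. Proposition \ref{pro:mmmain} asserts that $X$ is a compact $3$-manifold (in particular a compact metric space), and that $\Psi$ is a continuous flow which is expansive and has local product structure. It remains only to note that $\Psi$ is fixed-point free: this follows because $\Psi$ is a time-preserving factor of $G$ through $\pi\colon T^1M\to X$, i.e.\ $\pi\circ g^t=\psi^t\circ\pi$ for every $t\in\bR$. If $\psi^t(x)=x$ held for some $x\in X$ and some $t\neq 0$, then for any $v\in\pi^{-1}(x)$ the orbit $(g^s(v))_{s\in[0,t]}$ would project into the singleton $\{x\}$; since $G$ is fixed-point free this orbit is a nondegenerate arc, and by the argument used in the proof of Proposition \ref{proGrtopolofactor} (via \cite[Lemma 6.9]{GelRug:19}) the fibre $\pi^{-1}(x)$ would have positive topological entropy, contradicting expansivity of $\Psi$. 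Thus $\Psi$ is fixed-point free.

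Once these hypotheses are in place, Theorem \ref{thmprolem:pseudoorbittracing} immediately yields that $\Psi$ has the pseudo-orbit-tracing property and the orbit closing property, which is precisely the conclusion of Corollary \ref{cor:firsts}.

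There is no real obstacle here: the content has been done in Theorem \ref{thmprolem:pseudoorbittracing}, and Proposition \ref{pro:mmmain} provides exactly the topological-dynamical input (expansivity plus local product structure on a compact metric space) needed to invoke it. The only minor point requiring a sentence of justification is that $\Psi$ inherits the fixed-point-free property from $G$, which follows from the time-preserving factor relation $\pi\circ g^t=\psi^t\circ\pi$.
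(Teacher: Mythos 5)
Your proposal matches the paper's proof, which simply states that the corollary is an immediate consequence of Proposition \ref{pro:mmmain} and Theorem \ref{thmprolem:pseudoorbittracing}. One quibble: your justification that $\Psi$ is fixed-point free is garbled --- \cite[Lemma 6.9]{GelRug:19} gives that fibres have \emph{zero} entropy, so you cannot use it to derive positive fibre entropy, and in any case a point with $\psi^t(x)=x$ for some $t\neq0$ is merely periodic, not fixed; the fixed-point-freeness of $\Psi$ is really part of the construction in \cite{GelRug:19} (fibres of $\pi$ are transverse cross-sections of flat strips, hence never $G$-invariant), which the paper takes for granted.
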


\begin{corollary} 
	For every $q<1$, the time-changed flow  $\Psi_{\widetilde r}$,  for $\widetilde r$ satisfying \eqref{deftilder}, is a continuous fixed-point free flow which is topologically transitive, expansive, and has the pseudo-orbit-tracing property.  
\end{corollary}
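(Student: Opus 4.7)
My plan is to deduce each claimed property of $\Psi_{\widetilde r}$ from the corresponding property of $\Psi$, using the time-change invariance lemmas collected in Section \ref{secinvariants}. Since $\Psi$ is already known to be continuous, fixed-point free, topologically mixing, expansive, and to enjoy the pseudo-orbit-tracing property (by Proposition \ref{pro:mmmain} together with Corollary \ref{cor:firsts}), the corollary reduces almost entirely to bookkeeping.

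The first step will be to verify that $\widetilde r\colon X\to\bR_{>0}$ given by \eqref{deftilder} is a well-defined continuous positive function, the only issue being that $\pi\colon T^1M\to X$ is not injective. As already observed in the paragraph preceding the corollary, $\pi^{-1}(\pi(v))$ can contain more than one point only if $v\in\cS$, and on $\cS$ the potential $\varphi^{(\u)}$ vanishes identically; hence the function $r$ from \eqref{fix:R} is constant along each fiber of $\pi$, so $\widetilde r = r\circ\pi^{-1}$ is unambiguously defined. Continuity follows from continuity of $r$ and the quotient topology on $X$ induced by the continuous surjection $\pi$ on a compact space, and positivity is Lemma \ref{lem:11}. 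With $\widetilde r$ in hand, continuity of $\Psi_{\widetilde r}$ as a flow is immediate from the definition of a time-change \eqref{eq:elldef}, and fixed-point freeness is inherited from $\Psi$ because a time-change preserves orbits together with their orientation.

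The three remaining dynamical properties will then follow by direct appeal to Section \ref{secinvariants}. Topological transitivity transfers from $\Psi$ (which is topologically mixing, hence transitive, by Proposition \ref{pro:mmmain}) by the observation in Section \ref{secinvariants} that topological transitivity is invariant under time-changes. Expansivity transfers from $\Psi$ (Proposition \ref{pro:mmmain}) by Lemma \ref{lempro:3}, after replacing the metric on $X$ by the equivalent metric $d_R$ there. The pseudo-orbit-tracing property transfers from $\Psi$ (Corollary \ref{cor:firsts}) by Lemma \ref{lempro:4}. I do not anticipate any genuine obstacle here: the only mildly subtle point is the well-definedness of $\widetilde r$ when $\pi$ is not injective, and that has already been handled by the vanishing of $\varphi^{(\u)}$ on $\cS$ noted in the setup of the corollary.
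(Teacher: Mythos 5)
Your argument is correct and takes essentially the same route as the paper: the paper's proof likewise deduces transitivity and expansivity from their invariance under time-changes (Section \ref{secinvariants}, Lemma \ref{lempro:3}) and obtains the pseudo-orbit-tracing property from Corollary \ref{cor:firsts} together with Lemma \ref{lempro:4}. The well-definedness, continuity, and positivity of $\widetilde r$ that you check explicitly are handled in the paper in the discussion preceding Corollary \ref{coriff}, so your additional care there only makes the argument more self-contained.
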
	

\begin{proof}
	Time-change preserves transitivity and expansivity. Hence, the factor flow $\Psi_{\widetilde r}$ is also transitive and expansive. Corollary \ref{cor:firsts} together with Lemma \ref{lempro:4} implies that $\Psi_{\widetilde r}$ has the pseudo-orbit-tracing property.
\end{proof}

\begin{proof}[Proof of Corollary \ref{corexpansive}]
	Let $q<1$. By Theorem \ref{theoremgeodesic}, $q\varphi^{(\u)}$ has a unique equilibrium state (with respect to $G$). By Corollary \ref{coriff}, there is a unique measure of entropy with respect to $G_r$. This measure belongs to the set $\cN_r\subset\cM_{\rm erg}(\Psi_{\widetilde r})$ defined in Proposition \ref{proGrtopolofactor}.  By Proposition \ref{proGrtopolofactor}, the factor $\pi\colon T^1M\to X$ is time- and entropy-preserving on $\cN_r$. 
 We now apply Proposition \ref{procorprothe:2} to the flow $G_r$, the factor map $\pi$, and the set $\cN_r$.	
\end{proof}

\bibliographystyle{alpha}

\end{document}